\theoremstyle{plain}
  \newtheorem{thm}{Theorem}[section]
  \newtheorem{lem}[thm]{Lemma}
  \newtheorem{cor}[thm]{Corollary}
  \newtheorem{prop}[thm]{Proposition}
\theoremstyle{definition}
  \newtheorem{defn}[thm]{Definition}
  \newtheorem{asmp}[thm]{Assumption}
\theoremstyle{remark}
  \newtheorem{rem}[thm]{Remark}
  \newtheorem*{ack}{Acknowledgment}
\numberwithin{equation}{section}
\DeclareMathOperator{\vol}{vol}
\DeclareMathOperator{\diam}{diam}
\DeclareMathOperator{\supp}{supp}
\DeclareMathOperator{\ObsVar}{ObsVar}
\DeclareMathOperator{\Var}{Var}
\DeclareMathOperator{\Sep}{Sep}
\DeclareMathOperator{\Ric}{Ric}
\DeclareMathOperator{\Hess}{Hess}
\DeclareMathOperator{\Image}{Im}
\DeclareMathOperator{\IC}{IC}
\DeclareMathOperator{\ICL}{ICL}
\DeclareMathOperator{\RCD}{RCD}
\newcommand{\Leb}[1]{\mathcal L^1|_{#1}}
\newcommand{\cL}{\mathcal{L}}
\newcommand{\cV}{\mathcal{V}}
\newcommand{\field}[1]{\mathbb{#1}}
\newcommand{\R}{\field{R}}
\begin{document}

\title[Isoperimetric rigidity]
{Isoperimetric rigidity\\ and distributions of $1$-Lipschitz functions}

\thanks{The second named author is partially supported by a Grant-in-Aid
for Scientific Research from the Japan Society for the Promotion of Science}

\begin{abstract}
We prove that if a geodesic metric measure space satisfies
a comparison condition for isoperimetric profile
and if the observable variance is maximal, then the space is foliated by
minimal geodesics, where the observable variance is defined
to be the supremum of the variance of $1$-Lipschitz functions on the space.
Our result can be considered as a variant of Cheeger-Gromoll's splitting theorem
and also of Cheng's maximal diameter theorem.
As an application, we obtain a new isometric splitting theorem for a
complete weighted Riemannian manifold with a positive
Bakry-\'Emery Ricci curvature.
\end{abstract}

\author{Hiroki Nakajima}
\author{Takashi Shioya}

\address{Mathematical Institute, Tohoku University, Sendai 980-8578,
  JAPAN}
\email{hiroki.nakajima.s4@dc.tohoku.ac.jp}
\email{shioya@math.tohoku.ac.jp}

\date{\today}

\keywords{isoperimetric profile, metric measure space, concentration of measure, observable variance, Lipschitz function}

\maketitle

\section{Introduction}
\label{sec:intro}

A rigidity theorem in Riemannian geometry claims that
if a space is as large (in suitable sense) as a model space defined by a lower bound of
curvature of the space, then the structure of the space is determined.
For instance, Cheng's maximal diameter theorem \cite{Ch:eigen-comp} and
Cheeger-Gromoll's splitting theorem \cite{CG:splitting}
are two of the most celebrated rigidity theorems.
Recently, there are several works done for comparison of isoperimetric profile
under a lower Ricci curvature bound, i.e.,
if the Ricci curvature is bounded below for a complete Riemannian manifold,
or more generally if the Riemannian curvature-dimension condition due to
Ambrosio-Gigli-Savar\'e \cite{AGS:RiemRic} for a metric measure space is satisfied,
then the isoperimetric profile of the space is greater than or equal to
that of a model space
\cite{Gmv:green, BkLd:LG-isop, EMlm:sharp, AbMd:Gaussian-isop, CvMd:sharp-isop}.
In this paper, we prove a rigidity theorem for a metric measure space
under a comparison condition of isoperimetric profile
instead of the lower boundedness of Ricci curvature.
Since the comparison condition of isoperimetric profile
is much weaker than the lower boundedness of Ricci curvature,
we are not able to expect the same result as the maximal diameter theorem
nor the splitting theorem.
We introduce the \emph{observable variance} of the space,
which is a quantity to measure the largeness of a metric measure space.
We prove that, under the comparison condition of isoperimetric profile,
the observable variance has a certain upper bound,
and that, if it is maximal, then
we obtain a foliation structure by minimal geodesics of the space.
As an application, we obtain an isometric splitting theorem
for a complete weighted Riemannian manifold with
a positive Bakry-\'Emery Ricci curvature.

Throughout this paper, a \emph{metric measure space} $X$, or an \emph{mm-space} for short,
is a space equipped
with a complete separable metric $d_X$ and a Borel probability measure $\mu_X$.
Let $X$ be an mm-space.  The \emph{boundary measure} of a Borel set $A \subset X$ is
defined to be
\[
\mu_X^+(A) := \limsup_{\varepsilon \to 0+} \frac{\mu_X(U_\varepsilon(A)) - \mu_X(A)}
{\varepsilon},
\]
where $U_\varepsilon(A)$ denotes the open $\varepsilon$-neighborhood of $A$.
Denote by $\Image \mu_X$ the set of $\mu_X(A)$ for all Borel sets $A \subset X$.
The \emph{isoperimetric profile} $I_X : \Image \mu_X \to [\,0,+\infty\,)$ of $X$
is defined by
\[
I_X(v) := \inf\{\;\mu_X^+(A) \mid A \subset X \ \text{: Borel}, \ \mu_X(A) = v\;\}
\]
for $v \in \Image\mu_X$.
\begin{defn}[Isoperimetric comparison condition]
  We say that $X$ satisfies the \emph{isoperimetric comparison condition} $\IC(\nu)$
  for a Borel probability measure $\nu$ on $\R$ if
   \[
   I_X \circ V \ge V' \quad\text{$\mathcal L^1$-a.e. on $V^{-1}(\Image\mu_X)$},
   \]
   where $V$ denotes the cumulative distribution function of $\nu$
   and $\cL^1$ the one-dimensional Lebesgue measure on $\R$.
\end{defn}

In the case where $\nu$ and $\cL^1$
are absolutely continuous with each other,
$\IC(\nu)$ is equivalent to 
\begin{equation} \label{eq:IC-abc}
I_X \ge V' \circ V^{-1} \quad\text{$\mathcal L^1$-a.e. on $\Image\mu_X$},
\end{equation}
where $V' \circ V^{-1}$ coincides with the isoperimetric profile of $(\R,\nu)$
restricted to sets $A = (\,-\infty,a\,]$.
\eqref{eq:IC-abc} was formerly considered in \cite{Ld:conc,EMlm:sharp}.

Let $\lambda : [\,0,+\infty\,) \to [\,0,+\infty\,)$ be a strictly monotone increasing 
continuous function.
We define the \emph{$\lambda$-observable variance} $\ObsVar_\lambda(X)$ of $X$
to be the supremum of the \emph{$\lambda$-variance} of $f$,
\[
\Var_\lambda(f) := \int_X\int_X \lambda(|f(x)-f(x')|)\; d\mu_X(x)d\mu_X(x'),
\]
where $f$ runs over all $1$-Lipschitz functions on $X$.
If $\lambda(t) = t^2$, then $\Var_\lambda(f)$ is the usual variance of $f$.
The $\lambda$-variance $\Var_\lambda(\nu)$ of a Borel probability measure $\nu$ on $\R$
is defined by
\[
\Var_\lambda(\nu) := \int_\R \int_\R \lambda(|x-x'|)\;d\nu(x)d\nu(x').
\]

Denote by $\cV$ the set of Borel probability measures on $\R$
absolutely continuous with respect to the one-dimensional Lebesgue measure $\cL^1$
and with connected support,
and by $\cV_\lambda$ the set of $\nu \in \cV$ with finite $\lambda$-variance.
Note that $\cV_\lambda = \cV$ for bounded $\lambda$.

An mm-space $X$ is said to be \emph{essentially connected}
if we have $\mu_X^+(A)>0$ for any closed set $A\subset X$ with $0<\mu_X(A)<1$.

One of our main theorems in this paper is stated as follows.

\begin{thm} \label{thm:isop-rigidity}
  Let $X$ be an essentially connected geodesic mm-space with
  fully supported Borel probability measure.
  Assume that $X$ satisfies $\IC(\nu)$ for a measure $\nu \in \mathcal{V}_\lambda$.
  Then we have
  \[
  \ObsVar_\lambda(X) \le \Var_\lambda(\nu).
  \]
  The equality holds only if we have one of the following {\rm(1)}, {\rm(2)}, and {\rm(3)}.
  \begin{enumerate}
  \item $X$ is covered by minimal geodesics joining two fixed points
  $p$ and $q$ in $X$ with $d_X(p,q) = \diam X$.
  It is homeomorphic to a suspension provided $X$ is non-branching.
\item $X$ is covered by rays emanating from a fixed point in $X$.
  It is homeomorphic to a cone provided $X$ is non-branching.
\item $X$ is covered by straight lines in $X$ that may cross each other
  only on their branch points.
  It is homeomorphic to $Y \times \R$ for a metric space $Y$
  provided $X$ is non-branching.
\end{enumerate}
\end{thm}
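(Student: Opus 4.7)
The plan is to reduce $\ObsVar_\lambda(X) \le \Var_\lambda(\nu)$ to a one-dimensional monotone rearrangement inequality, and then to extract the geodesic foliation from the saturation of that rearrangement. For a $1$-Lipschitz function $f : X \to \R$, let $F(t) := \mu_X(\{f \le t\})$ be the cumulative distribution function of $f_*\mu_X$. The inclusion $U_\varepsilon(\{f \le t\}) \subset \{f \le t+\varepsilon\}$, which comes from $f$ being $1$-Lipschitz, gives $\mu_X^+(\{f \le t\}) \le F'(t)$ at every point of differentiability of $F$, and combining with $\IC(\nu)$ yields
\[
F'(t) \ge V'(V^{-1}(F(t))) \quad\text{for $\cL^1$-a.e.\ } t.
\]
Setting $g := V^{-1} \circ F$, this reads $g' \ge 1$ a.e., so the generalized inverse $T := g^{-1} = F^{-1} \circ V$ is well-defined and $1$-Lipschitz on $\supp \nu$ and satisfies $T_*\nu = f_*\mu_X$ (here essential connectedness prevents atoms of $f_*\mu_X$ from spoiling the definition of $T$). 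Since $\lambda$ is monotone,
\[
\Var_\lambda(f) = \int_\R\!\int_\R \lambda(|T(x)-T(x')|)\,d\nu(x)\,d\nu(x') \le \Var_\lambda(\nu),
\]
and taking the supremum over $f$ gives the first claim.

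For the rigidity, assume $\ObsVar_\lambda(X) = \Var_\lambda(\nu)$. Via Arzela--Ascoli applied to $1$-Lipschitz functions normalized at a chosen point of $\supp\mu_X$, together with lower semicontinuity of $\Var_\lambda$ along pointwise convergence, the supremum is attained by some $f$. Strict monotonicity of $\lambda$ then forces $T$ to be an isometry on $\supp\nu$; being monotone on a connected support, $T$ must be a translation, and hence $f_*\mu_X$ is a translate of $\nu$. Simultaneously $g' = 1$ a.e., so $\mu_X^+(\{f \le t\}) = F'(t)$ for $\cL^1$-a.e.\ $t$. Integrating this equality in $\varepsilon$ together with the a priori inclusion above gives
\[
\mu_X(\{f \le t+\varepsilon\}) = \mu_X(U_\varepsilon(\{f \le t\}))
\]
for all admissible $t$ and $\varepsilon > 0$, and full support of $\mu_X$ promotes this equality of masses to the set-theoretic identity $\{f \le t+\varepsilon\} \subset \overline{U_\varepsilon(\{f \le t\})}$. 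Consequently every $x$ with $f(x) = t+\varepsilon$ satisfies $d_X(x,\{f \le t\}) = \varepsilon$, and the geodesic hypothesis on $X$ yields a unit-speed minimizing segment from $x$ along which $f$ decreases at rate one. A patching argument, gluing such segments across consecutive level slices and using compactness in $f^{-1}([a,b])$ for $[a,b] \subset \supp f_*\mu_X$, produces through each $x$ a unit-speed minimizing geodesic of length $\diam(\supp\nu)$ along which $f$ decreases at rate one.

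The trichotomy now follows from the three possible shapes of $\supp\nu$. When $\supp\nu = [a,b]$ is a bounded interval, each $x$ lies on a minimizing geodesic of length $b-a$ between points of the extreme level sets, which under non-branching collapse to single endpoints $p$ and $q$ with $d_X(p,q) = \diam X$, giving case (1) and the suspension structure by parametrizing through the geodesics. When $\supp\nu$ is a half-line, the same mechanism produces rays emanating from a single extreme point, giving case (2) and the cone structure. When $\supp\nu = \R$, the geodesics extend bi-infinitely to straight lines, pairwise disjoint away from branch points; in the non-branching case $X \cong f^{-1}(0) \times \R$ via the foliation, which is case (3). The main obstacle I anticipate is precisely the passage from the measure-theoretic identity $\mu_X(\{f \le t+\varepsilon\}) = \mu_X(U_\varepsilon(\{f \le t\}))$ to the pointwise existence of a \emph{globally} minimizing geodesic through each point of $\supp\mu_X$, and then, in the non-branching setting, the uniqueness of the gradient geodesic at each interior point, which is what makes the foliation honest and yields the claimed topology.
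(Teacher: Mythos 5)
Your reduction of the inequality to a monotone rearrangement is sound and is, in spirit, a compressed version of the paper's chain $\IC(\nu)\Rightarrow\ICL(\nu)\Rightarrow\text{iso-dominant}$ (Theorems~\ref{thm:profileToLevy} and~\ref{thm:LevyToDominant}), applied to sublevel sets of $f$; the same is true of your use of Arzel\`a--Ascoli plus lower semicontinuity to attain the supremum (cf.\ Lemma~\ref{lem:f}). The rigidity half, however, has a genuine gap. Your route produces, through each $x$, a minimizing segment to the sublevel set $\{f\le t\}$ (modulo the unaddressed issue that $d_X(x,\{f\le t\})$ need not be attained when $X$ is not proper), but it never establishes that the extremal level sets are \emph{single points}. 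You write that the endpoints ``under non-branching collapse to single endpoints $p$ and $q$''; this misreads the theorem, which asserts two fixed points $p$, $q$ in case~(1), and a unique vertex in case~(2), \emph{unconditionally}---non-branching is invoked only for the suspension/cone/product topology. Uniqueness of the minimizer and maximizer of $f$ is exactly where the paper does real work: Lemma~\ref{lem:minimizer} shows that two minimizers would let one build a competitor $g$ (equal to $d_X(p,\cdot)+\inf f$ near one minimizer and to $b$ on a small ball around the other) whose distribution has a strictly larger atom at $b$ than $f_*\mu_X$ does; Lemma~\ref{lem:tail} then forces $g_*\mu_X=f_*\mu_X$, a contradiction. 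This argument leans on the fact that $f_*\mu_X$ is a \emph{dominant} of $X$ (so every competing $1$-Lipschitz distribution is controlled), a piece of structure your proof establishes but then drops.

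Two further steps are loose. First, the passage from ``$\mu_X^+(\{f\le t\})=F'(t)$ for a.e.\ $t$'' to ``$\mu_X(\{f\le t+\varepsilon\})=\mu_X(U_\varepsilon(\{f\le t\}))$ for all $t,\varepsilon$'' is asserted by ``integrating,'' but $\varepsilon\mapsto\mu_X(U_\varepsilon(\{f\le t\}))$ is only monotone, and you would need absolute continuity (or the paper's change-of-variables Lemma~\ref{lem:integral}, used inside Theorem~\ref{thm:profileToLevy}) to run the argument; the clean way to get this equality is to note that under equality $f_*\mu_X=\nu$ the $\ICL$ inequality becomes tight because both sides are squeezed between $V(a)$ and $V(b)$. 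Second, your ``patching argument'' for a \emph{global} minimizing geodesic through each $x$ is not supplied, and compactness of $f^{-1}([a,b])$ is not available in general. The paper replaces this with the generalized signed distance machinery: Lemma~\ref{lem:ext} shows $f$ equals the signed distance $\tilde f$ to $\{f=0\}$, and Lemma~\ref{lem:signed-dist} shows $d_{\tilde X}(x,g^{-1}(a))+d_{\tilde X}(x,g^{-1}(b))=b-a$, the key input being Lemma~\ref{lem:exc}, a clever construction showing that any slack would produce a one-parameter family of $1$-Lipschitz competitors whose distributions each carry an atom at a distinct location---impossible for measures dominated by $f_*\mu_X$. That atom-counting idea, together with the uniqueness of the extremizers, is the missing core of the rigidity argument, and your sketch does not reach it.
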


Applying the theorem to a complete Riemannian manifold yields
the following.

\begin{cor} \label{cor:isop-rigidity}
  Let $X$ be a complete and connected Riemannian manifold
  with a fully supported Borel probability measure $\mu_X$.
  Assume that $(X,\mu_X)$ satisfies $\IC(\nu)$
  for a measure $\nu \in \mathcal{V}_\lambda$.
  Then we have
  \[
  \ObsVar_\lambda(X) \le \Var_\lambda(\nu).
  \]
  The equality holds only if $X$ is diffeomorphic to
  either a twisted sphere or $Y \times \R$ for a differentiable manifold $Y$.
\end{cor}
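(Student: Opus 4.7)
The plan is to reduce to Theorem~\ref{thm:isop-rigidity} by checking its hypotheses in the Riemannian setting and then to upgrade each of its three topological conclusions to a smooth statement using the exponential map and standard Riemannian machinery.

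First I would verify the hypotheses. Since $X$ is complete and connected, the Hopf--Rinow theorem makes $X$ a geodesic metric space, and uniqueness of solutions of the geodesic equation makes $X$ non-branching. For essential connectedness, given a closed proper subset $A$ with $\mu_X(A)\in(0,1)$, connectedness of $X$ forces $A$ to have nonempty topological boundary, and in a normal chart around a boundary point the set $U_\varepsilon(A)\setminus A$ contains a tube of volume at least linear in $\varepsilon$; since $\mu_X$ is fully supported its density is locally bounded below, whence $\mu_X^+(A)>0$. Theorem~\ref{thm:isop-rigidity} then yields $\ObsVar_\lambda(X)\le\Var_\lambda(\nu)$ and, in the equality regime, places the non-branching manifold $X$ into one of the three alternatives (1), (2), (3).

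Next I would translate each alternative into a differentiable conclusion. In case (1), since $X$ is a non-branching manifold homeomorphic to a suspension, any two distinct minimizing geodesics between the poles $p$ and $q$ meet only at the endpoints; hence the cut locus of $p$ reduces to $\{q\}$ and $\exp_p$ is a diffeomorphism from the open ball of radius $d_X(p,q)$ in $T_pX$ onto $X\setminus\{q\}$, and symmetrically for $q$. Gluing the two resulting smooth open disks along their common boundary sphere presents $X$ as a twisted sphere. In case (2), every point lies on a ray from the fixed base point $p$, so no geodesic from $p$ has a cut point, $\exp_p:T_pX\to X$ is a global diffeomorphism, and $X\cong\R^n\cong\R^{n-1}\times\R$ already has the required form $Y\times\R$. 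In case (3), non-branching (automatic in the smooth setting) promotes the covering family of straight lines to a genuine smooth foliation of $X$; flowing along its unit tangent yields a diffeomorphism $X\cong Y\times\R$ for any smooth hypersurface $Y$ transverse to the foliation.

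The main obstacle is the smoothness of the line field, especially in case (3): Theorem~\ref{thm:isop-rigidity} produces only a topological foliation, whereas the corollary demands smooth dependence on the base point. I would overcome this by characterizing the initial direction of the unique covering line through each point $x$ via the gradient of an associated Busemann function at $x$, and then combining smoothness of $\exp$ with uniqueness of minimizing geodesics outside the cut locus to deduce that this direction depends smoothly on $x$; an analogous but simpler regularity argument justifies the smooth gluing in case (1).
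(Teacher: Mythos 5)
Your reduction to Theorem~\ref{thm:isop-rigidity} is the intended route, and your treatment of cases (1) and (2) via the cut locus is sound (in case (1) one also needs to check that the covering geodesics exhaust every direction at $p$, and in both cases the concatenation trick shows there is no additional minimizer, but these details are readily supplied). However, the verification of essential connectedness has a genuine error. You assert that ``since $\mu_X$ is fully supported its density is locally bounded below,'' but a fully supported Borel probability measure on a Riemannian manifold need not be absolutely continuous with respect to the volume measure, and even an absolutely continuous fully supported measure may have density tending to zero at a point. Concretely, on $X=\R$ take $d\mu_X=\phi\,dt$ with $\phi$ continuous, positive on $\R\setminus\{0\}$, and $\phi(0)=0$: then $\mu_X$ is fully supported yet $\mu_X^+((\,-\infty,0\,])=0$, so $X$ is not essentially connected. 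Essential connectedness is an explicit hypothesis of Theorem~\ref{thm:isop-rigidity} and does not follow from the corollary's stated hypotheses by the argument you give (nor, apparently, by any argument hidden in the paper, which in fact omits it from the corollary). Without it, one cannot invoke Theorem~\ref{thm:isoTFAE} to conclude that $\nu$ is a dominant of $X$, and the entire chain breaks.

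On the smoothness in case (3): you correctly flag this as the delicate step and sketch a Busemann-type argument. A cleaner route is to note that the two-sided identities $f(x)=a-d(x,f^{-1}(a))=a'+d(x,f^{-1}(a'))$ for $a'<f(x)<a$ force $f$ to be locally semiconcave and semiconvex simultaneously, hence $C^{1,1}$ with $|\nabla f|\equiv 1$; but upgrading the flow of a merely Lipschitz unit vector field to a diffeomorphism still needs a further argument. The paper's own route (hinted at in the remark following the corollary and made explicit in Lemma~\ref{lem:splitting}) is to realize $f$ as a $C^\infty$ isoparametric function via Almgren regularity and the first variation of weighted area, but that machinery uses smoothness of $\mu_X$, which the corollary does not assume; so the diffeomorphism conclusion in case (3) really does require more care than either your sketch or the paper supplies.
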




A typical example of Theorem \ref{thm:isop-rigidity} and Corollary \ref{cor:isop-rigidity}
is obtained as a warped product manifold $(J\times F,dt^2 + \varphi(t)^2 g)$,
where $J$ is an interval of $\R$ and $(F,g)$ a compact Riemannian manifold
(see Section \ref{ssec:warped-prod} for the detail).

We remark that, in Theorem \ref{thm:isop-rigidity} and Corollary \ref{cor:isop-rigidity},
the equality assumption for the $\lambda$-observable variance cannot be
replaced by the existence of a straight line to obtain a topological splitting of $X$.
Such a counter example is shown in Section \ref{ssec:non-splitting}.

The isoperimetric comparison condition is much weaker than
the lower boundedness of Ricci curvature,
or the curvature-dimension condition due to
Lott-Villani \cite{LV:Ric-mm} and Sturm \cite{St:geomI, St:geomII}.
In fact, if an mm-space has positive Cheeger constant,
then it satisfies $\IC(\nu)$ for some measure $\nu \in \cV$
(see Proposition  \ref{prop:Cheeger}).
In particular, any essentially connected and compact Riemannian space
with cone-like singularities
satisfies $\IC(\nu)$ for some $\nu \in \cV$, however, it does not satisfy
the curvature-dimension condition in general.
Actually, we find no example of an essentially connected mm-space
that does not satisfy $\IC(\nu)$ for any $\nu$.

We obtain the equality $I_X \circ V = V'$ a.e.~on $V^{-1}(\Image\mu_X)$
from the assumption of Theorem \ref{thm:isop-rigidity}.
However, the equality $I_X \circ V = V'$ a.e. is strictly weaker than
$\ObsVar_\lambda(X) = \Var_\lambda(\nu)$ even under $\IC(\nu)$.
In fact, we prove that an mm-space with some mild condition
always satisfies $I_X \circ V = V'$ a.e.~for some $\nu$
(see Proposition \ref{prop:Cheeger} and Corollary \ref{cor:Cheeger}).

In the proof of Corollary \ref{cor:isop-rigidity},
we obtain an isoparametric function on $X$
as a $1$-Lipschitz function attaining the observable $\lambda$-variance.
Thus, the problem of whether the twisted sphere in Corollary \ref{cor:isop-rigidity}
is a sphere or not is related to a result of Qian-Tang \cite{QT:isopara}, in which
they proved that every odd-dimensional exotic sphere admits
no totally isoparametric function with two points as the focal set.
However, it seems to be difficult to prove that the isoparametric function in our proof is total.
Note that any twisted sphere of dimension at most six is diffeomorphic to
a sphere.

As an application of (the proof of) Theorem \ref{thm:isop-rigidity},
we obtain the following new splitting theorem.

\begin{thm} \label{thm:splitting}
  Let $X$ be a complete and connected Riemannian manifold
  with a fully supported smooth probability measure $\mu_X$
  of Bakry-\'Emery Ricci curvature bounded below by one.
  Assume that the one-dimensional Gaussian measure, say $\gamma^1$,
  on $\R$ has finite $\lambda$-variance.
  Then we have
  \[
  \ObsVar_\lambda(X) \le \Var_\lambda(\gamma^1)
  \]
  and the equality holds if and only if $X$ is isometric to $Y \times \R$
  and $\mu_X = \mu_Y \otimes \gamma^1$ up to an isometry,
  where $Y$ is a complete Riemannian manifold
  with a smooth probability measure $\mu_Y$
  of Bakry-\'Emery Ricci curvature bounded below by one.
\end{thm}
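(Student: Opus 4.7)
The plan is to reduce this theorem to Theorem \ref{thm:isop-rigidity} via the Bakry--Ledoux Gaussian isoperimetric comparison, and then upgrade the resulting foliation by straight lines into a Riemannian and measure-theoretic product.

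\emph{Reduction to the rigidity theorem.} First I would invoke the Bakry--Ledoux comparison theorem: a smooth complete weighted Riemannian manifold with Bakry--\'Emery Ricci curvature bounded below by $1$ has isoperimetric profile satisfying $I_X \ge V' \circ V^{-1}$ on $(0,1)$, where $V$ is the cumulative distribution function of $\gamma^1$. Since $\gamma^1$ is absolutely continuous with connected support $\R$, and lies in $\cV_\lambda$ by hypothesis, this is exactly $\IC(\gamma^1)$. As $X$ is geodesic, essentially connected, and carries a fully supported Borel probability measure, Corollary \ref{cor:isop-rigidity} applies and gives $\ObsVar_\lambda(X) \le \Var_\lambda(\gamma^1)$. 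The ``if'' direction of the equality statement is immediate: on $Y \times \R$ with measure $\mu_Y \otimes \gamma^1$, the projection $(y,t) \mapsto t$ is $1$-Lipschitz and has $\lambda$-variance equal to $\Var_\lambda(\gamma^1)$, which together with the inequality just proved forces equality.

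\emph{Rigidity case analysis.} Suppose now that $\ObsVar_\lambda(X) = \Var_\lambda(\gamma^1)$. Corollary \ref{cor:isop-rigidity} asserts that $X$ is diffeomorphic either to a twisted sphere or to $Y \times \R$. The twisted-sphere alternative is excluded as follows: the proof of Theorem \ref{thm:isop-rigidity} in the equality case produces a $1$-Lipschitz function $f\colon X \to \R$ with $f_\ast \mu_X$ equal to $\gamma^1$ up to translation; since $\supp \gamma^1 = \R$, the image $f(X)$ is unbounded, contradicting compactness of a twisted sphere. Hence we are in case (3) of Theorem \ref{thm:isop-rigidity}: $X$ is foliated by bi-infinite minimizing geodesics, and the non-branching property of a Riemannian manifold makes this foliation a genuine one.

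\emph{From foliation by lines to isometric and measure-theoretic splitting.} The remaining and main obstacle is to upgrade this foliation into an isometric splitting $X = Y \times \R$ together with the product decomposition $\mu_X = \mu_Y \otimes \gamma^1$. For this I would apply the weighted Bochner identity to the $1$-Lipschitz maximizer $f$. The rigidity of Bakry--Ledoux along the foliation geodesics forces $|\nabla f| \equiv 1$; substituting this into the integrated weighted Bochner formula and using $\Ric_\psi(\nabla f,\nabla f) \ge 1$ forces $\Hess f \equiv 0$ and $\Delta_\psi f = -f$ (after normalizing $f$ to be centered). Thus $\nabla f$ is a parallel unit vector field whose orthogonal distribution integrates to a totally geodesic foliation, giving the isometric splitting $X = Y \times \R$ with $f$ the $\R$-coordinate. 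The equation $\Delta_\psi f = -f$ then forces the weight $\psi$ (where $\mu_X = e^{-\psi}\vol$) to take the form $\psi(y,t) = \psi_Y(y) + t^2/2 + c$, i.e., $\mu_X = \mu_Y \otimes \gamma^1$. The product formula $\Ric_\psi = \Ric_{\psi_Y} \oplus 1$ finally transfers the Bakry--\'Emery Ricci bound $\ge 1$ from $X$ to $Y$. I expect the delicate points to be establishing enough smoothness and integrability of $f$ to justify the Bochner identity together with its rigidity, and rigorously deducing the precise identification $f_\ast \mu_X = \gamma^1$ from the hypotheses of Theorem \ref{thm:isop-rigidity}.
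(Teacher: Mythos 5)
Your overall strategy matches the paper's: Bakry--Ledoux $\Rightarrow \IC(\gamma^1)$, then Theorem \ref{thm:isop-rigidity} produces a $1$-Lipschitz $f$ with $f_*\mu_X=\gamma^1$, rule out the compact alternatives since $\supp\gamma^1=\R$, use the straight-line foliation of Theorem \ref{thm:max-distr}(3) to get $|\nabla f|\equiv 1$, feed Bochner, split by parallelism, and finally compute the weight. The ``only if'' direction and the reduction via Theorem \ref{thm:isoTFAE} are handled the same way as in the paper.

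The gap is the claim that, once $|\nabla f|\equiv 1$, ``substituting this into the integrated weighted Bochner formula and using $\Ric_{\mu_X}(\nabla f,\nabla f)\geq 1$ forces $\Hess f\equiv 0$ and $\Delta_\psi f=-f$.'' It does not. With $|\nabla f|^2\equiv 1$ the Bochner identity reads
\[
-\langle \nabla f,\nabla Lf\rangle \;=\; \|\Hess f\|_{HS}^2+\Ric_{\mu_X}(\nabla f,\nabla f)\;\geq\;\|\Hess f\|_{HS}^2+1,
\]
and integrating against $\mu_X$ only yields $\int(Lf)^2\,d\mu_X \geq \int\|\Hess f\|^2_{HS}\,d\mu_X + 1$. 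Writing $Lf=-f+h$ with $h\perp f$ in $L^2(\mu_X)$ (using $\int f\,Lf=-\int|\nabla f|^2=-1$ and $\int f^2=1$), this becomes $\int h^2\geq\int\|\Hess f\|^2_{HS}$, which gives you neither $h=0$ nor $\Hess f=0$. You are missing an independent identification of $Lf$. The paper supplies it geometrically (Lemma \ref{lem:splitting} and the computation that follows): since $f_*\mu_X$ is an iso-dominant, the sublevel sets $\{f\le t\}$ realize the isoperimetric profile; the first variation of weighted area then shows each level set has constant weighted mean curvature equal to $Lf$; combined with $|\nabla f|=1$ the weighted area of $f^{-1}(t)$ is exactly the Gaussian density $A(t)=\tfrac{1}{\sqrt{2\pi}}e^{-t^2/2}$, and $A'(t)=Lf\cdot A(t)$ forces $Lf=-f$. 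Only now does pointwise Bochner give $1\geq\|\Hess f\|^2_{HS}+1$ and hence $\Hess f\equiv0$. A parallel comment applies to regularity: a priori $f$ is only Lipschitz, so you cannot write Bochner at all; Lemma \ref{lem:splitting} establishes that $f$ is $C^\infty$ (via Almgren regularity of isoperimetric minimizers together with the absence of singular and focal points coming from the straight-line foliation of Theorem \ref{thm:max-distr}(3)). You flagged this as a ``delicate point'' but left it unresolved, and it is essential, not a technicality. If you want a more analytic alternative to the first-variation step, you could use the isoparametricity $Lf=u(f)$ to turn Bochner into $u'(f)\le -1$, and then integrate against $\gamma^1$ on $\R$ to force $u'\equiv -1$; but this still requires first proving isoparametricity and smoothness of $f$, which is exactly the content of Lemma \ref{lem:splitting}.
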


If $\lambda(t) = t^2$, then Theorem \ref{thm:splitting} follows from
Cheng-Zhou's result \cite{CZ:eigen} (see Remark \ref{rem:eigenObsVar}
for the detail).

We see some other famous splitting theorems for Bakry-\'Emery Ricci curvature
in the papers by Lichnerowicz \cite{Lch:splitting}
and Fang-Li-Zhang \cite{FLZ:splittting}.

Note that if the Bakry-\'Emery Ricci curvature is bounded away from zero,
then the total of the associated measure is always finite
(see \cite{Mg:density, St:geomI}), so that, for Theorem \ref{thm:splitting},
the assumption for the measure $\mu_X$ to be probability is not restrictive.

Although the assumption of Theorem \ref{thm:splitting} is stronger than
Corollary \ref{cor:isop-rigidity},
yet the existence of a straight line instead of the equality in Theorem \ref{thm:splitting}
is not enough for $X$ to split isometrically.
For instance, an $n$-dimensional hyperbolic plane
with a certain smooth probability measure
has Bakry-\'Emery Ricci curvature bounded below by one
(see \cite{WW:comp}*{Example 2.2}),
for which the equality in Theorem \ref{thm:splitting} does not hold.

It is a natural conjecture that Theorem \ref{thm:splitting} would be true also for
an $\RCD(1,\infty)$-space, for which we have no proof at present.
One of the difficulties is the lack of the first variation formula
of weighted area in an $\RCD$-space.

Considering the diameter, we have the following theorem.

\begin{thm} \label{thm:diam}
  Let $X$ be an essentially connected compact geodesic mm-space with
  a fully supported Borel probability measure.
  Assume that $X$ satisfies $\IC(\nu)$ for a measure $\nu \in \mathcal{V}$
  with compact support.
  Then we have
  \[
  \diam X \le \diam\supp\nu.
  \]
  The equality holds if and only if $\ObsVar_\lambda(X) = \Var_\lambda(\nu)$.
  Consequently, in the equality case, we have 
  {\rm(1)} of Theorem \ref{thm:isop-rigidity}.
\end{thm}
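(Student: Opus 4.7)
The plan is to translate the isoperimetric comparison $\IC(\nu)$ into a one-dimensional Lipschitz statement via monotone rearrangement. For any $1$-Lipschitz $f\colon X\to\R$, with $G$ the cumulative distribution function of $f_*\mu_X$, set $\tilde f := G^{-1}\circ V\colon\R\to\R$, a monotone map with $\tilde f_*\nu = f_*\mu_X$. The $1$-Lipschitzness of $f$ yields $U_\varepsilon(f^{-1}((-\infty,a])) \subset f^{-1}((-\infty,a+\varepsilon])$, so $G'(a) \ge \mu_X^+(f^{-1}((-\infty,a])) \ge I_X(G(a))$ at points of differentiability. Since $G(\tilde f(t))=V(t)$ and $\IC(\nu)$ gives $I_X(V(t))\ge V'(t)$ a.e., one obtains $\tilde f'(t) = V'(t)/G'(\tilde f(t)) \le 1$ a.e., so $\tilde f$ is $1$-Lipschitz on $\supp\nu$.

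For the diameter bound, by compactness I choose $p,q\in X$ with $d_X(p,q) = \diam X$ and set $f(x) := d_X(p,x)$. As $X$ is geodesic and $\mu_X$ is fully supported, $\supp(f_*\mu_X) = f(X) = [0,\diam X]$, so $G$ is continuous and strictly increasing on $[0,\diam X]$ and $\tilde f$ continuously maps the interval $\supp\nu$ onto $[0,\diam X]$. Since $\tilde f$ is $1$-Lipschitz, $\diam X \le \diam\supp\nu$.

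For the equivalence I argue in both directions. If $\diam X = \diam\supp\nu$, then the $\tilde f$ from the previous paragraph is a $1$-Lipschitz, monotone, continuous surjection between two closed intervals of the same length, hence an isometric translation of $\supp\nu$. Therefore $f_*\mu_X$ is a translate of $\nu$, giving $\Var_\lambda(f) = \Var_\lambda(\nu)$ and $\ObsVar_\lambda(X) \ge \Var_\lambda(\nu)$; Theorem \ref{thm:isop-rigidity} furnishes the reverse. Conversely, if $\ObsVar_\lambda(X) = \Var_\lambda(\nu)$, then compactness of $X$ together with Arzel\`a--Ascoli delivers a $1$-Lipschitz $f_0$ attaining the supremum. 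From the identity
\[
\Var_\lambda(\nu) = \Var_\lambda(f_0) = \int_\R\!\!\int_\R \lambda\bigl(|\tilde f_0(t)-\tilde f_0(t')|\bigr)\,d\nu(t)d\nu(t') \le \Var_\lambda(\nu),
\]
strict monotonicity of $\lambda$ forces $|\tilde f_0(t)-\tilde f_0(t')|=|t-t'|$ for $\nu\otimes\nu$-a.e.~pair, so $\tilde f_0|_{\supp\nu}$ is an isometric translation. Hence $\diam\supp\nu \le \diam f_0(X) \le \diam X$, which combined with the diameter bound gives equality.

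The ``consequently'' clause follows at once from Theorem \ref{thm:isop-rigidity}: cases (2) and (3) require rays or straight lines in $X$, both incompatible with compactness, so only (1) remains. I expect the main technical obstacle to be the careful execution of the rearrangement step --- handling possible singular parts of $f_*\mu_X$, justifying the almost-everywhere differentiability of $G$, and upgrading the almost-everywhere isometry condition to a genuine isometric translation of $\supp\nu$ --- but under $\nu\in\cV$, the geodesic and essentially-connected hypotheses, and the full support of $\mu_X$, these are routine.
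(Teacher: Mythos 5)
Your overall structure mirrors the paper's: derive the diameter bound from a domination statement, take $f = d_X(p,\cdot)$ in the ``if $\diam X = \diam\supp\nu$'' direction, and in the converse direction produce an attaining $1$-Lipschitz function whose distribution is an isometric copy of $\nu$. The ``consequently'' clause is handled correctly by ruling out rays and lines in a compact space. However, the opening ``monotone rearrangement'' step --- where you set $\tilde f := G^{-1}\circ V$ and conclude $\tilde f$ is $1$-Lipschitz from the a.e.\ bound $\tilde f' \le 1$ --- has a genuine gap. For a continuous monotone function, an a.e.\ derivative bound does not by itself yield a Lipschitz estimate (Cantor-type functions have $\tilde f' = 0$ a.e.\ yet are non-constant); one must first rule out a singular part of $\tilde f$, or equivalently of $G^{-1}$. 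This is exactly what the paper labors over: the implication $\IC(\nu)\Rightarrow\ICL(\nu)$ in Theorem~\ref{thm:profileToLevy} is proved by a careful integration argument using Lemma~\ref{lem:integral} together with the essential connectedness (which guarantees $g'(s) > 0$), and only after establishing $\ICL(\nu)$ does Theorem~\ref{thm:LevyToDominant} deliver the $1$-Lipschitz map $G$. Your remark that handling ``possible singular parts of $f_*\mu_X$'' is ``routine'' is precisely where the proof would fail as written.

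Once you invoke the paper's Theorem~\ref{thm:isoTFAE} (or Theorem~\ref{thm:isop}) to conclude that $\nu$ is an (iso-)dominant, the rest of your argument goes through and matches the paper's: from domination, $\diam\supp\varphi_*\mu_X \le \diam\supp\nu$ for every $1$-Lipschitz $\varphi$; equality of diameters plus domination forces $f_*\mu_X$ to be a translate of $\nu$ for $f = d_X(p,\cdot)$; and in the converse direction, Lemma~\ref{lem:f} (which internally does the Arzel\`a--Ascoli and equality-in-$\lambda$ argument you sketch) gives $f_*\mu_X=\nu$ directly. So the fix is to replace the ad hoc rearrangement step by a citation of Theorem~\ref{thm:isoTFAE}, rather than re-deriving it.
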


\begin{cor}
  Let $X$ be a complete and connected Riemannian manifold with
  a fully supported Borel probability measure.
  Assume that $X$ satisfies $\IC(\nu)$ for a measure $\nu \in \mathcal{V}$
  with compact support.
  Then we have
  \[
  \diam X \le \diam\supp\nu.
  \]
  The equality holds only if $X$ is diffeomorphic to a twisted sphere.
\end{cor}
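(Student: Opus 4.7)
The plan is to reduce the corollary to Theorem~\ref{thm:diam} and the equality case of Corollary~\ref{cor:isop-rigidity}, whose hypotheses are met once $X$ is shown to be compact. As in Corollary~\ref{cor:isop-rigidity}, a complete connected Riemannian manifold with fully supported Borel probability measure is automatically a geodesic mm-space that is essentially connected. Once compactness is established, Theorem~\ref{thm:diam} gives $\diam X\le\diam\supp\nu$, and its equality case places $X$ in case~(1) of Theorem~\ref{thm:isop-rigidity}: $X$ is covered by minimal geodesics joining two fixed points $p,q$ with $d_X(p,q)=\diam X$. Since a Riemannian manifold is non-branching and compactness excludes the $Y\times\R$ alternative, the argument of Corollary~\ref{cor:isop-rigidity} then identifies $X$, in the smooth category, with a twisted sphere.

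The heart of the plan is therefore the compactness of $X$, which I would verify by contradiction. Suppose $X$ is noncompact, fix $x_0\in X$, and let $f(x):=d_X(x_0,x)$, a $1$-Lipschitz function on $X$. Write $F$ for the cumulative distribution function of $f_{\ast}\mu_X$ and $V$ for that of $\nu$. Noncompactness of $X$ and full support of $\mu_X$ on the connected manifold make $F$ strictly increasing on $[0,\infty)$ with $F(t)<1$ for every finite $t$. The inclusion $U_\varepsilon(f^{-1}((-\infty,t]))\subset f^{-1}((-\infty,t+\varepsilon])$ yields $\mu_X^+(f^{-1}((-\infty,t]))\le F'(t)$ wherever $F$ is differentiable, and $\IC(\nu)$ then gives
\[
F'(t)\ge I_X(F(t))\ge V'(V^{-1}(F(t)))\quad\text{for a.e.\ }t>0.
\]
Setting $G:=V^{-1}\circ F$, which takes values in $\supp\nu=[a,b]$, and using $F'=V'(G)\,G'$, we conclude $G'\ge 1$ a.e.\ wherever $V'(G)>0$. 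Since $\nu\in\mathcal{V}$ has density a.e.\ positive on its connected support $(a,b)$ and $F(t)\in(0,1)$ for all $t>0$ keeps $G(t)\in(a,b)$, integration gives
\[
\diam\supp\nu = b-a\ge\int_0^{\infty}G'(t)\,dt = +\infty,
\]
a contradiction. Hence $X$ has finite diameter, and by Hopf--Rinow it is compact.

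The main obstacle is the compactness argument itself, in particular the co-area-type estimate $\mu_X^+(A_t)\le F'(t)$ and the transfer of the a.e.\ clause in $\IC(\nu)$ along the change of variables $v=F(t)$. In the Riemannian setting $\mu_X$ is non-atomic and fully supported, so $\Image\mu_X=[0,1]$ and $F$ is absolutely continuous, which makes both of these points routine. The subtlety of possible vanishing of $V'$ near the endpoints of $\supp\nu$ is controlled by the observation $F(t)\in(0,1)$ for every finite $t>0$, which keeps $G(t)$ strictly inside $(a,b)$; otherwise one would need to argue separately on the set $\{t:G(t)\in\{a,b\}\}$, and this is what makes the noncompactness assumption genuinely usable.
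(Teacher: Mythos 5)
Your reduction to Theorem~\ref{thm:diam} is sound once compactness is in hand, but the architecture is inverted and the compactness argument has a gap. The efficient route is to observe that the \emph{diameter bound itself} does not need compactness: the first half of the paper's proof of Theorem~\ref{thm:diam} only invokes Theorem~\ref{thm:isoTFAE} to conclude that $\nu$ is a dominant of $X$, and then for any $1$-Lipschitz $\varphi:X\to\R$ one has $\diam\varphi(X)=\diam\supp\varphi_*\mu_X\le\diam\supp\nu$, so taking $\varphi=d_X(x,\cdot)$ gives $\diam X\le\diam\supp\nu<+\infty$. No compactness hypothesis enters. Since $X$ is a complete Riemannian manifold of finite diameter, Hopf--Rinow yields compactness for free, after which the compact Theorem~\ref{thm:diam} and the argument of Corollary~\ref{cor:isop-rigidity} finish the job exactly as you describe. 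Your contradiction scheme is therefore doing work that is not needed.

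More seriously, the contradiction argument as written does not close. The step that integrates $G'\ge 1$ over $(0,\infty)$ relies on the claim that a measure $\nu\in\cV$ has density $V'$ positive $\cL^1$-a.e.\ on its connected support. That is false in general: take $K\subset[0,1]$ a fat Cantor set of positive Lebesgue measure and $d\nu=c\,\mathbf{1}_{[0,1]\setminus K}\,d\cL^1$; then $\nu$ is absolutely continuous with $\supp\nu=[0,1]$ connected, yet $V'=0$ on the positive-measure set $K$. On the $t$-set where $V'(G(t))=0$, the inequality $F'\ge V'(V^{-1}\circ F)$ is vacuous and $G'\ge1$ cannot be inferred, so $\int_0^\infty G'\,dt$ is not forced to diverge. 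You flag the vanishing of $V'$ only near the endpoints, but it can also happen on a fat interior set, which is exactly where the estimate would be needed. There is also an unaddressed change-of-variables issue in transferring the ``a.e.\ $s$'' clause in $\IC(\nu)$ to ``a.e.\ $t$'' through $s=V^{-1}(F(t))$: the strictly increasing $G$ need not carry null sets to null sets in the required direction. Both of these obstacles disappear if you take the direct route above.
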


Combining Theorem \ref{thm:diam} with
Ketterer's maximal diameter theorem \cite{Kt:cones}
and Cavalletti-Mondino's isoperimetric comparison theorem \cite{CvMd:sharp-isop},
we have the following.

\begin{cor}
  
  Let $X$ be an $\RCD^*(N-1,N)$-space and
  let $d\sigma^N(\theta) := C_N^{-1} \sin^{N-1}\theta\,d\theta$ on $[\,0,\pi\,]$,
  where $N > 1$ is a real number and $C_N := \int_0^\pi \sin^{N-1}\theta\,d\theta$.
  Then we have
  \[
  \ObsVar_\lambda(X) \le \Var_\lambda(\sigma^N),
  \]
  and the equality holds if and only if
  $X$ is isomorphic to the spherical suspension $Y \times_{\sin^{N-1}} [\,0,\pi\,]$ 
  over an $\RCD^*(N-2,N-1)$-space $Y$,
  where the spherical suspension over $Y$
  is equipped with the product measure $\mu_Y \otimes \sigma^N$.
\end{cor}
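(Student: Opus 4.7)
The plan is to combine the three ingredients cited immediately before the statement: Cavalletti--Mondino's sharp isoperimetric comparison on $\RCD^*(N-1,N)$-spaces, our Theorems \ref{thm:isop-rigidity} and \ref{thm:diam}, and Ketterer's maximal diameter theorem. The argument will assemble these as a short chain of implications.

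First I would verify that $X$ satisfies $\IC(\sigma^N)$. The one-dimensional model distribution for the sharp L\'evy--Gromov inequality on an $\RCD^*(N-1,N)$-space is precisely $\sigma^N$, and Cavalletti--Mondino's theorem provides the pointwise bound $I_X(v) \ge V'(V^{-1}(v))$ for $v \in (0,1)$, where $V$ is the cumulative distribution function of $\sigma^N$. Since $\sigma^N$ is absolutely continuous with connected support $[\,0,\pi\,]$, this is exactly condition \eqref{eq:IC-abc}. Moreover $\sigma^N \in \cV_\lambda$ because $\supp\sigma^N$ is compact, so that $\Var_\lambda(\sigma^N) \le \lambda(\pi) < +\infty$.

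Next I would check the remaining hypotheses of Theorems \ref{thm:isop-rigidity} and \ref{thm:diam}. By Bonnet--Myers, $X$ is compact and geodesic with $\diam X \le \pi$, and its reference measure is fully supported by the convention for $\RCD^*$-spaces. Essential connectedness will follow from Proposition \ref{prop:Cheeger}: the function $V' \circ V^{-1}$ is bounded below by a positive constant on every compact subinterval of $(0,1)$, which forces a positive Cheeger constant and hence $\mu_X^+(A) > 0$ for every closed $A$ with $0 < \mu_X(A) < 1$. Theorem \ref{thm:isop-rigidity} applied with $\nu = \sigma^N$ then yields the inequality $\ObsVar_\lambda(X) \le \Var_\lambda(\sigma^N)$.

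For the equality direction I would appeal to Theorem \ref{thm:diam}: the equality $\ObsVar_\lambda(X) = \Var_\lambda(\sigma^N)$ is equivalent to $\diam X = \diam\supp\sigma^N = \pi$, which is the maximum allowed by Bonnet--Myers. Ketterer's maximal diameter theorem then identifies $X$ with the spherical suspension $Y \times_{\sin^{N-1}} [\,0,\pi\,]$ over an $\RCD^*(N-2,N-1)$-space $Y$ carrying the product measure $\mu_Y \otimes \sigma^N$. The converse implication reduces to a short explicit computation: on such a suspension, the distance from one of the two poles is a $1$-Lipschitz function whose pushforward of $\mu_X$ is exactly $\sigma^N$, so its $\lambda$-variance realises $\Var_\lambda(\sigma^N)$, and combined with the inequality above this gives equality. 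The main obstacle I expect is verifying essential connectedness cleanly in the $\RCD^*$ setting, since it is a condition on the boundary measure rather than directly on the Cheeger constant; all of the remaining steps are routine assemblies of results already in hand.
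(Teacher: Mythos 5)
Your proposal assembles exactly the ingredients the paper indicates (Cavalletti--Mondino for $\IC(\sigma^N)$, Theorem \ref{thm:diam} to convert the $\ObsVar$ equality into $\diam X = \pi$, and Ketterer's maximal diameter theorem for the identification with the spherical suspension), and the two directions of the equivalence are handled correctly. The only wrinkle is the essential-connectedness step: the detour through Proposition \ref{prop:Cheeger} is unnecessary and, as stated, incomplete --- a lower bound on compact subintervals of $(0,1)$ does not by itself give a positive Cheeger constant --- but it can be bypassed entirely, since Cavalletti--Mondino's pointwise inequality already yields $I_X(v) \ge V'\circ V^{-1}(v) > 0$ for every $v \in (0,1)$, which is exactly the definition of essential connectedness.
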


For $\lambda(t) = t^2$, we calculate the variance of $\sigma^N$ as follows:

\begin{equation} \label{eq:Var-sigma}
  \Var_{t^2}(\sigma^N)=\frac 12 \left(\zeta(2,h) -\sum_{k=0}^{\lceil\frac{N-1}2\rceil-1}
  \frac 1{(h+k)^2}\right)
\end{equation}
(see Section \ref{sec:Var-sigma}), where $\zeta(s,q) := \sum_{k=0}^\infty \frac{1}{(q+k)^s}$
is the Hurwitz zeta function, $h:=\frac{N-1}2-\lceil\frac{N-1}2\rceil+1\in(0,1]$,
and $\lceil x\rceil$ is the smallest integer not less than $x$.

\bigskip\noindent
{\bf Idea of proof of Theorem \ref{thm:isop-rigidity}.}
Let us show the idea of the proof of Theorem \ref{thm:isop-rigidity} briefly.
Theorem \ref{thm:isop-rigidity} follows from the two following theorems,
Theorems \ref{thm:isop} and \ref{thm:max-distr-simple}.

For two Borel probability measures $\mu$ and $\nu$ on $\R$,
we say that \emph{$\mu$ dominates $\nu$} if there exists
a $1$-Lipschitz function $f : \R \to \R$ such that $f_*\mu = \nu$,
where $f_*\mu$ is the push-forward of $\mu$ by $f$, often called the \emph{distribution} of $f$.
A Borel probability measure is called a \emph{dominant of $X$}
if it dominates $f_*\mu_X$ for any $1$-Lipschitz function $f : X \to \R$.

\begin{thm} \label{thm:isop}
  Let $X$ be an essentially connected geodesic mm-space.
  If $X$ satisfies $\IC(\nu)$ for a measure $\nu \in \cV_\lambda$,
  then $\nu$ is a dominant of $X$.
  In particular, we have
  \[
  \ObsVar_\lambda(X) \le \Var_\lambda(\nu).
  \]
\end{thm}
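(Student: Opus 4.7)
\medskip\noindent
\textbf{Plan.} Let $f:X\to\R$ be 1-Lipschitz. To exhibit $\nu$ as a dominant of $X$, I seek a 1-Lipschitz $g:\R\to\R$ satisfying $g_*\nu=f_*\mu_X$. The natural candidate is the monotone rearrangement
\[
g := Q\circ V,
\]
where $V$ is the cumulative distribution function of $\nu$ and $Q(u):=\inf\{s\in\R : F(s)\ge u\}$ is the quantile function of $f_*\mu_X$, with $F(s):=\mu_X(\{f\le s\})$. Since $V_*\nu$ is the uniform distribution on $[\,0,1\,]$ and $Q$ sends the uniform distribution to $f_*\mu_X$, one has $g_*\nu=f_*\mu_X$ automatically, so the whole content of the theorem reduces to showing that this specific $g$ is 1-Lipschitz.

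The Lipschitz bound will come from a differential inequality for $F$. Because $f$ is 1-Lipschitz one has $U_\varepsilon(\{f\le s\})\subseteq\{f<s+\varepsilon\}$, so
\[
\mu_X^+(\{f\le s\})\le\limsup_{\varepsilon\to 0+}\frac{F(s+\varepsilon)-F(s)}{\varepsilon}.
\]
The definition of the isoperimetric profile gives $I_X(F(s))\le\mu_X^+(\{f\le s\})$, and since $F(s)\in\Image\mu_X$ for every $s$, $\IC(\nu)$ in the form \eqref{eq:IC-abc} yields $V'(V^{-1}(F(s)))\le I_X(F(s))$ for $\cL^1$-a.e. $s$. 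Chaining the three bounds,
\[
V'(V^{-1}(F(s)))\le F'(s)\qquad\text{$\cL^1$-a.e.,}
\]
which is precisely the change-of-variables version of the pointwise bound $g'(t)\le 1$.

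The main obstacle is upgrading this almost-everywhere inequality to the global Lipschitz estimate $|g(t)-g(t')|\le|t-t'|$. I would do this by establishing that $F$ is strictly increasing and absolutely continuous on $\supp f_*\mu_X$. Strict monotonicity is short: a flat interval $F\equiv c$ on $[\,a,b\,]$ with $0<c<1$ would make $\{f\le a\}$ a closed set with $\mu_X^+=0$ and $0<\mu_X(\cdot)<1$, contradicting essential connectedness, while the geodesic hypothesis guarantees that $\supp f_*\mu_X$ is an interval and $Q$ is single-valued there. Absolute continuity of $F$ is the delicate point: the estimate $F'\ge V'\circ V^{-1}\circ F>0$ should rule out a singular part of $dF$, but making this rigorous requires care with the way the $\cL^1$-a.e. statement on $\Image\mu_X$ interacts with the potentially non-absolutely-continuous change of variables by $F$. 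Once absolute continuity is in place, $\psi:=V^{-1}\circ F$ is absolutely continuous with $\psi'\ge 1$ a.e., hence $\psi(s')-\psi(s)\ge s'-s$; since $\psi$ is the inverse of $g$ this inverts to $g(t')-g(t)\le t'-t$.

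The variance inequality then follows at once: since $g$ is 1-Lipschitz and $\lambda$ is monotone increasing,
\[
\Var_\lambda(f) = \Var_\lambda(f_*\mu_X) = \int_{\R\times\R}\lambda(|g(t)-g(t')|)\,d\nu(t)\,d\nu(t')\le\Var_\lambda(\nu),
\]
and taking the supremum over all 1-Lipschitz $f$ yields $\ObsVar_\lambda(X)\le\Var_\lambda(\nu)$.
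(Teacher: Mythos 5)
Your candidate $g = Q\circ V$ is exactly the function the paper works with (their $G = \tilde F\circ V$), and the pointwise chain of inequalities you derive is the right starting point. But the step you yourself flag as ``the delicate point'' is a genuine gap, and the way you propose to close it will not work as stated. Two distinct difficulties hide there. First, the hypothesis $\IC(\nu)$ is only an $\cL^1$-a.e.\ statement in the variable $t$ (equivalently in $u=V(t)$), and you want to use it at $u=F(s)$ for a.e.\ $s$; since $F$ may carry a set of positive $\cL^1$-measure into the exceptional $\cL^1$-null set, you cannot simply ``compose with $F$'' and keep an a.e.\ conclusion. The paper handles this by replacing $V'$ with a carefully chosen version $\rho$ of the density of $\nu$ (in the proof of Theorem \ref{thm:profileToLevy}), defined so that $I_X\circ V\geq\rho$ holds \emph{everywhere} on $V^{-1}(\Image\mu_X)$, not merely a.e. Second, your assertion that the pointwise a.e.\ lower bound $F'\geq V'\circ V^{-1}\circ F>0$ ``should rule out a singular part of $dF$'' is false: an a.e.\ positive lower bound on the Radon--Nikodym derivative says nothing about a singular continuous part, and the real issue is not the singular part of $F$ anyway (monotonicity means a singular part of $\psi=V^{-1}\circ F$ only helps), but the failure of the chain rule for compositions of non-Lipschitz monotone functions, so $\psi'\geq 1$ a.e.\ cannot be read off from $F'\geq\rho\circ\psi$ by formal differentiation.

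The paper circumvents both problems by an integral argument rather than a pointwise chain rule. The crucial tool is Lemma \ref{lem:integral}: for any monotone nondecreasing $g$ and nonnegative Borel $f$, $\int_{g^{-1}(A)}(f\circ g)\,g'\,d\cL^1\leq\int_A f\,d\cL^1$. This one-sided change-of-variables inequality is exactly what absorbs the possible singular part and the chain-rule failure. The paper moreover does not argue directly with $F(s)=\mu_X(\{f\le s\})$; it first proves the intermediate L\'evy-type concentration condition $\ICL(\nu)$ from $\IC(\nu)$ (Theorem \ref{thm:profileToLevy}), working with the enlargement function $g(s)=\mu_X(B_s(A))$ for a Borel set $A$ and integrating the differential inequality via Lemma \ref{lem:integral} together with Lemma \ref{lem:meas_decomp}; and then, in a separate clean step (Theorem \ref{thm:LevyToDominant}), it deduces the $1$-Lipschitz bound on $G=\tilde F\circ V$ directly from $\ICL(\nu)$ by a two-line computation involving no derivatives or absolute-continuity considerations at all. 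Your direct route could in principle be rescued by importing both the $\rho$-trick and a Lemma \ref{lem:integral}-type inequality for $F$ itself, but without them the globalization step is missing, and the proof as written is incomplete precisely where you say it is.
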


We prove a stronger version of this theorem in \S\ref{sec:IC-dom}
(see Theorem \ref{thm:isoTFAE}).
A weaker version of the theorem (see Corollary \ref{cor:iso-dominant})
was stated by Gromov \cite{Gmv:isop-waist}*{\S 9.1.B}
without proof.

By Theorem \ref{thm:isop}, we have the first part of Theorem \ref{thm:isop-rigidity}.
To prove the rigidity part, we assume $\IC(\nu)$ for $X$
and $\ObsVar_\lambda(X) = \Var_\lambda(\nu)$.
Then, we are able to find a $1$-Lipschitz function $f : X \to \R$
such that
\[
\Var_\lambda(f) = \ObsVar_\lambda(X) = \Var_\lambda(\nu).
\]
The push-forward measure $f_*\mu_X$ coincides with $\nu$ up to an isometry of $\R$.
Then Theorem \ref{thm:isop-rigidity} follows from the following.

\begin{thm} \label{thm:max-distr-simple}
  Let $X$ be a geodesic mm-space with fully supported probability measure.
  If there exists a $1$-Lipschitz function $f : X \to \R$ such that
  $f_*\mu_X$ is a dominant of $X$, then
  we have at least one of {\rm(1)}, {\rm(2)}, and {\rm(3)} of
  Theorem {\rm\ref{thm:isop-rigidity}}.
\end{thm}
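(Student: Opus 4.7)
The plan is to analyze the support $J := \supp(f_*\mu_X) \subseteq \R$ and to show that the shape of $J$ determines which of (1), (2), (3) holds. Heuristically, since $f$ itself is a $1$-Lipschitz function whose distribution is the dominant, $f$ is the ``most spread-out'' $1$-Lipschitz function on $X$, and the dominance property should force every point of $X$ to lie on a geodesic along which $f$ increases at unit speed.

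First I would establish two preliminary facts. (a) $J$ is an interval and $\diam X = \diam J$. The equality of diameters follows by applying dominance to the $1$-Lipschitz function $\tfrac12 (d_X(p,\cdot) - d_X(q,\cdot))$ for $p,q \in X$ with $d_X(p,q)$ nearly maximal; this function has range of length $d_X(p,q)$, so its distribution must sit inside a Lipschitz image of $J$, of diameter at most $\diam J$. Connectedness of $J$ follows from a similar argument applied to a cut-off of $f$. (b) Each finite endpoint of $J$ is attained by $f$: if $a := \inf J > -\infty$ then there is $p \in X$ with $f(p) = a$, and symmetrically at $b := \sup J$ when finite. This uses full support of $\mu_X$ together with completeness of $X$.

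The heart of the proof is the following rigidity claim: if $f(p) = a$ then $d_X(p,y) = f(y) - a$ for every $y \in X$. To prove it, consider $g(y) := d_X(p,y) + a$; this is $1$-Lipschitz, satisfies $g \ge f$ pointwise (by the $1$-Lipschitz property of $f$ together with $f(p) = a$), and $g(p) = a = f(p)$. Dominance gives $g_*\mu_X = h_*\nu$ for some $1$-Lipschitz $h : \R \to \R$. Combining $g \ge f$ (which forces $g_*\mu_X$ to stochastically dominate $\nu$) with the fact that $g$ attains the value $a = \inf J$ at a point in the support of $\mu_X$, plus the $1$-Lipschitz constraint on $h$, one argues that $h$ must preserve $\nu$, so that $g_*\mu_X = \nu$. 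The classical rigidity of stochastic ordering---$g \ge f$ pointwise and identical distributions imply $g = f$ a.e.---then gives $g = f$ $\mu_X$-a.e., and continuity of $g,f$ together with full support of $\mu_X$ upgrades this to equality everywhere. Analogously, if $b < \infty$, a point $q$ with $f(q) = b$ satisfies $d_X(q, y) = b - f(y)$ for all $y$.

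The three structural conclusions now follow by assembly. In the compact case $J = [a,b]$, the two rigidities give $d_X(p,y) + d_X(q,y) = b - a = d_X(p,q)$ for every $y$, placing $y$ on a minimal geodesic from $p$ to $q$, which is (1). For $J = [a, \infty)$ only the pole $p$ exists; since $f$ is unbounded above on $X$, extending the minimal geodesic from $p$ through each $y$ in the $f$-increasing direction produces a ray from $p$ through $y$, which is (2). For $J = \R$ neither endpoint is present, and one substitutes Busemann-type limits along sequences $(y_n^\pm) \subset X$ with $f(y_n^\pm) \to \pm\infty$ for the missing poles; the rigidity step applied to these Busemann functions yields a straight line through each $y$, which is (3). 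The main obstacle is the rigidity step forcing $h$ to preserve $\nu$: dominance on its own allows nontrivial $1$-Lipschitz symmetries of $\nu$ (such as reflections of symmetric measures), so the argument must genuinely exploit the pointwise inequality $g \ge f$, the location of $a$ at the boundary of $\supp \nu$ at a point in the image of a full-support measure, and the Lipschitz constraint on $h$ simultaneously; the non-compact cases bring the additional technical difficulty of showing that the Busemann construction is compatible with the dominance framework and produces genuine straight lines.
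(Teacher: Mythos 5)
Your stochastic--dominance rigidity argument is genuinely different from the mechanism the paper uses (the paper instead perturbs $f$ locally to create a one--parameter family of $1$--Lipschitz functions whose distributions would all have to equal $f_*\mu_X$ up to isometry yet carry atoms at continuum--many distinct locations, contradicting that a probability measure has countably many atoms --- see Lemmas \ref{lem:exc} and \ref{lem:signed-dist}). Your mechanism, once the details about monotone rearrangement and separation distance are filled in, plausibly does yield the identity $d_X(p,y)=f(y)-\inf f$ when a minimizer $p$ exists. However, there are two concrete gaps.

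First, you assert that each finite endpoint of $J=\supp f_*\mu_X$ is attained ``using full support together with completeness,'' but these alone do not give attainment: a minimizing sequence need not have a convergent subsequence in a non--compact $X$. The paper's Lemma \ref{lem:minimizer} proves attainment by a genuinely nontrivial contradiction argument that again exploits dominance (a separated minimizing sequence lets one build competitor functions $g_n$ whose distributions would carry an atom of mass strictly exceeding $f_*\mu_X(\{b\})$). Without that step your case (1) and case (2) both fail to get started.

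Second, even granting the minimizer $p$, the identity $d_X(p,\cdot)=f(\cdot)-\inf f$ is strictly weaker than what is needed in cases (2) and (3). For (2) you need for every $x$ and every $L>f(x)$ the equality $d_X(x,f^{-1}(L))=L-f(x)$, i.e.\ that $x$ sits on a segment between $p$ and the level set $f^{-1}(L)$; the one--sided distance to $p$ does not imply this, and the extension step ``extending the minimal geodesic $\ldots$ produces a ray'' is precisely what you have not shown (minimal geodesics need not extend). The paper gets exactly this from Lemma \ref{lem:signed-dist}, which in turn rests on the atom argument. For (3) there is no pole at all, and your Busemann sketch is circular: Busemann functions require rays (or at least sequences escaping along controlled directions), which is exactly the structure one is trying to produce. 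The paper handles (3) by passing to a $1$--Lipschitz extension $\tilde f$ on an ambient geodesic space and applying Lemma \ref{lem:signed-dist} to finite sub-- and superlevel sets of $\tilde f$, which avoids any Busemann limit. So while your rigidity trick is a nice alternative for the symmetric bounded case, the proposal as written does not reach cases (2) and (3), and the attainment step is missing.
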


In fact, if $f$ is bounded, then we have (1).
If only one of $\inf f$ and $\sup f$ is finite, then we have (2).
If both of $\inf f$ and $\sup f$ are infinite, then we have (3).
The minimal geodesic foliation in Theorem \ref{thm:max-distr-simple}
is generated by the gradient vector field of $f$ (in the smooth case),
where the gradient vector field of $f$ is a unit vector field.
In addition, under the assumption of Theorem \ref{thm:isop-rigidity},
the function $f$ becomes an isoparametric function, i.e.,
the Laplacian of $f$ is constant on each level set of $f$.

A more general and minute version of Theorem \ref{thm:max-distr-simple}
for any mm-space is proved in \S\ref{sec:max-distr} (see Theorem \ref{thm:max-distr}).
A primitive version of Theorem \ref{thm:max-distr-simple}
was obtain by the first named author \cite{Nkj:1-meas}.

\begin{ack}
  The authors would like to thank Prof.~Shouhei Honda
  for his valuable comments.
\end{ack}

\section{Preliminaries}

In this section, we see some basics on mm-spaces.
We refer to \cite{Gmv:green,Sy:mmg} for more details.

\begin{defn}[mm-Space]
  Let $(X,d_X)$ be a complete separable metric space
  and $\mu_X$ a Borel probability measure on $X$.
  We call the triple $(X,d_X,\mu_X)$ an \emph{mm-space}.
  We sometimes say that $X$ is an mm-space, in which case
  the metric and the measure of $X$ are respectively indicated by
  $d_X$ and $\mu_X$.
\end{defn}

\begin{defn}[mm-Isomorphism]
  Two mm-spaces $X$ and $Y$ are said to be \emph{mm-isomorphic}
  to each other if there exists an isometry $f : \supp\mu_X \to \supp\mu_Y$
  such that $f_*\mu_X = \mu_Y$,
  where $f_*\mu_X$ is the push-forward of $\mu_X$ by $f$.
  Such an isometry $f$ is called an \emph{mm-isomorphism}.
\end{defn}

Any mm-isomorphism between mm-spaces is automatically surjective,
even if we do not assume it.
The mm-isomorphism relation is an equivalent relation between mm-spaces.

Note that $X$ is mm-isomorphic to $(\supp\mu_X,d_X,\mu_X)$.
\emph{We assume that an mm-space $X$ satisfies
\[
X = \supp\mu_X
\]
unless otherwise stated.}

\begin{defn}[Lipschitz order] \label{defn:dom}
  Let $X$ and $Y$ be two mm-spaces.
  We say that $X$ (\emph{Lipschitz}) \emph{dominates} $Y$
  and write $Y \prec X$ if
  there exists a $1$-Lipschitz map $f : X \to Y$ satisfying
  \[
  f_*\mu_X = \mu_Y.
  \]
  We call the relation $\prec$ the \emph{Lipschitz order}.
\end{defn}

The Lipschitz order $\prec$ is a partial order relation
on the set of mm-isomorphism classes of mm-spaces.

%
%


%


\begin{defn}[Separation distance]
  Let $X$ be an mm-space.
  For any real numbers $\kappa_0,\kappa_1,\cdots,\kappa_N > 0$
  with $N\geq 1$,
  we define the \emph{separation distance}
  \[
  \Sep(X;\kappa_0,\kappa_1, \cdots, \kappa_N)
  = \Sep(\mu_X;\kappa_0,\kappa_1, \cdots, \kappa_N)
  \]
  of $X$ as the supremum of $\min_{i\neq j} d_X(A_i,A_j)$
  over all sequences of $N+1$ Borel subsets $A_0,A_2, \cdots, A_N \subset X$
  satisfying that $\mu_X(A_i) \geq \kappa_i$ for all $i=0,1,\cdots,N$,
  where $d_X(A_i,A_j) := \inf_{x\in A_i,y\in A_j} d_X(x,y)$.
  If $\kappa_i > 1$ for some $i$, then
  we define
  \[
  \Sep(X;\kappa_0,\kappa_1, \cdots, \kappa_N) 
  = \Sep(\mu_X;\kappa_0,\kappa_1, \cdots, \kappa_N)
  := 0.
  \]
\end{defn}

We see that $\Sep(X;\kappa_0,\kappa_1, \cdots, \kappa_N)$ is
monotone nonincreasing in each $\kappa_i$,
and that $\Sep(X;\kappa_0,\kappa_1, \cdots, \kappa_N) = 0$
if $\sum_{i=0}^N \kappa_i > 1$.

\begin{lem} \label{lem:Sep-prec}
  Let $X$ and $Y$ be two mm-spaces.
  If $X$ is dominated by $Y$, then we have
  \[
  \Sep(X;\kappa_0,\dots,\kappa_N) \le \Sep(Y;\kappa_0,\dots,\kappa_N)
  \]
  for any real numbers $\kappa_0,\dots,\kappa_N > 0$.
\end{lem}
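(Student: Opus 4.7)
The plan is to pull back a separating family in $X$ to $Y$ via the dominating map, and verify that the pull-back family witnesses at least as large a separation distance in $Y$.

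First I would dispense with the trivial case: if some $\kappa_i > 1$, both sides of the claimed inequality are $0$ by definition, so the inequality holds. Assume now $\kappa_i \le 1$ for all $i$. By the definition of Lipschitz order, fix a $1$-Lipschitz map $f : Y \to X$ with $f_*\mu_Y = \mu_X$.

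Next, given any Borel sets $A_0, A_1, \dots, A_N \subset X$ with $\mu_X(A_i) \ge \kappa_i$, set $B_i := f^{-1}(A_i) \subset Y$. These are Borel since $f$ is continuous, and
\[
\mu_Y(B_i) = \mu_Y(f^{-1}(A_i)) = f_*\mu_Y(A_i) = \mu_X(A_i) \ge \kappa_i,
\]
so the family $(B_i)$ is admissible in the definition of $\Sep(Y;\kappa_0,\dots,\kappa_N)$. The key point is that, for any $y \in B_i$ and $y' \in B_j$ with $i \ne j$, since $f(y) \in A_i$ and $f(y') \in A_j$ and $f$ is $1$-Lipschitz,
\[
d_X(A_i,A_j) \le d_X(f(y),f(y')) \le d_Y(y,y').
\]
Taking the infimum over $y \in B_i$, $y' \in B_j$ yields $d_X(A_i,A_j) \le d_Y(B_i,B_j)$, and then the minimum over $i \ne j$ gives
\[
\min_{i\ne j} d_X(A_i,A_j) \le \min_{i\ne j} d_Y(B_i,B_j) \le \Sep(Y;\kappa_0,\dots,\kappa_N).
\]

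Finally, taking the supremum of the left-hand side over all admissible families $(A_i)$ in $X$ yields $\Sep(X;\kappa_0,\dots,\kappa_N) \le \Sep(Y;\kappa_0,\dots,\kappa_N)$, as required. There is no real obstacle here; the only thing to watch is measurability of the pull-backs (which comes for free from continuity of $f$) and the correct handling of the degenerate case $\kappa_i > 1$.
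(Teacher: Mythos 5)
Your proof is correct and is the standard argument (the paper states the lemma without proof, deferring to the references). You correctly unwind the definition — ``$X$ dominated by $Y$'' means there is a $1$-Lipschitz $f\colon Y\to X$ with $f_*\mu_Y=\mu_X$ — pull back an admissible family in $X$ to one in $Y$, and use $1$-Lipschitz continuity to show the pullback family has at least as large pairwise distances; the nonemptiness of each $B_i$ needed for $d_Y(B_i,B_j)$ to be a genuine infimum is guaranteed by $\mu_Y(B_i)\ge\kappa_i>0$, which you implicitly have.
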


\begin{defn} \label{defn:tpm}
  For a Borel probability measure on $\R$ and a real number $\alpha$,
  we define
  \begin{align*}
  t_+(\nu;\alpha) &:= \sup\{\; t \in \R \mid \nu([\,t,+\infty\,)) \ge \alpha\;\},\\
  t_-(\nu;\alpha) &:= \inf\{\; t \in \R \mid \nu((\,-\infty,t\,]) \ge \alpha\;\}.
\end{align*}
\end{defn}

We see that
$\nu([\,t_+(\nu;\alpha),+\infty\,)) \ge \alpha$ and
$\nu((\,-\infty,t_-(\nu;\alpha)\,]) \ge \alpha$.
For any $\kappa_0, \kappa_1 > 0$ with $\kappa_0 + \kappa_1 \le 1$, 
we have
\[
\Sep(\nu;\kappa_0,\kappa_1) = t_+(\nu;\kappa_0) - t_-(\nu;\kappa_1).
\]



%
\section{Isoperimetric Comparison and Domination of Measures} \label{sec:IC-dom}

Let $X$ be an mm-space and $\nu$ a Borel probability measure on $\R$.

\begin{defn}[Isoperimetric comparison condition of L{\'e}vy type]
  We say that $X$ satisfies the \emph{isoperimetric comparison condition of L{\'e}vy type}
  $\ICL(\nu)$
  if for any real numbers $a, b \in \supp\nu$ with $a\leq b$ and
  for any Borel set $A\subset X$ with $\mu_X(A) > 0$ we have
  \[
  V(a)\leq\mu_X(A)\Rightarrow V(b)\leq\mu_X(B_{b-a}(A)),
  \]
  where $V$ is the cumulative distribution function of $\nu$.
\end{defn}

\begin{rem}\label{rem:isoLevy}
  In the definition of $\ICL(\nu)$, the condition is equivalent
  if we restrict $A$ to be any closed set in $X$ with $\mu_X(A) > 0$.
\end{rem}

Recall that a dominant of $X$ is a Borel probability measure on $\R$
that dominates the distribution of any $1$-Lipschitz function on $X$.

\begin{defn}[Iso-dominant]
  A Borel probability measure $\nu$ is called an \emph{iso-dominant of $X$}
  if for any $1$-Lipschitz function $f : X \to \R$ there exists
  a monotone nondecreasing function $h : X \to \R$
  such that $f_*\mu_X = h_*\nu$.
\end{defn}

Any iso-dominant of $X$ is a dominant of $X$.

The purpose of this section is to prove the following theorem,
which is stronger than Theorem \ref{thm:isop}.

\begin{thm}\label{thm:isoTFAE}
  Let $X$ be an essentially connected geodesic mm-space
  and let $\nu \in \cV$.  Then the following {\rm(1)}, {\rm(2)}, and {\rm(3)} are equivalent to each other.
\begin{enumerate}
\item $\nu$ is an iso-dominant of $X$.
\item $X$ satisfies $\ICL(\nu)$.
\item $X$ satisfies $\IC(\nu)$.
\end{enumerate}
\end{thm}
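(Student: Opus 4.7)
The plan is to establish the cycle $(3) \Rightarrow (2) \Rightarrow (1) \Rightarrow (3)$. Throughout, $V$ denotes the CDF of $\nu$, which is continuous and strictly increasing on $\supp\nu$ since $\nu \in \cV$. For $(3) \Rightarrow (2)$, fix a closed $A \subset X$ with $\mu_X(A) \ge V(a)$ for some $a \in \supp\nu$ and study the nondecreasing function $u(t) := \mu_X(B_t(A))$. Standard manipulation of $\mu_X^+$ gives $u'(t) \ge \mu_X^+(B_t(A))$ for $\cL^1$-a.e.\ $t$, and $\IC(\nu)$ then forces $u'(t) \ge V'(V^{-1}(u(t)))$ whenever $u(t) \in \Image\mu_X$. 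Setting $w(t) := V^{-1}(u(t))$ and applying the chain rule formally gives $w'(t) \ge 1$ a.e., so $w(t) \ge w(0) + t \ge a + t$ and hence $u(t) \ge V(a+t)$, which is the Lévy comparison of $\ICL(\nu)$.

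For $(2) \Rightarrow (1)$, given a 1-Lipschitz $f : X \to \R$ with CDF $F$ of $f_*\mu_X$, define the monotone rearrangement
\[
h(s) := \inf\{\,t \in \R \mid F(t) \ge V(s)\,\};
\]
the identity $h_*\nu = f_*\mu_X$ is the standard quantile identity, using continuity of $V$. To verify 1-Lipschitz continuity on $\supp\nu$, fix $s_1 < s_2$ in $\supp\nu$, let $t_1 := h(s_1)$ and $A := \{f \le t_1\}$; right-continuity of $F$ gives $\mu_X(A) = F(t_1) \ge V(s_1)$, so $\ICL(\nu)$ yields $\mu_X(B_{s_2-s_1}(A)) \ge V(s_2)$. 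Since $f$ is 1-Lipschitz, $B_{s_2-s_1}(A) \subset \{f \le t_1 + (s_2 - s_1)\}$, and hence $F(t_1 + (s_2 - s_1)) \ge V(s_2)$, giving $h(s_2) \le h(s_1) + (s_2 - s_1)$.

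For $(1) \Rightarrow (3)$, fix $a \in \supp\nu$ with $V(a) \in \Image\mu_X$ and a Borel $A$ with $\mu_X(A) = V(a)$. Apply iso-dominance to the 1-Lipschitz distance function $f(x) := d_X(x, A)$ and let $F$ be the CDF of $f_*\mu_X$. Since iso-dominance implies dominance, and for $\nu \in \cV$ the monotone rearrangement between probability measures on $\R$ is unique, the given monotone $h$ must be 1-Lipschitz. An argument dual to that of $(2) \Rightarrow (1)$ converts this into the implication $F(t) \ge V(s) \Rightarrow F(t + r) \ge V(s + r)$ for $r \ge 0$. Applying this at $t = 0$, $s = a$, $r = \varepsilon - \delta$, using $F(0) \ge \mu_X(A) = V(a)$ and continuity of $V$, yields
\[
\mu_X(U_\varepsilon(A)) = \lim_{\delta \to 0^+} F(\varepsilon - \delta) \ge V(a + \varepsilon);
\]
dividing by $\varepsilon$ and taking $\limsup$ gives $\mu_X^+(A) \ge V'(a)$ wherever $V'(a)$ exists, which is $\IC(\nu)$.

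The hardest step is $(3) \Rightarrow (2)$: the $\cL^1$-a.e.\ differential inequality of $\IC(\nu)$ must be upgraded to a pointwise Lévy-type comparison valid for every relevant $t$. This requires enough regularity of $u$ for its a.e.\ derivative to control the increment $u(t) - u(0)$ (ruling out singular growth), and it requires that the curve $t \mapsto V^{-1}(u(t))$ never gets trapped on the $\cL^1$-null exceptional set on which $\IC(\nu)$ is allowed to fail. Essential connectedness enters precisely here, forcing $\mu_X^+(B_t(A)) > 0$ whenever $\mu_X(B_t(A)) \in (0,1)$ and thereby preventing $u$ from stagnating on any interval, while the geodesic hypothesis governs how $u$ sweeps through $\Image\mu_X$.
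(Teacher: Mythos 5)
Your cyclic decomposition $(3)\Rightarrow(2)\Rightarrow(1)\Rightarrow(3)$ is the same circle of ideas the paper implements (the paper routes both directions through $\ICL(\nu)$, proving $(1)\Rightarrow(2)\Rightarrow(3)$ via Propositions \ref{prop:dominantToLevy} and \ref{prop:LevyToProfile} and $(3)\Rightarrow(2)\Rightarrow(1)$ via Theorems \ref{thm:profileToLevy} and \ref{thm:LevyToDominant}). Your $(2)\Rightarrow(1)$ is exactly the paper's quantile map $G=\tilde F\circ V$, and your $(1)\Rightarrow(3)$ is the paper's $(1)\Rightarrow(2)$ followed by $(2)\Rightarrow(3)$ compressed into one step: the sublevel set of the distance function, the derived implication $F(t)\ge V(s)\Rightarrow F(t+r)\ge V(s+r)$, and the passage to the derivative are all as in Proposition \ref{prop:dominantToLevy}. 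Both of those legs need routine care at the boundary of $\supp\nu$ (the paper splits the definition of $G$ at $t_0=\inf\supp\nu$, and you need $V(s_1)>0$ for $\ICL(\nu)$ to apply), but those are minor.

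The genuine gap is in $(3)\Rightarrow(2)$, precisely where you say "applying the chain rule formally." Writing $w(t)=V^{-1}(u(t))$ and asserting $w'\ge 1$ a.e.\ is not justified: for $\nu\in\cV$ the inverse CDF $V^{-1}$ need not be absolutely continuous (if the density of $\nu$ vanishes on a fat Cantor set in $(\supp\nu)^\circ$, $V^{-1}$ acquires a singular part), so the set of $t$ at which $(V^{-1})'(u(t))$ exists may fail to have full measure, and on top of that $u$ can spend positive $t$-time with $u(t)$ in the $\cL^1$-null exceptional set where $\IC(\nu)$ is permitted to fail. You name both concerns in your closing paragraph but do not supply a mechanism to resolve them. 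The paper does so with Lemma \ref{lem:integral}, the one-sided change-of-variables inequality $\int_{g^{-1}(A)}(f\circ g)\cdot g'\,d\cL^1\le\int_A f\,d\cL^1$ for monotone $g$, applied inside the chain of inequalities in Theorem \ref{thm:profileToLevy}; this avoids the chain rule altogether by transferring the $t$-integral to an integral against $\nu$, where $\rho=V'$ is unambiguous. The paper also defines the density $\rho$ so that $\rho\le I_X\circ V$ holds \emph{everywhere} on $V^{-1}(\Image\mu_X)$, not merely a.e., which is exactly what prevents the exceptional set from interfering. Without an ingredient of this kind, the formal chain-rule step does not close, so $(3)\Rightarrow(2)$ is not proved as written.
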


We need several statements for the proof of Theorem \ref{thm:isoTFAE}.

\begin{prop}\label{prop:dominateToProfile}
  Let $X$ and $Y$ be mm-spaces
  such that $X$ dominates $Y$.
  Then we have
  \[
  \Image\mu_Y\subset \Image\mu_X
  \quad\text{and}\quad
  I_X \leq I_Y \ \text{on $\Image\mu_Y$}.
  \]
  In particular, if $X$ satisfies $\IC(\nu)$ for a Borel probability measure $\nu$ on $\R$,
  then $Y$ also satisfies $\IC(\nu)$.
\end{prop}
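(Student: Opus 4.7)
The plan is to exploit the $1$-Lipschitz map $f \colon X \to Y$ provided by the domination $Y \prec X$, which by definition satisfies $f_*\mu_X = \mu_Y$, to transport isoperimetric test sets from $Y$ back to $X$ by taking preimages. The core idea is that for a Borel set $B \subset Y$, the preimage $f^{-1}(B)$ is a test set in $X$ with the same measure and no larger boundary measure, so it witnesses an upper bound for $I_X$ at $v = \mu_Y(B)$.

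First I would verify $\Image \mu_Y \subset \Image \mu_X$: for any Borel $B \subset Y$, the set $f^{-1}(B)$ is Borel since $f$ is continuous, and $\mu_X(f^{-1}(B)) = f_*\mu_X(B) = \mu_Y(B)$. Next I would establish the crucial inclusion
\[
U_\varepsilon(f^{-1}(B)) \subset f^{-1}(U_\varepsilon(B))
\]
for every $\varepsilon > 0$, which follows directly from the $1$-Lipschitz property of $f$: if $d_X(x,y) < \varepsilon$ for some $y \in f^{-1}(B)$, then $d_Y(f(x),f(y)) < \varepsilon$ with $f(y) \in B$, forcing $f(x) \in U_\varepsilon(B)$.

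Applying $\mu_X$ to this inclusion and using $f_*\mu_X = \mu_Y$ gives $\mu_X(U_\varepsilon(f^{-1}(B))) \le \mu_Y(U_\varepsilon(B))$. Subtracting $\mu_X(f^{-1}(B)) = \mu_Y(B)$ on both sides, dividing by $\varepsilon$, and taking $\limsup_{\varepsilon \to 0+}$ yields $\mu_X^+(f^{-1}(B)) \le \mu_Y^+(B)$. For $v \in \Image \mu_Y$, since $f^{-1}(B)$ is a valid competitor in the definition of $I_X(v)$, we get $I_X(v) \le \mu_X^+(f^{-1}(B)) \le \mu_Y^+(B)$, and taking the infimum over Borel $B \subset Y$ with $\mu_Y(B) = v$ produces $I_X(v) \le I_Y(v)$.

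The final assertion about $\IC(\nu)$ is then immediate by composition: assuming $I_X \circ V \ge V'$ $\mathcal{L}^1$-a.e. on $V^{-1}(\Image \mu_X)$, and noting $V^{-1}(\Image \mu_Y) \subset V^{-1}(\Image \mu_X)$, the same a.e.\ inequality holds on the smaller set, where moreover $I_Y \circ V \ge I_X \circ V$ by the previous step. I do not anticipate a serious obstacle; the only mild care needed is the passage from the set inclusion to the boundary measure inequality via monotonicity of $\limsup$, and the observation that $f^{-1}$ preserves the Borel $\sigma$-algebra because $f$ is continuous.
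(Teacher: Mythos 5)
Your proof is correct and follows essentially the same approach as the paper's: both arguments pull back isoperimetric test sets via the $1$-Lipschitz map $f$, establish the inclusion $U_\varepsilon(f^{-1}(B)) \subset f^{-1}(U_\varepsilon(B))$, and deduce $\mu_X^+(f^{-1}(B)) \le \mu_Y^+(B)$ before infimizing. Your treatment of the final $\IC(\nu)$ assertion is slightly more explicit than the paper's ``the rest is easy,'' but the core argument is identical.
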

\begin{proof}
  Since $X$ dominates $Y$, there is a 1-Lipschitz map $f : X \to Y$
  such that $f_*\mu_X=\mu_Y$.
  For any Borel set $A\subset Y$, we see
  $f^{-1}(B_\varepsilon(A)) \supset B_\varepsilon(f^{-1}(A))$
  by the $1$-Lipschitz continuity of $f$, and so
  \begin{align*}
    \mu_Y^+(A) &= \limsup_{\varepsilon\to+0}
    \frac{ \mu_Y(B_\varepsilon(A))-\mu_Y(A)}\varepsilon\\
    &\geq \limsup_{\varepsilon\to+0}
    \frac{ \mu_X(B_\varepsilon(f^{-1}(A)))-\mu_X(f^{-1}(A)) }\varepsilon\\
    &=\mu_X^+(f^{-1}(A)),
  \end{align*}
  which implies that, for any $v\in\Image\mu_Y$,
  \begin{align*}
    I_Y(v)&=\inf_{\mu_Y(A)=v}\mu_Y^+(A)
    \geq\inf_{\mu_X(f^{-1}(A))=v}\mu_X^+(f^{-1}(A)) \ge I_X(v).
  \end{align*}
  The rest is easy.
  This completes the proof.
\end{proof}

Using Proposition \ref{prop:dominateToProfile} we prove the following.

\begin{prop}[Gromov {\cite[\S 9]{Gmv:isop-waist}}]\label{prop:dominantToProfile}
  If $\nu$ is a dominant of a geodesic mm-space $X$, then
  \[
  \Image\mu_X\subset\Image\nu
  \quad\text{and}\quad
  I_\nu \leq I_X \ \text{on $\Image\mu_X$},
  \]
  where $I_\nu$ is the isoperimetric profile of $(\R,\nu)$.
\end{prop}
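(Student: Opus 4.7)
The plan is to test the isoperimetric condition against the $1$-Lipschitz distance functions $f(\cdot)=d_X(\cdot,A)$, push the measure forward to $\R$, and transfer the bound via Proposition~\ref{prop:dominateToProfile} applied on $\R$.

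Fix $v\in\Image\mu_X$ and a Borel set $A\subset X$ with $\mu_X(A)=v$. Set $f(x):=d_X(x,A)$ and $\mu:=f_*\mu_X$. Since $\nu$ is a dominant of $X$, there is a $1$-Lipschitz $h:\R\to\R$ with $h_*\nu=\mu$; in particular $(\R,\nu)$ Lipschitz-dominates $(\R,\mu)$, so Proposition~\ref{prop:dominateToProfile} applied on $\R$ gives $\Image\mu\subset\Image\nu$ and $I_\nu\leq I_\mu$ on $\Image\mu$.

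Now test the half-line $B_0:=(\,-\infty,0\,]\subset\R$ under $\mu$. Since $U_r(B_0)=(\,-\infty,r\,)$, we have $\mu(B_0)=\mu_X(\overline A)$ and $\mu(U_r(B_0))=\mu_X(U_r(A))$. If $\mu_X(\overline A)>v$, then $\overline A\subset U_r(A)$ for every $r>0$ forces $(\mu_X(U_r(A))-v)/r\geq(\mu_X(\overline A)-v)/r\to+\infty$, whence $\mu_X^+(A)=+\infty$ and the desired inequality is trivial. Otherwise $\mu_X(\overline A)=v$, so $\mu(B_0)=v$, and
\[
\mu^+(B_0)=\limsup_{r\to 0+}\frac{\mu_X(U_r(A))-v}{r}=\mu_X^+(A).
\]
Combining with $\mu^+(B_0)\geq I_\mu(v)\geq I_\nu(v)$ and taking the infimum over $A$ yields $I_X\geq I_\nu$ on $\Image\mu_X$. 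The inclusion $\Image\mu_X\subset\Image\nu$ follows from the same construction after noting that inner regularity of $\mu_X$ on the Polish space $X$ allows each $v\in\Image\mu_X$ to be realized by a closed set, in which case $v=\mu(B_0)\in\Image\mu\subset\Image\nu$.

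The principal subtlety is the closure-versus-measure mismatch: the push-forward $\mu$ naturally records $\mu_X(\overline A)$, whereas the isoperimetric profile involves $\mu_X(A)$. The key observation that a gap $\mu_X(\overline A)>\mu_X(A)$ automatically forces $\mu_X^+(A)=+\infty$ bypasses this cleanly. The geodesic hypothesis does not visibly enter the computation above, though it may be implicit in aligning with Gromov's framework in \cite{Gmv:isop-waist}.
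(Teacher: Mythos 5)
Your proposal is correct and follows essentially the same route as the paper's proof: test against the $1$-Lipschitz distance function from a competitor set $A$, push the measure forward to $\R$, and transfer the bound through Proposition~\ref{prop:dominateToProfile} applied to $(\R,\nu)$ dominating $(\R,f_*\mu_X)$. Two small remarks. First, the case $v=0$ should be set aside at the outset (as the paper does): the set $A$ may be empty, in which case $d_X(\cdot,A)$ is not real-valued. Second, the appeal to inner regularity to produce a closed set of measure exactly $v$ is a little informal as stated; the cleaner justification, which is already implicit in your own case split and in the paper's choice of a closed $A$, is that whenever $\mu_X^+(A)<\infty$ one necessarily has $\mu_X(\overline{A})=\mu_X(A)$, so $\overline{A}$ is a closed competitor of the same measure, and when $\mu_X^+(A)=+\infty$ the estimate is vacuous. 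Using the unsigned distance $d_X(\cdot,A)$ rather than the paper's signed distance makes no difference to the computation, since only the sublevel sets of $f$ near $0^+$ enter.
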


\begin{proof}
  We take any real number $v \in \Image\mu_X$ and fix it.
  If $v = 0$, then it is obvious that $v \in \Image\nu$ and $I_\nu(v) = 0 = I_X(v)$.
  Assume $v > 0$.
  For any $\varepsilon>0$ there is a closed set $A\subset X$
  such that $\mu_X(A)=v$ and $\mu_X^+(A)<I_X(v)+\varepsilon$.
  Note that $A$ is nonempty because of $v > 0$.
  Define a function $f:X\to\R$ by
  \[
  f(x):=
  \begin{cases}
     d_X(x,A) & \text{if $x\in X \setminus A$},\\
     -d_X(x,X \setminus A) & \text{if $x\in A$}.
  \end{cases}
  \]
  Then $f$ is $1$-Lipschitz continuous.
  Since $f_*\mu_X((\,-\infty,0\,]) = \mu_X(A) = v$,
  we have
  \[
  I_{f_*\mu_X}(v)\leq (f_*\mu_X)^+((-\infty,0])=\mu_X^+(A)<I_X(v)+\varepsilon.
  \]
  Since $\nu$ dominates $f_*\mu_X$, 
  Proposition \ref{prop:dominateToProfile} implies that $v\in\Image\nu$
  and $I_\nu(v)\leq I_{f_*\mu_X}(v)$.
  We therefore have $I_\nu(v)<I_X(v)+\varepsilon$.
  By the arbitrariness of $\varepsilon>0$, we obtain $I_\nu(v)\leq I_X(v)$.
  This completes the proof.
\end{proof}

\begin{prop}\label{prop:dominantToLevy}
  Let $X$ be a geodesic mm-space and $\nu$ a Borel probability measure on $\R$.
  If $\nu$ is an iso-dominant of $X$, then $X$ satisfies $\ICL(\nu)$.
\end{prop}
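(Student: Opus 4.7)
The plan is to apply the iso-dominance hypothesis to the signed distance function from an arbitrary closed set and then translate the implication $V(a)\le\mu_X(A)\Rightarrow V(b)\le\mu_X(B_{b-a}(A))$ into a Lipschitz comparison on $\R$.

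First, by Remark \ref{rem:isoLevy} it suffices to verify the implication for closed $A\subset X$ with $0<\mu_X(A)<1$. For such $A$, I would introduce the signed distance function $f:X\to\R$ defined by $f(x):=d_X(x,A)$ for $x\notin A$ and $f(x):=-d_X(x,X\setminus A)$ for $x\in A$; it is $1$-Lipschitz and satisfies $\{f\le 0\}=A$, with the sublevel sets $\{f\le r\}$ and $\{f<r\}$ being, for $r>0$, the closed and open $r$-neighbourhoods of $A$. Applying the iso-dominance hypothesis to $f$ then furnishes a monotone nondecreasing $1$-Lipschitz function $h:\R\to\R$ with $h_*\nu=f_*\mu_X$. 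Writing $F$ for the cumulative distribution function of $f_*\mu_X$, this gives $F(0)=\mu_X(A)$ and, more generally, $F(t)=\nu(\{h\le t\})$ for every $t\in\R$.

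The core of the argument is a short quantile comparison. The assumption $V(a)\le\mu_X(A)=F(0)=\nu(\{h\le 0\})$, together with the fact that $\{h\le 0\}$ is downward-closed in $\R$ (by monotonicity of $h$) and $a\in\supp\nu$, forces $h(a)\le 0$ via a straightforward case analysis on $c:=\sup\{s:h(s)\le 0\}$: if $c>a$ monotonicity gives $h(a)\le 0$ directly, and the cases $c\le a$ are ruled out (or reduce to the endpoint situation $a=\inf\supp\nu$ in which $h(a)\le 0$ still holds because $\nu(\{h\le 0\})>0$ compels $\{h\le 0\}$ to meet $\supp\nu$). Once $h(a)\le 0$ is established, the $1$-Lipschitz continuity of $h$ yields $h(s)\le h(a)+(s-a)\le b-a$ for every $s\le b$, with strict inequality when $s<b$. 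Consequently $(-\infty,b]\subseteq\{h\le b-a\}$ and $(-\infty,b)\subseteq\{h<b-a\}$, so $F(b-a)=\nu(\{h\le b-a\})\ge V(b)$, which translates back to $\mu_X(B_{b-a}(A))\ge V(b)$, as required.

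The main obstacle is the iso-dominance step: the whole argument rests on producing a $1$-Lipschitz monotone rearrangement $h$ with $h_*\nu=f_*\mu_X$, which is precisely the content of iso-dominance as it is used here and is consistent with the paper's observation that any iso-dominant is a dominant. The remaining subtleties — open versus closed $r$-neighbourhood conventions for $B_r(A)$, the behaviour of $h$ at atoms of $\nu$ or at endpoints of $\supp\nu$, and the trivial edge case $b=a$ — are routine and do not affect the main line of argument.
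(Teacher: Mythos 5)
Your proposal is correct and follows essentially the same strategy as the paper's proof: apply iso-dominance to the signed distance function of $A$ to obtain a monotone nondecreasing $1$-Lipschitz rearrangement $h$ (the paper's $g$), deduce $h(a)\le 0$ (which is exactly the paper's step $a\le a'$ with $a'=\sup h^{-1}((-\infty,0])$), and then use the Lipschitz bound $h(b)\le b-a$ to conclude $V(b)\le\nu(\{h\le b-a\})=F(b-a)=\mu_X(B_{b-a}(A))$. The only cosmetic differences are notational (your $c$ is the paper's $a'$; the paper also introduces $b'$ to phrase the final step), and your case analysis for $h(a)\le 0$ is a slightly more verbal rendering of the same quantile comparison the paper uses.
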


\begin{proof}
  Assume that $\nu$ is an iso-dominant of $X$.
  We take any real numbers $a,b \in \supp\nu$ with $a \leq b$
  and any nonempty closed set $A \subset X$
  in such a way that $V(a) \leq \mu_X(A)$,
  where $V$ is the cumulative distribution function of $\nu$.
  Define a function $f : X \to \R$ by
  \[
  f(x):=\begin{cases}
    d_X(x,A) & \text{if $x\in X \setminus A$},\\
    -d_X(x,X \setminus A) & \text{if $x\in A$}
  \end{cases}
  \]
  for $x \in X$.
  Since $\nu$ is an iso-dominant of $X$,
  there is a monotone nondecreasing $1$-Lipschitz function
  $g : \R \to \R$ such that
  \[
  f_*\mu_X=g_*\nu
  \]
  We set
  \[
  a' := \sup g^{-1}((\,-\infty,0\,])
  \quad\text{and}\quad
  b'=\sup g^{-1}((\,-\infty,b-a\,]).
  \]
  The continuity and monotonicity of $g$ implies that
  \[
  g_*\nu((-\infty,0])=V(a')
  \quad\text{and}\quad
  g_*\nu((-\infty,b-a])=V(b'),
  \]
  which are true even if $a'$ and/or $b'$ are infinity.
  Since
  \[
  V(a) \leq \mu_X(A) = f_*\mu_X((\,-\infty,0\,]) = g_*\nu((\,-\infty,0\,]) = V(a')
  \]
  we have $a \leq a'$.
  By the monotonicity and the $1$-Lipschitz continuity of $g$,
  \[
  g(b) \leq g(a'+b-a) \leq g(a')+b-a \leq b-a,
  \]
  which implies $b\leq b'$ and therefore,
  \begin{align*}
    V(b)&\leq V(b')
    =g_*\nu((-\infty,b-a])\\
    &=f_*\mu_X((-\infty,b-a]) = \mu_X(B_{b-a}(A)).
  \end{align*}
  This completes the proof.
\end{proof}

\begin{prop}\label{prop:LevyToProfile}
  Let $X$ be an mm-space
  and $\nu$ a Borel probability measure on $\R$.
  If $X$ satisfies $\ICL(\nu)$, then $X$ satisfies $\IC(\nu)$.
\end{prop}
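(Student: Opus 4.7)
The plan is to show that at $\mathcal L^1$-almost every $t \in V^{-1}(\Image\mu_X)$ the inequality $I_X(V(t)) \geq V'(t)$ holds. Since $V$ is nondecreasing, $V'(t)$ exists $\mathcal L^1$-a.e., so I restrict to such $t$. Wherever $V'(t)=0$ the inequality is automatic, so I focus on $t$ with $V'(t)>0$; note that any such $t$ must lie in $\supp\nu$, because otherwise a small neighborhood of $t$ would be $\nu$-null and $V$ locally constant there.

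Fix such a $t$, set $v := V(t)$, and take any Borel $A \subset X$ with $\mu_X(A) = v$. My goal is $\mu_X^+(A) \geq V'(t)$. The obstruction to applying $\ICL(\nu)$ directly with $b = t+\varepsilon$ is that $t+\varepsilon$ need not lie in $\supp\nu$. To bypass this I would set
\[
b_\varepsilon := \sup\bigl(\supp\nu \cap [\,t,t+\varepsilon\,]\bigr),
\]
which belongs to $\supp\nu$ (since $\supp\nu$ is closed and contains $t$) and satisfies $b_\varepsilon \leq t+\varepsilon$. Because $\nu$ assigns zero mass to $(\,b_\varepsilon,t+\varepsilon\,]$ by the choice of $b_\varepsilon$, one has $V(b_\varepsilon) = V(t+\varepsilon)$. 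Applying $\ICL(\nu)$ with $a=t$, $b=b_\varepsilon$, and the set $A$ (noting that $V(t)=\mu_X(A)>0$) yields
\[
V(t+\varepsilon) \;=\; V(b_\varepsilon) \;\leq\; \mu_X\bigl(B_{b_\varepsilon-t}(A)\bigr) \;\leq\; \mu_X\bigl(B_\varepsilon(A)\bigr),
\]
where the last step uses monotonicity of the neighborhood in its radius.

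Subtracting $\mu_X(A) = V(t)$, dividing by $\varepsilon$, and taking $\limsup$ as $\varepsilon \to 0^+$ then gives $\mu_X^+(A) \geq V'(t)$. Since $A$ was arbitrary among Borel sets of measure $v$, this yields $I_X(V(t)) \geq V'(t)$, i.e., $\IC(\nu)$. The boundary cases $V(t) \in \{0,1\}$ are harmless: on the corresponding level sets $V$ is constant, so $V' = 0$ $\mathcal L^1$-a.e.\ there and the inequality is trivial.

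The only real obstacle is the construction of $b_\varepsilon \in \supp\nu$; once that is in place, the computation is routine. A minor technical point is whether $B_r$ in the definition of $\ICL(\nu)$ denotes the open or closed $r$-neighborhood. In the closed case one estimates $\bar B_\varepsilon(A) \subset U_{\varepsilon+\delta}(A)$ and lets $\delta\to 0$ before $\varepsilon\to 0$; because $V'(t)$ exists as a genuine derivative, this perturbation does not affect the limit.
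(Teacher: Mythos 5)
Your argument is correct and follows essentially the same route as the paper: apply $\ICL(\nu)$ to pull out the (right) derivative of $V$ along a sequence of radii for which the right endpoint can be forced into $\supp\nu$. The only cosmetic difference is that you define $b_\varepsilon := \sup(\supp\nu\cap[\,t,t+\varepsilon\,])$, whereas the paper chooses a sequence $\varepsilon_i\to 0$ with $t+\varepsilon_i\in\supp\nu$ (disposing separately of the case where $(\,t,t+\varepsilon_0\,)$ misses $\supp\nu$, which in your setup is absorbed by the restriction to $V'(t)>0$); both devices lead to the same limsup computation.
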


\begin{proof}
Assume $\ICL(\nu)$ for $X$.
It suffices to prove
\begin{equation} \label{eq:IC}
I_X \circ V(t) \ge V'(t)
\end{equation}
for $\cL^1$-a.e.~$t\in V^{-1}(\Image\mu_X)$.
We note that $V'(t)$ exists for $\cL^1$-a.e.~$t\in V^{-1}(\Image\mu_X)$.
If $t$ is not contained in $\supp\nu$, then \eqref{eq:IC} is clear because of $V'(t) = 0$.
Assume $t \in \supp\nu$.
If $(\,t,t+\varepsilon_0\,)$ does not intersect $\supp\nu$ for some $\varepsilon_0 > 0$,
then $V'(t) = 0$ if any, and we have \eqref{eq:IC}.
If otherwise, there is a sequence of positive real numbers $\varepsilon_i\to 0$
such that $t+\varepsilon_i$ is contained in $\supp\nu$.
Applying $\ICL(\nu)$ yields that $\mu_X(B_{\epsilon_i}(A)) \ge V(t+\varepsilon_i)$
for any Borel set $A \subset X$ with $\mu_X(A) = V(t)$.
We therefore have
\begin{align*}
I_X\circ V(t)&=\inf_{\mu_X(A)=V(t)}\mu_X^+(A)\\
&\ge \inf_{\mu_X(A)=V(t)}\limsup_{i \to \infty}
\frac{\mu_X(B_{\varepsilon_i}(A))-\mu_X(A)}{\varepsilon_i}\\
&\ge \lim_{i\to \infty}\frac{V(t+\varepsilon_i)-V(t)}{\varepsilon_i},
\end{align*}
which is equal to $V'(t)$ if any.
This completes the proof.
\end{proof}

For a monotone nondecreasing and right-continuous function
$F : \R \to [\,0,1\,]$ with $\lim_{t\to -\infty}F(t)=0$,
we define a function $\tilde F : [\,0,1\,] \to \R$ by
\[
\tilde F(s) :=
\begin{cases}
  \inf \{\;t \in \R \mid s\leq F(t) \;\} & \text{if $s \in (\,0,1\,]$},\\
  c & \text{if $s=0$}
\end{cases}
\]
for $s \in [\,0,1\,]$, where $c$ is a constant.

\begin{lem}\label{lem:tilde}
  For any $F$ as above, we have the following {\rm(1)}, {\rm(2)}, and {\rm(3)}.
  \begin{enumerate}
  \item\label{tilde_geq} $F\circ\tilde F(s)\geq s$ for any real number $s$ with $0 \le s 
  \le 1$.
  \item\label{tilde_leq} $\tilde{F} \circ F(t) \leq t$ for any real number $t$ with $F(t)>0$.
  \item\label{tilde_set} $F^{-1}((\,-\infty,t\,]) \setminus \{0\}=(\,0,F(t)\,]$ for any real number $t$.
  \end{enumerate}
\end{lem}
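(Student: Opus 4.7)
\textbf{Proof plan for Lemma \ref{lem:tilde}.}
The single ingredient driving all three parts is the observation that right-continuity of $F$ makes the infimum defining $\tilde F$ attained. Concretely, for $s\in(0,1]$ the superlevel set $S_s:=\{t\in\R\mid F(t)\ge s\}$ is, by monotonicity, an interval unbounded above, hence of the form $[a,\infty)$ or $(a,\infty)$; picking $t_n\downarrow a$ with $F(t_n)\ge s$ and applying right-continuity gives $F(a)=\lim_n F(t_n)\ge s$, so in fact $S_s=[\tilde F(s),\infty)$ whenever it is nonempty. I would state and prove this observation first, since it is what everything else rests on.

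Given this, part (1) is immediate: for $s=0$ the inequality $F\circ\tilde F(0)=F(c)\ge0=s$ is trivial, and for $s\in(0,1]$ we have $\tilde F(s)\in S_s$, i.e.\ $F(\tilde F(s))\ge s$. Part (2) is equally direct: if $F(t)>0$, then $F(t)\in(0,1]$ and $t\in S_{F(t)}$ by definition, so $\tilde F(F(t))=\inf S_{F(t)}\le t$.

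For part (3) — which I read as the statement about $\tilde F^{-1}((-\infty,t])\setminus\{0\}$, since the literal $F^{-1}((-\infty,t])$ lives in $\R$ rather than in $[0,1]$ and the intended identity only makes sense for $\tilde F^{-1}$ — I would prove the two inclusions separately. If $s\in(0,1]$ with $\tilde F(s)\le t$, then part (1) plus monotonicity gives $s\le F(\tilde F(s))\le F(t)$, so $s\in(0,F(t)]$. Conversely, if $0<s\le F(t)$, then $t\in S_s$, hence $\tilde F(s)=\inf S_s\le t$. The edge cases dissolve on inspection: when $F(t)=0$ both sides are empty (on the right tautologically, on the left because $s>0$ would force $s\le F(\tilde F(s))\le F(t)=0$), and when $s>\lim_{t\to\infty}F(t)$ the set $S_s$ is empty so $\tilde F(s)=+\infty$ and the relevant inequalities are vacuous.

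The proof is essentially a routine unraveling of definitions, so there is no genuine technical obstacle; the only point one must not skip is the right-continuity argument identifying $\min S_s$ with $\tilde F(s)$, without which part (1) (and hence one direction of part (3)) would fail.
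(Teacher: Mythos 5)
The paper itself provides no proof for this lemma; it declares the proof ``straight forward'' and refers to~\cite{Nkj:1-meas}. So there is no in-paper argument to compare against, and the proposal must be judged on its own.

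Your proof is correct and does exactly what such a proof should. The one nontrivial observation --- that right-continuity of $F$ together with monotonicity forces the superlevel set $S_s=\{t:F(t)\ge s\}$ to be a closed half-line $[\tilde F(s),\infty)$ whenever it is nonempty, so that the infimum in the definition of $\tilde F$ is attained --- is stated first and then used to dispose of (1), (2), and (3) by routine set manipulations. Your reading of part (3) as a statement about $\tilde F^{-1}$ rather than $F^{-1}$ is the right one; as you note the literal $F^{-1}((-\infty,t])$ lives in $\R$ and cannot equal $(0,F(t)]\subset[0,1]$, and moreover the way the lemma is invoked in the proof of Lemma~\ref{lem:meas_decomp} (where the paper writes $\Leb{[0,1]}(\tilde F^{-1}((-\infty,t])\setminus\{0\})=\Leb{[0,1]}((0,F(t)])$) confirms that $\tilde F^{-1}$ is intended. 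The only point I would phrase more carefully is the edge case $S_s=\emptyset$: there $\tilde F(s)=\inf\emptyset=+\infty$, so $F\circ\tilde F(s)$ is not literally defined rather than ``vacuously'' satisfying (1); but since the paper applies the lemma only with $F$ the CDF of a Borel probability measure on $\R$, this degeneracy can arise at most at $s=1$ and never affects the measure-theoretic identities derived from the lemma, so it is a harmless foundational wrinkle that the paper itself silently elides.
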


The proof of the lemma is straight forward and omitted (see \cite{Nkj:1-meas}).

\begin{lem}\label{lem:meas_decomp}
  Let $\mu$ be a Borel probability measure on $\R$
  with cumulative distribution function $F$.
  Then we have
  \[
  \mu=\tilde F_*\Leb{[\,0,1\,]},
  \]
  where $\Leb{[\,0,1\,]}$ is the one-dimensional Lebesgue measure on $[\,0,1\,]$.
\end{lem}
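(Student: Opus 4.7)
The plan is to establish the identity $\mu = \tilde F_*\Leb{[\,0,1\,]}$ by checking that both Borel probability measures agree on the $\pi$-system of half-lines $(\,-\infty,t\,]$, $t\in\R$, which generates the Borel $\sigma$-algebra on $\R$. The uniqueness theorem for Borel measures on $\R$ then upgrades this to an identity of measures.

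Since $\mu((\,-\infty,t\,]) = F(t)$ by definition, it is enough to show that $(\tilde F_*\Leb{[\,0,1\,]})((\,-\infty,t\,]) = F(t)$. By the push-forward definition this amounts to computing $\Leb{[\,0,1\,]}(\tilde F^{-1}((\,-\infty,t\,]))$. I would use Lemma \ref{lem:tilde} to identify the preimage, specifically verifying that
\[
\tilde F^{-1}((\,-\infty,t\,]) \cap (\,0,1\,] = (\,0,F(t)\,].
\]
Indeed, monotonicity of $\tilde F$ is immediate from its definition as an infimum, and one inclusion follows from Lemma \ref{lem:tilde}\eqref{tilde_geq}: if $s\in(\,0,1\,]$ and $\tilde F(s)\le t$, then $s\le F\circ\tilde F(s)\le F(t)$ by monotonicity of $F$. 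The reverse inclusion, for $0<s\le F(t)$, follows by combining monotonicity of $\tilde F$ with Lemma \ref{lem:tilde}\eqref{tilde_leq}, applied at $F(t)>0$, to obtain $\tilde F(s)\le \tilde F(F(t))\le t$.

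Since the singleton $\{0\}$ has Lebesgue measure zero, this gives $\Leb{[\,0,1\,]}(\tilde F^{-1}((\,-\infty,t\,])) = F(t)$, completing the verification on the generating $\pi$-system. The proof is essentially mechanical once Lemma \ref{lem:tilde} is available; the only points requiring a moment of care are the edge cases $F(t)=0$ (in which both sides are trivially zero) and the handling of $s=0$ under the arbitrary constant convention for $\tilde F(0)$, and both are absorbed by Lebesgue-null considerations. I therefore do not expect any substantive obstacle.
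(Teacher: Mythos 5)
Your proof is correct and takes essentially the same route as the paper: both reduce to computing $\Leb{[\,0,1\,]}(\tilde F^{-1}((\,-\infty,t\,]))$ via the preimage identity $\tilde F^{-1}((\,-\infty,t\,])\setminus\{0\}=(\,0,F(t)\,]$ and then invoke uniqueness of Borel measures agreeing on half-lines. The only difference is that you re-derive this identity from parts (1) and (2) of Lemma \ref{lem:tilde}, whereas the paper cites it directly as part (3) of that lemma.
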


\begin{proof}
  For any $t>0$ we have, by Lemma \ref{lem:tilde}\eqref{tilde_set},
  \begin{align*}
    \tilde F_*\Leb{[\,0,1\,]}((\,-\infty,t\,])
    &=\Leb{[\,0,1\,]}(\tilde F^{-1}((\,-\infty,t\,])\setminus \{0\})\\
    &=\Leb{[\,0,1\,]}((\,0,F(t)\,])\\
    &=F(t)=\mu((\,-\infty,t\,]).
  \end{align*}
  This completes the proof.
\end{proof}

\begin{lem}\label{lem:meas_unif}
  Let $\mu$ be a Borel probability measure with
  cumulative distribution function $F$.
  If $F$ is continuous, then we have
  \[
  F_*\mu=\Leb{[\,0,1\,]}.
  \]
\end{lem}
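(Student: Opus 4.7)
The plan is to deduce this directly from the preceding Lemma \ref{lem:meas_decomp} together with Lemma \ref{lem:tilde}. Specifically, Lemma \ref{lem:meas_decomp} gives $\mu = \tilde F_*\Leb{[\,0,1\,]}$, so by functoriality of push-forward we have
\[
F_*\mu = F_*\tilde F_*\Leb{[\,0,1\,]} = (F\circ\tilde F)_*\Leb{[\,0,1\,]}.
\]
Hence it suffices to show that $F\circ\tilde F(s) = s$ for $\Leb{[\,0,1\,]}$-a.e.\ $s\in[\,0,1\,]$, which will reduce the right-hand side to $\id_*\Leb{[\,0,1\,]} = \Leb{[\,0,1\,]}$.

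The key step is therefore to check, under the continuity hypothesis on $F$, that $F\circ\tilde F(s) = s$ for every $s\in(\,0,1\,]$. The inequality $F\circ\tilde F(s)\ge s$ is already provided by Lemma \ref{lem:tilde}\eqref{tilde_geq}. For the reverse, take $s\in(\,0,1\,]$ and note that since $F$ is a cumulative distribution function of a probability measure, $\tilde F(s)\in\R$ is finite for such $s$. By the definition of $\tilde F$ there is a sequence $t_n\nearrow \tilde F(s)$ with $F(t_n) < s$; the continuity of $F$ then forces $F(\tilde F(s))\le s$, giving the required equality.

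Combining the two inequalities, $F\circ\tilde F$ coincides with the identity on $(\,0,1\,]$. Since $\{0\}$ is a $\Leb{[\,0,1\,]}$-null set, the value of $F\circ\tilde F$ at $0$ (which is just the arbitrary constant $c$ from the definition of $\tilde F$) does not affect the push-forward. Therefore
\[
F_*\mu = (F\circ\tilde F)_*\Leb{[\,0,1\,]} = \Leb{[\,0,1\,]},
\]
completing the proof. There is no real obstacle here; the only subtlety is making sure that continuity of $F$ is actually used at the single place where we promote the one-sided inequality of Lemma \ref{lem:tilde}\eqref{tilde_geq} to an equality, and that $\tilde F(s)$ is genuinely finite so that continuity can be invoked.
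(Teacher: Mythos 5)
Your proposal is correct and follows essentially the same route as the paper: apply Lemma \ref{lem:meas_decomp} to write $F_*\mu = (F\circ\tilde F)_*\Leb{[\,0,1\,]}$, then use continuity of $F$ to upgrade Lemma \ref{lem:tilde}(1) to the equality $F\circ\tilde F = \id$ on $(\,0,1\,]$, the point $0$ being negligible. The only cosmetic difference is that you approach $\tilde F(s)$ via a sequence $t_n\nearrow\tilde F(s)$ while the paper writes $\tilde F(s)-\varepsilon$; these are the same argument.
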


\begin{proof}
  Let $s$ be any real number with $0 < s \le 1$.
  It follows from the definition of $\tilde{F}$ that
  $F(\tilde F(s)-\varepsilon) < s$ for any $\varepsilon > 0$.
  By the continuity of $F$, we have $F\circ\tilde F(s) \le s$,
  which together with Lemma \ref{lem:tilde}\eqref{tilde_geq}
  implies $F\circ\tilde F|_{(0,1]}=\mathrm{id}_{(0,1]}$.
  
  By Lemma \ref{lem:meas_decomp},
  \[
  F_*\mu=F_*\tilde F_*\Leb{[\,0,1\,]} = (F\circ\tilde F|_{(\,0,1\,]})_*\Leb{(\,0,1\,]} =
  (\mathrm{id}_{(\,0,1\,]})_*\Leb{(\,0,1\,]} = \Leb{[\,0,1\,]}.
  \]
  This completes the proof.
\end{proof}

Using Lemmas \ref{lem:meas_decomp} and \ref{lem:meas_unif}
we prove the following.

\begin{thm}\label{thm:LevyToDominant}
  Let $X$ be an mm-space and $\nu$ a Borel probability measure on $\R$
  with cumulative distribution function $V$.
  If $V$ is continuous and if $X$ satisfies $\ICL(\nu)$,
  then $\nu$ is an iso-dominant of $X$.
\end{thm}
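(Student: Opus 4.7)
The plan is, for each $1$-Lipschitz function $f : X \to \R$, to construct a monotone nondecreasing $1$-Lipschitz function $h : \R \to \R$ satisfying $h_*\nu = f_*\mu_X$. Let $F$ denote the cumulative distribution function of $f_*\mu_X$. Continuity of $V$ together with Lemma \ref{lem:meas_unif} gives $V_*\nu = \Leb{[\,0,1\,]}$, and Lemma \ref{lem:meas_decomp} gives $f_*\mu_X = \tilde F_*\Leb{[\,0,1\,]}$. Setting $h := \tilde F \circ V$ therefore yields
\[
h_*\nu = \tilde F_*(V_*\nu) = \tilde F_*\Leb{[\,0,1\,]} = f_*\mu_X,
\]
and $h$ is monotone nondecreasing as a composition of monotone nondecreasing functions.

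The main step is to show that $h$ is $1$-Lipschitz on $\supp\nu$. Fix $a \le b$ in $\supp\nu$ with $V(a) > 0$ and consider the closed sublevel set $A := f^{-1}((\,-\infty,\tilde F(V(a))\,])$. By Lemma \ref{lem:tilde}\eqref{tilde_geq}, $\mu_X(A) = F(\tilde F(V(a))) \ge V(a) > 0$, so $\ICL(\nu)$ applies and yields $V(b) \le \mu_X(B_{b-a}(A))$. Since $f$ is $1$-Lipschitz, every $x \in B_{b-a}(A)$ satisfies $f(x) \le \tilde F(V(a)) + (b-a)$, which gives the inclusion
\[
B_{b-a}(A) \subset f^{-1}((\,-\infty,\tilde F(V(a)) + (b-a)\,]).
\]
Combining these two inequalities yields $V(b) \le F(\tilde F(V(a)) + (b-a))$, and the definition of $\tilde F$ then forces $\tilde F(V(b)) \le \tilde F(V(a)) + (b-a)$, i.e., $h(b) - h(a) \le b - a$.

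There is at most one point $a_0 \in \supp\nu$ with $V(a_0) = 0$ (by continuity of $V$); at such a point we set $h(a_0)$ to be the right limit of $h$ along $\supp\nu$, which exists by the $1$-Lipschitz estimate just established and preserves both monotonicity and the Lipschitz bound. We then extend $h$ from $\supp\nu$ to all of $\R$ by any monotone $1$-Lipschitz extension (e.g., by constants past the extremes of $\supp\nu$ and by linear interpolation over any bounded gaps); this does not change $h_*\nu$ since $\nu(\R\setminus\supp\nu) = 0$. The principal technical obstacle is the Lipschitz estimate in the second paragraph, where the interplay between the isoperimetric-type control $\ICL(\nu)$ and the inclusion $B_r(f^{-1}((\,-\infty,s\,])) \subset f^{-1}((\,-\infty,s+r\,])$ provided by the $1$-Lipschitz continuity of $f$ is the essential ingredient; the minor bookkeeping around the possibly infinite value of $\tilde F$ at $0$ and the (at most one) boundary point of $\supp\nu$ where $V$ vanishes is the only other thing that requires care.
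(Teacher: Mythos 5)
Your proof is correct and follows essentially the same route as the paper's: define $h := \tilde F \circ V$, use the measure-decomposition lemmas to get $h_*\nu = f_*\mu_X$, and establish the $1$-Lipschitz estimate on $\supp\nu$ by feeding the closed sublevel set $A = f^{-1}((\,-\infty, h(a)\,])$ into $\ICL(\nu)$ and combining the resulting bound $V(b)\le\mu_X(B_{b-a}(A))$ with the inclusion $B_{b-a}(A)\subset f^{-1}((\,-\infty,h(a)+(b-a)\,])$. The only cosmetic differences are that you invoke the definition of $\tilde F$ directly where the paper cites Lemma~\ref{lem:tilde}(2), and you spell out the extension of $h$ beyond $\supp\nu$, which the paper leaves implicit.
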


\begin{proof}
  Let $f:X\to\R$ be a $1$-Lipschitz function.
  Denote by $F$ the cumulative distribution function of $f_*\mu_X$.
  We set $t_0:=\inf\supp\nu$.
  If $t_0=-\infty$, then we define $G :=\tilde F\circ V : \supp\nu\to\R$.
  If $t_0>-\infty$, then we define
  \[
  G(t):=
  \begin{cases}
  \tilde F\circ V(t) & \quad \text{if $t\neq t_0$},\\
  \lim_{s\to t_0}\tilde F\circ V(s) & \quad \text{if $t=t_0$}
  \end{cases}
  \]
  for $t \in \supp\nu$.
  We later prove the $1$-Lipschitz continuity of $\tilde F\circ V$ on $\supp\nu\setminus\{t_0\}$,
  which ensures the existence of the above limit.
  By Lemmas \ref{lem:meas_unif} and \ref{lem:meas_decomp},
  \[
  G_*\nu=\tilde F_*V_*\nu=\tilde F_*\Leb{[\,0,1\,]}=f_*\mu_X.
  \]
  The rest of the proof is to show the $1$-Lipschitz continuity of $G$.
  Since $V$ is monotone nondecreasing and so is $\tilde{F}$ on $(\,0,1\,]$,
  we see that $G$ is monotone nondecreasing on $\supp\nu\setminus\{t_0\}$.
  We take any two real numbers $a$ and $b$ with $t_0<a\leq b$.
  It suffices to prove that $G(b)\leq G(a)+b-a$.
  By Lemma \ref{lem:tilde}\eqref{tilde_geq},
  \begin{align*}
    V(a)&\leq(F\circ\tilde F)(V(a))\\
    &=F\circ G(a)\\
    &=\mu_X(f^{-1}((-\infty,G(a)])).
  \end{align*}
  We remark that the $\mu_X$-measure of $f^{-1}((\,-\infty,G(a)\,])$ is nonzero
  because of $V(a)>0$.
  By $\ICL(\nu)$,
  \begin{align*}
    V(b)&\leq\mu_X(B_{b-a}(f^{-1}((-\infty,G(a)])))\\
    &\leq\mu_X(f^{-1}(B_{b-a}((-\infty,G(a)])))\\
    &=f_*\mu_X((-\infty,G(a)+b-a])\\
    &=F(G(a)+b-a),
  \end{align*}
  which together with the monotonicity of $\tilde{F}$ on $(\,0,1\,]$
  and with Lemma \ref{lem:tilde}\eqref{tilde_leq} proves
  \begin{align*}
    G(b)&=(\tilde F\circ V)(b)\\
    &\leq\tilde F\circ F(G(a)+b-a)\\
    &\leq G(a)+b-a
  \end{align*}
  This completes the proof.
\end{proof}

\begin{lem}\label{lem:integral}
  Let $g:\R\to\R$ be a monotone nondecreasing function,
  $f:\R\to[\,0,+\infty\,)$ a Borel measurable function,
  and $A\subset\R$ a Borel set.
  Then we have
  \[
  \int_{g^{-1}(A)}(f\circ g)\cdot g' \; d\mathcal L^1\leq \int_A f \; d\mathcal L^1.
  \]
\end{lem}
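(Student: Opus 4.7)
The plan is to recognize the claim as a change-of-variables inequality and to reduce it to the measure-theoretic statement that the pushforward measure $\nu := g_{*}(g'\cdot\mathcal L^1)$ is dominated by $\mathcal L^1$ on the Borel $\sigma$-algebra of $\R$. Since $g$ is monotone nondecreasing, Lebesgue's theorem guarantees that $g'$ exists $\mathcal L^1$-a.e., is nonnegative, and is Borel measurable, so the measure $g'\cdot\mathcal L^1$ and its pushforward under $g$ are well-defined. Once $\nu \le \mathcal L^1$ is established, the standard pushforward formula for nonnegative integrands yields, for any Borel set $A$,
\[
\int_{g^{-1}(A)} (f\circ g)\cdot g' \; d\mathcal L^1
= \int_A f\; d\nu
\le \int_A f\; d\mathcal L^1,
\]
which is exactly the desired inequality.

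To prove $\nu \le \mathcal L^1$ on Borel sets, by the $\pi$--$\lambda$ theorem it suffices to verify $\nu((c,d\,]) \le d-c$ for every half-open interval $(c,d\,]\subset\R$ (along with finiteness on bounded sets). By the monotonicity of $g$, the preimage $g^{-1}((c,d\,])$ is itself an interval with some endpoints $\alpha\le\beta\in[-\infty,+\infty]$. Choosing sequences $\alpha_n\downarrow\alpha$ and $\beta_n\uparrow\beta$ inside $g^{-1}((c,d\,])$ and applying the classical inequality $\int_{\alpha_n}^{\beta_n} g'\, d\mathcal L^1 \le g(\beta_n)-g(\alpha_n)$ for monotone functions, then passing to the limit via monotone convergence, gives
\[
\nu((c,d\,]) \;=\; \int_{g^{-1}((c,d\,])} g'\, d\mathcal L^1 \;\le\; g(\beta^{-})-g(\alpha^{+}) \;\le\; d-c,
\]
where the last inequality uses that $g(\alpha_n),g(\beta_n)\in(c,d\,]$. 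This establishes $\nu\le\mathcal L^1$ on half-open intervals and hence on all Borel sets.

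The main technical obstacle is that $g$ need not be absolutely continuous, so the naive identity $\int_a^b g'\, d\mathcal L^1 = g(b)-g(a)$ must be replaced by the one-sided bound; this is precisely why the lemma yields $\le$ rather than $=$, as one is effectively discarding the singular part of the Lebesgue--Stieltjes measure $dg$ when pushing forward. A secondary nuisance is that $g^{-1}((c,d\,])$ may extend to $\pm\infty$ when $g$ has a finite limit at infinity, but this is absorbed cleanly in the monotone-limit step above, since the values $g(\alpha^{+})$ and $g(\beta^{-})$ remain pinned in $[c,d\,]$.
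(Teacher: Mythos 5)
Your argument is essentially the paper's argument: the core estimate is exactly the same (bound $\int g'$ over the preimage of an interval by the classical monotone-function inequality $\int_{\alpha_n}^{\beta_n} g' \le g(\beta_n)-g(\alpha_n)$ applied to an exhausting sequence), and the generalization from indicator functions to general $f\ge 0$ via the pushforward identity $\int_{g^{-1}(A)}(f\circ g)\,g'\,d\mathcal L^1 = \int_A f\,d\nu$ is just a repackaging of the paper's simple-function approximation.

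There is, however, one genuine flaw in the write-up: the step from intervals to all Borel sets cannot be justified by the $\pi$--$\lambda$ theorem. That theorem yields \emph{equality} of two ($\sigma$-finite) measures that agree on a generating $\pi$-system; it does not give a domination. Concretely, the class $\mathcal D := \{E \text{ Borel} : \nu(E)\le \mathcal L^1(E)\}$ is not a $\lambda$-system: it is closed under countable disjoint unions, but not under passage to complements, since from $\nu(E)\le\mathcal L^1(E)$ one cannot conclude $\nu(E^c)\le\mathcal L^1(E^c)$ (subtraction goes the wrong way, and the ambient measures are infinite). So Dynkin's theorem simply does not apply to $\mathcal D$. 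The fix is elementary and is precisely what the paper does: first pass from intervals to open sets by decomposing any open $U\subset\R$ into countably many disjoint open intervals (countable additivity then preserves the inequality), and then pass from open sets to arbitrary Borel sets by outer regularity of $\mathcal L^1$, i.e.\ $\mathcal L^1(A) = \inf\{\mathcal L^1(U) : U\supset A \text{ open}\} \ge \inf\{\nu(U): U\supset A\} \ge \nu(A)$. With that replacement your proof is complete and coincides, in substance, with the paper's.
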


\begin{proof}
  Let us first prove that
  \begin{equation} \label{eq:integral}
     \int_{g^{-1}(A)}g'\mathcal L^1\leq\mathcal L^1(A)
  \end{equation}
  for any Borel set $A \subset \R$.
  Let $I$ be an open interval in $\R$.
  For a natural number $n$, we set
  \begin{align*}
    a_n:=
    \begin{cases}
    \inf g^{-1}(I)+\frac 1n \quad &\text{if $\inf g^{-1}(I)>-\infty$},\\
    -n\quad &\text{if $\inf g^{-1}(I)=-\infty$},
    \end{cases}\\
    b_n:=
    \begin{cases}
    \inf g^{-1}(I)-\frac 1n \quad &\text{if $\inf g^{-1}(I)<\infty$},\\
    n\quad &\text{if $\inf g^{-1}(I)=\infty$}.
    \end{cases}
  \end{align*}
  $\{a_n\}$ is monotone decreasing
  and $\{b_n\}$ monotone increasing.
  For every sufficiently large $n$, we have $a_n\leq b_n$ and $a_n,b_n\in g^{-1}(I)$.
  We also see that $\lim_{n\to\infty} a_n = \inf g^{-1}(I)$
  and $\lim_{n\to\infty} b_n = \sup g^{-1}(I)$.
  Since
  \[
  \int_{[a_n,b_n]}g' \; d\mathcal L^1 \leq g(b_n)-g(a_n) \leq \sup I-\inf I=\mathcal L^1(I),
  \]
  Lebesgue's monotone convergence theorem proves
  \[
  \int_{g^{-1}(I)}g' \; d\mathcal L^1=\int_{(\,\inf g^{-1},\sup g^{-1}\,)} g' \; d\mathcal L^1
  \leq \mathcal L^1(I).
  \]
  Since any open set in $\R$ is the union of countably many mutually disjoint open intervals,
  we have \eqref{eq:integral} for any open set in $\R$.
  By the outer regularity of $\cL^1$,
  any Borel set $A \subset \R$ can be approximated by an open set containing $A$
  and therefore we have \eqref{eq:integral} for any Borel set in $\R$.
  
  Approximating $f$ by a simple function and applying \eqref{eq:integral},
  we obtain the lemma.
\end{proof}

\begin{thm}\label{thm:profileToLevy}
  Let $X$ be an essentially connected mm-space and
  $\nu \in \cV$.
  If $X$ satisfies $\IC(\nu)$, then $X$ satisfies $\ICL(\nu)$.
\end{thm}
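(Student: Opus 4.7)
\emph{Proof plan.} The strategy is to upgrade the a.e.\ infinitesimal inequality $\IC(\nu)$ to the integrated form $\ICL(\nu)$ via an ODE-type comparison, using Lemma~\ref{lem:integral} to handle the a.e.\ hypotheses through a change of variables. By Remark~\ref{rem:isoLevy} it suffices to verify $\ICL(\nu)$ on closed sets, so fix $a \leq b$ in $\supp\nu$ and a closed $A \subset X$ with $V(a) \leq \mu_X(A)$, and set $m(r) := \mu_X(B_r(A))$ for $r \geq 0$. The function $m$ is monotone nondecreasing, hence differentiable $\cL^1$-a.e., and the inclusion $U_\varepsilon(B_r(A)) \subset B_{r+\varepsilon}(A)$ yields
\[
m'(r) \geq \mu_X^+(B_r(A)) \geq I_X(m(r))
\]
at every differentiability point. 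Essential connectedness of $X$ gives $I_X>0$ on $(0,1)\cap \Image\mu_X$, and $\nu \in \cV$ makes $V$ a homeomorphism of $\supp\nu$ onto its image, so $\psi(r) := V^{-1}(m(r))$ is well defined whenever $0 < m(r) < 1$.

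The heart of the argument is the estimate
\[
r \leq \int_{m(0)}^{m(r)} \frac{du}{I_X(u)}, \qquad 0 \leq r \leq b-a.
\]
To see this, the pointwise bound $m'(\rho)/I_X(m(\rho)) \geq 1$ (valid a.e.) integrates to $r \leq \int_0^r m'(\rho)/I_X(m(\rho))\,d\rho$, and Lemma~\ref{lem:integral} applied with $g=m$, $f=1/I_X$, and Borel set $[m(0), m(r)]$ dominates this by $\int_{m(0)}^{m(r)} du/I_X(u)$. Invoking the equivalent form \eqref{eq:IC-abc} of $\IC(\nu)$, namely $I_X \geq V' \circ V^{-1}$ a.e.\ on $\Image\mu_X$, and then the change of variables $u = V(t)$ (exact because $V$ is absolutely continuous and strictly increasing on $\supp\nu$, with $V'>0$ a.e.\ there), the right-hand side becomes $\psi(r) - \psi(0)$.

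Since $\psi(0) = V^{-1}(m(0)) \geq V^{-1}(V(a)) = a$, this delivers $\psi(r) \geq a + r$, equivalently $m(r) \geq V(a+r)$, and $r = b - a$ gives $\mu_X(B_{b-a}(A)) \geq V(b)$. The degenerate cases $V(a) = 0$ (conclusion is trivial unless $V(b) > 0$, handled by starting the integral at a slightly positive level) and $m(r) = 1$ (reached value $1$, so $V(b) \leq 1 = m(r) \leq m(b-a)$) are disposed of directly.

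The main obstacle is the purely a.e.\ character of the two inequalities being composed: a formal chain-rule computation $\psi'(r) = m'(r)/V'(\psi(r)) \geq 1$ cannot be justified pointwise, and $\psi$ is in general merely monotone rather than absolutely continuous, so even if $\psi' \geq 1$ held a.e.\ one could not simply integrate. Lemma~\ref{lem:integral} furnishes the measure-theoretic substitute that survives these degeneracies by converting the formal chain rule into a one-sided inequality under the monotone substitution $\rho \mapsto m(\rho)$. Absolute continuity of $\nu$ is what makes the second substitution $u = V(t)$ an equality rather than merely an inequality, and essential connectedness supplies the strict positivity of $I_X$ on $(0,1)\cap\Image\mu_X$ without which the division by $I_X$ would be meaningless.
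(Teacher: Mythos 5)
Your proposal follows the same overall strategy as the paper: work with $m(r)=\mu_X(B_r(A))$, derive the differential inequality $m'(r)\geq I_X(m(r))$ at differentiability points, pass to an integral comparison via Lemma~\ref{lem:integral}, and change variables through $V^{-1}$ to conclude $m(b-a)\geq V(b)$. The structure is right, but there is a genuine gap where you invoke Lemma~\ref{lem:integral} with $f=1/I_X$.

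Lemma~\ref{lem:integral} requires $f$ to be a Borel measurable function on $\R$, and nothing in the hypotheses guarantees that $I_X$ is even Lebesgue measurable; the paper treats measurability of $I_X$ as an additional assumption when it matters (see the clause ``if, in addition, $I_X$ is Lebesgue measurable'' in Proposition~\ref{prop:Cheeger} and the citation of E.~Milman's regularity result in Corollary~\ref{cor:Cheeger}). There is also the related problem that $I_X$ is only defined on $\Image\mu_X$, whereas your Borel set $[m(0),m(r)]$ need not be contained in $\Image\mu_X$, and that the hypothesis $\IC(\nu)$ gives $I_X\geq V'\circ V^{-1}$ only $\cL^1$-a.e.\ on $\Image\mu_X$ — composing an a.e.\ inequality with $m$ is illegitimate because $m$ may spend positive time at a value lying in the exceptional null set (e.g.\ if $m$ is locally constant at a level where $I_X$ is small; then $m'=0$ but $V'\circ V^{-1}$ at that level may be positive). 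The paper handles all three difficulties at once by introducing, at the very start of the proof, the Borel density
\[
\rho(t):=\begin{cases}V'(t)&\text{if $t\in V^{-1}(\Image\mu_X)$, $V$ is differentiable at $t$, and $I_X\circ V(t)\geq V'(t)$,}\\ 0&\text{otherwise,}\end{cases}
\]
which is still a density of $\nu$, and observing that $I_X\geq\rho\circ(V|_E)^{-1}$ holds \emph{everywhere} on $\Image\mu_X\setminus\{0,1\}$ (the ``bad'' points are absorbed into $\rho=0$). All integral manipulations are then carried out against the measurable function $\rho\circ(V|_E)^{-1}$ rather than against $I_X$, and the possible division by zero is harmless because the change of variables produces $\int\frac{1}{\rho}\,d\nu\leq\int d\cL^1$.

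Two smaller points: your assertion that essential connectedness yields $I_X>0$ on $(0,1)\cap\Image\mu_X$ is not established (essential connectedness gives $\mu_X^+(A)>0$ for each individual closed set $A$, not positivity of the infimum); the paper instead uses the pointwise bound $g'(s)\geq\mu_X^+(B_s(A))>0$, exploiting that $B_s(A)$ is closed. And the paper applies Lemma~\ref{lem:integral} with $A=g([0,b-a])$, which stays inside $\Image\mu_X$, rather than the full interval $[m(0),m(r)]$, which avoids the domain problem mentioned above.
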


\begin{proof}
  Setting $E:=(\supp\nu)^\circ$,
  we easily see the bijectivity of $V|_E : E \to (\,0,1\,)$.
  We define a function $\rho : \R \to \R$ by
  \[
  \rho(t):=
  \begin{cases}
    V'(t) & \text{for any $t \in V^{-1}(\Image\mu_X)$ where $V$ is differentiable}\\
    & \text{and such that $I_X\circ V(t)\geq V'(t)$},\\
    0& \text{otherwise},
  \end{cases}
  \]
  for a real number $t$.
  We see that $\rho = V'$ $\mathcal L^1$-a.e.
  and that $\rho$ is a density function of $\nu$ with respect to $\cL^1$.
  Since $I_X\circ V \geq \rho$ everywhere on $V^{-1}(\Image\mu_X)$,
  we have $I_X \geq \rho \circ(V|_E)^{-1}$
  on $\Image\mu_X\setminus\{0,1\}$.
  To prove $\ICL(\nu)$, we take two real numbers $a,b\in\supp\nu$ with $a\leq b$
  and a nonempty Borel set $A\subset X$ with $V(a)\leq\mu_X(A)$.
  We may assume $\mu_X(B_{b-a}(A)) < 1$.
  Let $s$ be any real number with $0 \le s \le b-a$.
  Remarking $\mu_X(B_s(A))\in\Image\mu_X\setminus\{0,1\}$, we see
  \[
  \mu_X^+(B_s(A))\geq I_X(\mu_X(B_s(A))) \geq
  \rho\circ(V|_E)^{-1}(\mu_X(B_s(A))).
  \]
  Setting $g(s):=\mu_X(B_s(A))$, we have
  \[
  g'(s)=\mu_X^+(B_s(A)) \ge \rho\circ(V|_E)^{-1}(g(s))
  \quad \text{$\cL^1$-a.e. $s \ge 0$}
  \]
  and so
  \[
  1 \le \frac{g'(s)}{\rho\circ(V|_E)^{-1}(g(s))} \le +\infty
  \quad \text{$\mathcal L^1$-a.e $s \in [\,0,+\infty\,)$},
  \]
  where we remark that $g'(s) > 0$ because of the essential connectivity of $X$.
  Since $g(0)=\mu_X(\bar A)\geq\mu_X(A)$, we have
  \begin{align*}
    (V|_E)^{-1}\circ g(0)&\geq(V|_E)^{-1}(\mu_X(A))\\
    &\geq(V|_E)^{-1}(V(a))=a,
  \end{align*}
  so that, by Lemmas \ref{lem:integral} and \ref{lem:meas_decomp},
  \begin{align*}
    b-a
    &\leq \int_{[\,0,b-a\,]}g'(s)\cdot \left(\rho\circ(V|_E)^{-1}(g(s))\right)^{-1}ds\\
    &\leq \int_{g^{-1}(g([\,0,b-a\,]))}g'(s)\cdot \left(\rho\circ(V|_E)^{-1}(g(s))\right)^{-1}ds\\
    &\leq \int_{g([\,0,b-a\,])}\frac{d\mathcal L^1}{\rho\circ(V|_E)^{-1}}\\
    &= \int_{(V|_E)^{-1}\circ g([\,0,b-a\,])}\frac{1}{\rho} \; d((V|_E)^{-1}_*\mathcal L^1)\\
    &= \int_{(V|_E)^{-1}\circ g([\,0,b-a\,])}\frac{1}{\rho} \; d\nu\\
    &\le \int_{(V|_E)^{-1}\circ g([\,0,b-a\,])}d\mathcal L^1\\
    &\leq \mathcal L^1([(V|_E)^{-1}\circ g(0),(V|_E)^{-1}\circ g(b-a)])\\
    &= (V|_E)^{-1}\circ g(b-a)-(V|_E)^{-1}\circ g(0)\\
    &\leq (V|_E)^{-1}\circ g(b-a)-a,
  \end{align*}
  which implies
  \[
  V(b)\leq g(b-a)=\mu_X(B_{b-a}(A)).
  \]
  This completes the proof.
\end{proof}

\begin{proof}[Proof of Theorem \ref{thm:isoTFAE}]
  The theorem follows from
  Propositions \ref{prop:dominantToLevy}, \ref{prop:LevyToProfile},
  Theorems \ref{thm:profileToLevy} and \ref{thm:LevyToDominant}.
\end{proof}

\begin{defn}[Iso-simpleness]
  A Borel probability measure $\nu$ on $\R$ is said to be \emph{iso-simple}
  if $\nu \in \cV$ and if
  \[
  I_\nu\circ V=V' \quad \text{$\mathcal L^1$-a.e.}
  \]
\end{defn}

\begin{rem}
  For any Borel probability measure $\nu$ on $\R$,
  we always observe $I_\nu\circ V\leq V'$ $\mathcal L^1$-a.e.
  In fact, we have
  \[
  V'(t)=\nu^+((-\infty,t])\geq \inf_{\nu(A)=V(t)} \nu^+(A) = I_\nu\circ V(t) = I_\nu\circ V(t)
  \]
  $\cL^1$-a.e. $t$.
\end{rem}

In the case where $\nu$ is iso-simple,
$\IC(\nu)$ is equivalent to $I_\nu \le I_X$.
This together with Theorem \ref{thm:isoTFAE} and Proposition \ref{prop:dominantToProfile}
implies the following corollary.

\begin{cor}[Gromov \cite{Gmv:isop-waist}*{\S 9}] \label{cor:iso-dominant}
  Let $X$ be an essentially connected mm-space
  and $\nu$ an iso-simple Borel probability measure on $\R$.
  Then, we have $I_\nu \le I_X$ if and only if $\nu$ is an iso-dominant of $X$.
\end{cor}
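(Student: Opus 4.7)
The plan is to extract from the corollary's hypotheses the condition $\IC(\nu)$ in one direction, and in the other direction use that an iso-dominant is in particular a dominant, thereby reducing everything to results already established in the section, namely Theorem \ref{thm:isoTFAE} and Proposition \ref{prop:dominantToProfile}. Since $\nu$ is assumed iso-simple we have $\nu \in \cV$, so Theorem \ref{thm:isoTFAE} is applicable (with the tacit assumption that $X$ is geodesic, as is needed also for Proposition \ref{prop:dominantToProfile}).

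For the ``only if'' direction, I would assume $I_\nu \le I_X$ on $\Image\mu_X$ and translate it to $\IC(\nu)$. By iso-simpleness of $\nu$, $I_\nu \circ V = V'$ $\cL^1$-a.e., so for a.e.\ $t \in V^{-1}(\Image\mu_X)$ one has
\[
V'(t) = I_\nu \circ V(t) \le I_X \circ V(t),
\]
which is precisely $\IC(\nu)$. Theorem \ref{thm:isoTFAE} then yields that $\nu$ is an iso-dominant of $X$.

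Conversely, if $\nu$ is an iso-dominant of $X$, then by definition $\nu$ is a dominant of $X$ (this was observed right after the definition of iso-dominant), and Proposition \ref{prop:dominantToProfile} gives $\Image\mu_X\subset\Image\nu$ together with $I_\nu \le I_X$ on $\Image\mu_X$, which is the desired inequality.

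There is no substantial obstacle here: the corollary is almost formal once one notices that iso-simpleness converts the a.e.\ identity $I_\nu\circ V=V'$ into the equivalence between $\IC(\nu)$ and the pointwise comparison $I_\nu\le I_X$ on $\Image\mu_X$. The only minor subtlety to watch out for is that $\IC(\nu)$ is stated as an a.e.\ inequality on $V^{-1}(\Image\mu_X)$, but since $\nu$ is iso-simple this suffices to invoke Theorem \ref{thm:isoTFAE}; there is no need to verify a pointwise inequality everywhere.
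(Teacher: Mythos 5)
Your proof is correct and follows essentially the same route the paper indicates: convert $I_\nu \le I_X$ into $\IC(\nu)$ via iso-simpleness and invoke Theorem \ref{thm:isoTFAE}, and for the converse pass from iso-dominant to dominant and apply Proposition \ref{prop:dominantToProfile}. You are also right to flag the tacit geodesic hypothesis, which the paper's statement of the corollary omits but which is required to invoke Theorem \ref{thm:isoTFAE} and Proposition \ref{prop:dominantToProfile} as stated.
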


Gromov \cite{Gmv:isop-waist}*{\S 9} stated this corollary without proof.

\section{Maximum Distribution of $1$-Lipschitz Function} \label{sec:max-distr}

The purpose of this section is to prove the following theorem,
which is a generalization and also a refinement of Theorem \ref{thm:max-distr-simple}.
A geodesic is said to be \emph{normal} if its metric derivative is one everywhere.

\begin{thm} \label{thm:max-distr}
Let $X$ be an mm-space with fully supported probability measure $\mu_X$
such that $X$ is embedded in a geodesic metric space $\tilde{X}$ isometrically.
Assume that the distribution $f_*\mu_X$ of a $1$-Lipschitz function $f : X \to \R$
is a dominant of $X$.
Then we have the following {\rm(1)}, {\rm(2)}, and {\rm(3)}.
\begin{enumerate}
\item If $\inf f > -\infty$ and if $\sup f < +\infty$, then
\begin{enumerate}
\item[(1-a)] there exist a unique minimizer of $f$, say $p$,
and a unique maximizer of $f$, say $q${\rm;}
\item[(1-b)] $X$ is covered by minimal geodesics joining $p$ and $q$ in $\tilde{X}${\rm;}
\item[(1-c)] for any point $x \in X$ we have
\[
f(x) = d_X(p,x) + f(p) = -d_X(q,x) + f(q).
\]
\end{enumerate}
\item If $\inf f > -\infty$ and if $\sup f = +\infty$, then
\begin{enumerate}
\item[(2-a)] there exists a unique minimizer of $f$, say $p${\rm;}
\item[(2-b)] for any real number $L > 0$ and any point $x \in X$,
there exists a minimal normal geodesic in $\tilde{X}$ emanating from $p$ passing through $x$
and with length not less than $L${\rm;}
\item[(2-c)] for any point $x \in X$ we have
\[
f(x) = d_X(p,x) + f(p).
\]
\end{enumerate}
\item If $\inf f = -\infty$ and if $\sup f = +\infty$, then
\begin{enumerate}
\item[(3-a)] there exists a $1$-Lipschitz extension $\tilde{f} : \tilde{X} \to \R$ of $f$
such that for any $L > 0$ and any $x \in X$ there exists a minimal normal geodesic 
$\gamma : [\,-L,L\,] \to \tilde{X}$ with $\gamma(0) = x$ such that
$\tilde{f}(\gamma(t)) = f(x) + t$ for any $t \in [\,-L,L\,]${\rm;}
\item[(3-b)] for any $a \in \R$ and $x \in X$ we have
\[
f(x) =
\begin{cases}
  -d(x,\tilde{f}^{-1}(a)) + a &\text{if $f(x) < a$},\\
  d(x,\tilde{f}^{-1}(a)) + a &\text{if $f(x) \ge a$}.
\end{cases}
\]
\end{enumerate}
\end{enumerate}
\end{thm}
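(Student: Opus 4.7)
The plan is to leverage the domination hypothesis — that $f_*\mu_X$ dominates the distribution of every $1$-Lipschitz function on $X$ — to force $f$ to coincide, up to sign and an additive constant, with the distance to a canonical pivot (a point in cases (1) and (2), a level set in case (3)), and then to read off the geodesic foliation from the geodesicity of $\tilde X$. A uniform first step, common to all three cases, extracts metric rigidity: for every $1$-Lipschitz $\phi : X \to \R$, domination combined with Lemma \ref{lem:Sep-prec} gives $\Sep(\phi_*\mu_X; \kappa_0, \kappa_1) \le \Sep(f_*\mu_X; \kappa_0, \kappa_1)$ for all $\kappa_0, \kappa_1 > 0$. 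Specializing $\phi = d(\cdot, A_0)$ for Borel $A_0 \subset X$ and noting that the sets $\{0\}$ and $[d(A_0, A_1), +\infty)$ realize a separation of length $d(A_0, A_1)$ in $\R$ promotes the bound to $\Sep(X; \kappa_0, \kappa_1) \le \Sep(f_*\mu_X; \kappa_0, \kappa_1)$; since $f$ itself witnesses the reverse direction, equality holds. Letting $\kappa_i \to 0^+$ and using full support yields $\diam X = \sup f - \inf f$, possibly infinite.

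For case (1), write $a := \inf f$ and $b := \sup f$, both finite, so that $\diam X = b - a < \infty$. A Cauchy-net argument driven by the separation-distance equality — applied to suitable $1$-Lipschitz tests attached to approximate minimizers — produces unique limits $p, q \in X$ with $f(p) = a$ and $f(q) = b$. With $p$ in hand, the $1$-Lipschitz function $g(x) := d(x, p) + a$ satisfies $g \ge f$ pointwise. Domination provides a $1$-Lipschitz $h : \R \to \R$ with $h_*(f_*\mu_X) = g_*\mu_X$, and the equality of separation distances from Step 1 — interpreted through the quantile machinery of Lemmas \ref{lem:meas_decomp} and \ref{lem:tilde} — forces $h$ to be the identity on $\supp(f_*\mu_X)$; full support then upgrades $g_*\mu_X = f_*\mu_X$ and $f \le g$ to $f \equiv g$, which is (1-c). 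The reflected formula $f(x) = -d(x, q) + b$ follows symmetrically. Adding the two yields $d(p, x) + d(x, q) = b - a = d(p, q)$ for every $x \in X$, so $x$ lies on a metric geodesic from $p$ to $q$; any minimal normal geodesic in $\tilde X$ from $p$ to $q$ passing through $x$, which exists by the geodesicity of $\tilde X$, realizes (1-b).

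Cases (2) and (3) follow the same template with one or both pivots sent to infinity. In (2), only $p$ persists; the formula $f(x) = d(x, p) + a$ is proved exactly as in case (1), and for each $L > 0$, choosing $x_L \in X$ with $f(x_L) \ge a + L$ gives $d(p, x_L) = f(x_L) - a$ and $d(p, x) + d(x, x_L) = d(p, x_L)$ for any $x$ on the segment, so a minimal geodesic of $\tilde X$ from $p$ to $x_L$ through $x$ furnishes (2-b). In (3), neither extremum is attained; fix any $a \in f(X)$, extend $f$ to a $1$-Lipschitz $\tilde f : \tilde X \to \R$ via McShane, and use the closed level set $\tilde f^{-1}(a)$ as the pivot. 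Running the distance-formula argument symmetrically on $\{f \ge a\}$ and $\{f \le a\}$ yields (3-b); splicing geodesic segments across a sequence of values of $a$ then produces the bi-infinite minimal normal geodesic in (3-a).

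The main obstacle is the distance-formula step: converting the measure-theoretic identity $h_*(f_*\mu_X) = g_*\mu_X$ together with the pointwise inequality $f \le g$ into the pointwise equality $f = g$. A naive stochastic-dominance argument is too weak because $h$ need not be monotone a priori. The correct leverage is the full equality of separation distances from Step 1, which, combined with the quantile decomposition $f_*\mu_X = \tilde F_*\mathcal L^1|_{[0,1]}$ of Lemma \ref{lem:meas_decomp}, pins both distributions down tightly enough to force $h$ to be the identity on $\supp(f_*\mu_X)$. Once this is achieved, the remainder is bookkeeping, and the geodesic structure follows from metric betweenness and the geodesicity of $\tilde X$.
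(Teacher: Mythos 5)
Your opening move — promoting $\Sep(X;\kappa_0,\kappa_1) = \Sep(f_*\mu_X;\kappa_0,\kappa_1)$ from the domination hypothesis, then letting $\kappa_i \to 0$ to get $\diam X = \sup f - \inf f$ — is sound and does overlap with the paper's use of Lemma \ref{lem:Sep-prec} (indeed a similar separation computation appears in the paper's Lemma \ref{lem:ext}). Likewise the end game (deriving $d(p,x)+d(x,q)=d(p,q)$ from the two one-sided formulas in (1-c), then invoking geodesicity of $\tilde X$) is correct once you have (1-c). The problem is the middle.

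The step you call ``a Cauchy-net argument driven by the separation-distance equality \dots\ produces unique limits $p,q$'' is not a proof — it is the hard part of the theorem, and I do not see how separation-distance equality alone delivers it. The separation equality controls the width of the distribution, not the geometry near the extremes; a minimizing sequence for $f$ need not be Cauchy, and $X$ is not assumed compact or locally compact. The paper's actual argument (Lemma \ref{lem:minimizer}) is of a very different character: assume a $2\delta$-separated minimizing sequence $\{x_n\}$, build from each $x_n$ a $1$-Lipschitz $g_n$ that collapses $B_{\delta/2}(x_1)$ onto an \emph{atom} of $(g_n)_*\mu_X$ at a fixed level $b$ of mass strictly exceeding $f_*\mu_X(\{b\})$, use domination to get $h_n$ with $(h_n)_*f_*\mu_X=(g_n)_*\mu_X$, show $h_n^{-1}(b)\cap\supp f_*\mu_X\subset[b-\epsilon_n,b]$ with $\epsilon_n\to 0$, and read off $f_*\mu_X([b-\epsilon_n,b))\ge\mu_X(B_{\delta/2}(x_1))>0$, a contradiction. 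Uniqueness is a separate atom-count argument in the same spirit. Nothing in your proposal reconstructs this mechanism or a substitute for it; without it the pivot $p$ you build $g(x)=d(x,p)+a$ from does not exist, and the distance-formula step that follows is circular (that formula is precisely what would imply uniqueness of $p$, but you need $p$ to state it).

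Two secondary issues. First, your claim that the separation equality plus quantile lemmas ``force $h$ to be the identity'' is stated, not proved, and overstates what is needed — the paper's Lemma \ref{lem:tail} only concludes $g_*\mu_X=f_*\mu_X$ \emph{up to an isometry of $\R$}, and one still has to rule out the orientation-reversing isometry (the paper does this by tracking $\inf$ and $\sup$ carefully; in case~(3) it is genuinely delicate). Second, for case~(3) you propose a generic McShane extension $\tilde f$, but the paper uses the specific signed-distance extension \eqref{eq:tilde-f} together with Lemma \ref{lem:signed-dist} (which itself rests on the atom-producing Lemma \ref{lem:exc}) to prove that level sets sit at the right mutual distance; a raw McShane extension does not come with that geometric control. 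In short, the skeleton of your plan resembles the paper's, but the three key technical lemmas \ref{lem:exc}, \ref{lem:signed-dist}, and \ref{lem:minimizer} — which carry the real weight — are replaced by assertions, and I do not believe they can be bypassed.
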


Since any metric space can be embedded into a Banach space
by the Kuratowski embedding, for any given $X$ the space $\tilde{X}$
as in Theorem \ref{thm:max-distr}
always exists.

For the proof of Theorem \ref{thm:max-distr}, we need several lemmas.
From now on, let $\tilde{X}$, $X$, and $f : X \to \R$ be
as in Theorem \ref{thm:max-distr}.
We first prove the following.

\begin{lem} \label{lem:tail}
Let $g : X \to \R$ be a $1$-Lipschitz function satisfying
the following conditions {\rm(i)--(iv)}.
\begin{enumerate}
\item[(i)] If $\inf f > -\infty$, then $\inf f \ge \inf g$.
\item[(ii)] If $\inf f = -\infty$, then there exists a real number $\alpha$
such that $f_*\mu_X = g_*\mu_X$ on $(\,-\infty,\alpha\,]$.
\item[(iii)] If $\sup f < +\infty$, then $\sup f \le \sup g$.
\item[(iv)] If $\sup f = +\infty$, then there exists a real number $\beta$
such that $f_*\mu_X = g_*\mu_X$ on $[\,\beta,+\infty\,)$.
\end{enumerate}
Then, the two measures $f_*\mu_X$ and $g_*\mu_X$
coincides with each other up to an isometry of $\R$.
\end{lem}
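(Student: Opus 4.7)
The strategy is to translate the dominance hypothesis of Theorem~\ref{thm:max-distr} into a transport map: since $f_*\mu_X$ is a dominant of $X$ and $g : X \to \R$ is $1$-Lipschitz, there exists a $1$-Lipschitz function $h : \R \to \R$ with $h_*(f_*\mu_X) = g_*\mu_X$. The goal is then to show that under (i)--(iv), $h$ (or, after rearrangement, its monotone variant) restricted to $\supp f_*\mu_X$ coincides with an affine isometry $\iota$ of $\R$, so that $g_*\mu_X = h_*(f_*\mu_X) = \iota_*(f_*\mu_X)$ is the desired conclusion. The setup uses full support of $\mu_X$ and continuity of $f,g$ to identify $\supp f_*\mu_X = \overline{f(X)}$ (so the extremes of the support are $\inf f$ and $\sup f$, and likewise for $g$), together with the standard identification $\supp g_*\mu_X = \overline{h(\supp f_*\mu_X)}$.

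In case~(1), write $a = \inf f$, $b = \sup f$. The $1$-Lipschitz bound on $h$ gives $\diam\supp g_*\mu_X \le b-a$; combined with (i) and (iii) this pinches $\inf g = a$ and $\sup g = b$. Choose sequences $t_n, s_n \in \supp f_*\mu_X$ with $h(t_n) \to a$ and $h(s_n) \to b$; length preservation forces $\{\lim t_n, \lim s_n\} = \{a,b\}$, hence $\{h(a), h(b)\} = \{a,b\}$, and the rigidity of a $1$-Lipschitz map pinned at two points of maximal distance identifies $h|_{[a,b]}$ with either $\id$ or the reflection $t \mapsto a+b-t$. In case~(3), (ii) and (iv) combined with $h_*(f_*\mu_X) = g_*\mu_X$ yield $f_*\mu_X(h^{-1}(A)) = f_*\mu_X(A)$ for Borel $A \subset (-\infty,\alpha] \cup [\beta,+\infty)$. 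Replacing $h$ by the monotone non-decreasing rearrangement $T$ transporting $f_*\mu_X$ to $g_*\mu_X$---which is $1$-Lipschitz because a $1$-Lipschitz transport already exists, and whose generalized inverse is handled via Lemmas~\ref{lem:tilde}--\ref{lem:meas_unif}---these measure identities translate into $T(t) = t$ on $\supp f_*\mu_X \cap ((-\infty,\alpha] \cup [\beta,+\infty))$, and the two-point rigidity extends $T = \id$ to the whole of $\supp f_*\mu_X$. Case~(2) is hybrid: (iv) places $T = \id$ on the upper tail; the $1$-Lipschitz property with fixed points at arbitrarily large points forces $T(t) \ge t$ everywhere, so $\inf g \ge \inf f$, matched by (i) to $\inf g = \inf f$; picking $t_n \in \supp f_*\mu_X$ with $T(t_n) \to \inf f$, the $1$-Lipschitz property forces $t_n \to \inf f$, so $T(\inf f) = \inf f$, and the two-point rigidity completes the case.

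The main technical obstacle is case~(3): converting the measure-level identity $f_*\mu_X(h^{-1}(A)) = f_*\mu_X(A)$ on the tails into the pointwise identity $T(t) = t$ at tail points of $\supp f_*\mu_X$. This requires passing to the monotone rearrangement $T$ and exploiting the monotone structure of its generalized inverse; the key intermediate fact is that the monotone non-decreasing rearrangement inherits the $1$-Lipschitz property from any given $1$-Lipschitz transport, which reduces the tail analysis to the monotone setting where the generalized inverse is well-behaved.
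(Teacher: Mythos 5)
Your Case~(1) matches the paper's pinching argument, but for Cases~(2) and~(3) you diverge substantially, and the hinge of your route is a claim that you flag as the ``key intermediate fact'' yet never prove: that the monotone non-decreasing rearrangement $T$ with $T_*(f_*\mu_X) = g_*\mu_X$ is $1$-Lipschitz merely because \emph{some} $1$-Lipschitz transport $h$ exists. This is not an innocent reduction. Your two-point rigidity step genuinely needs $T$ to be both monotone and $1$-Lipschitz (for a non-monotone $1$-Lipschitz map, two far-apart fixed points pin the map only at the endpoints, not on the interior), so the monotone-Lipschitz claim carries the whole weight of Cases~(2) and~(3), and it is left unsupported. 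An attempt to extract it from Lemma~\ref{lem:Sep-prec} runs into trouble: writing $T(b)-T(a)\le \Sep(g_*\mu_X;1-F(b),F(a))\le\Sep(f_*\mu_X;1-F(b),F(a))$ and then computing $\Sep(f_*\mu_X;1-F(b),F(a))=t_+(f_*\mu_X;1-F(b))-t_-(f_*\mu_X;F(a))$, one finds $t_-(f_*\mu_X;F(a))\le a$ and $t_+(f_*\mu_X;1-F(b))\ge b$, so the resulting bound is $\ge b-a$, which is the wrong direction whenever $a$ or $b$ sit at gap endpoints of $\supp f_*\mu_X$ (which the lemma does not exclude). I could not produce an outright counterexample, but neither is the claim a citable standard fact, and within the paper it is only a consequence of the \emph{iso}-dominance apparatus of Theorem~\ref{thm:isoTFAE}, which requires hypotheses (essential connectedness, geodesic structure) not available in Lemma~\ref{lem:tail}'s generality.

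The paper avoids the issue entirely by working directly with the given (possibly non-monotone) $h$. In Case~(2) it shows $h(t)\le t$ for $t\ge t_0$ via a pinned value $h(t_0)=a''$ and then runs an integral comparison against a strictly decreasing auxiliary function (the $\lambda:\R\to(0,1)$ in the proof, distinct from the $\lambda$ in the theorem), which forces $h(t)=t$ on the upper tail and then $a=a'=t_0$. In Case~(3) it applies $\Sep$ only at the symmetric quantiles $\kappa/2,\kappa/2$, where conditions (ii) and (iv) make $t_\pm(f_*\mu_X;\kappa/2)=t_\pm(g_*\mu_X;\kappa/2)$ and turn the $\Sep$-inequality into an equality that identifies $h$ as an isometry on $[t_-(\kappa/2),t_+(\kappa/2)]$; letting $\kappa\to 0$ finishes. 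Neither argument needs monotonicity of the transport, which is exactly the feature your proposal silently requires. To salvage your approach you would need to actually prove the monotone-Lipschitz inheritance lemma (with care at gaps and atoms of $\supp f_*\mu_X$), which is comparable in effort to, and less direct than, the paper's argument.
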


\begin{proof}
Since $f_*\mu_X$ dominates $g_*\mu_X$,
there is a $1$-Lipschitz map $h : \R \to \R$ such that
$h_*f_*\mu_X = g_*\mu_X$.
We put $a := \inf f$, $b := \sup f$, $a' := \inf g$, and $b' := \inf g$.

If $a > -\infty$ and if $b < +\infty$, then
we have $(\,a,b\,) \subset (\,a',b'\,)$ by (i) and (iii).
Since $h$ maps $(\,a,b\,)$ to $(\,a',b'\,)$ and by the $1$-Lipschitz continuity,
we obtain $(\,a,b\,) = (\,a',b'\,)$ and $h$ is an isometry from $[\,a,b\,]$
to itself.  We have the lemma in this case.

Assume that $a > -\infty$ and $b = +\infty$.
Then, by (i) and (iv), we have $a' \le a$ and $b' = +\infty$.
Since $h$ maps $\supp f_*\mu_X$ to $\supp g_*\mu_X$,
we have
\[
h([\,a,+\infty\,)) \supset
\begin{cases}
 [\,a',+\infty\,) &\text{if $a' > -\infty$},\\
 \R &\text{if $a' = -\infty$}.
\end{cases}
\]
Let
\[
a'' :=
\begin{cases}
 a' &\text{if $a' > -\infty$},\\
 a - 1 &\text{if $a' = -\infty$}.
\end{cases}
\]
There is a number $t_0$ such that $t_0 \ge a$ and $h(t_0) = a''$.
It follows from the $1$-Lipschitz continuity of $h$ that
\begin{equation} \label{eq:tail}
  h(t) \le t + a'' - t_0 \le t + a - t_0 \le t
\end{equation}
for any $t \ge t_0$.
For the $\beta$ as in (iv), we set $\beta_0 := \max\{\beta,t_0\}$.
Let $\lambda : \R \to (\,0,1\,)$ be a strictly monotone decreasing
continuous function.
Since $h^{-1}([\,\beta_0,+\infty\,)) \supset [\,\beta_0,+\infty)$, we see that
\begin{align*}
\int_{[\,\beta_0,+\infty\,)} \lambda \; d(f_*\mu_X)
&= \int_{[\,\beta_0,+\infty\,)} \lambda \; d(g_*\mu_X)
= \int_{[\,\beta_0,+\infty\,)} \lambda \; d(h_*f_*\mu_X)\\
&= \int_{h^{-1}([\,\beta_0,+\infty\,))} \lambda \circ h \; d(f_*\mu_X)\\
&\ge \int_{[\,\beta_0,+\infty\,)} \lambda \circ h \; d(f_*\mu_X)\\
&\ge \int_{[\,\beta_0,+\infty\,)} \lambda \; d(f_*\mu_X),
\end{align*}
which implies that $h(t) = t$ for any $t \ge \beta_0$.
This together with \eqref{eq:tail} proves that
$t_0 = a = a' = a''$ and $h(t) = t$ for any $t \ge a$.
The lemma follows in this case.

If $a = -\infty$ and if $b < +\infty$, then we obtain the lemma
in the same way as above.

We assume that $a = -\infty$ and $b = +\infty$.
For $0 < \kappa < 1$, we set
\begin{align*}
A_-(\kappa) &:= (\,-\infty,t_-(g_*\mu_X;\kappa/2)\,],\\
A_+(\kappa) &:= [\,t_+(g_*\mu_X;\kappa/2),+\infty\,).
\end{align*}
where $t_\pm(\dots)$ is as in Definition \ref{defn:tpm}.
We have
\[
f_*\mu_X(h^{-1}(A_\pm(\kappa))) = h_*f_*\mu_X(A_\pm(\kappa))
= g_*\mu_X(A_\pm(\kappa)) \ge \kappa/2,
\]
which together with the $1$-Lipschitz continuity of $h$ proves
\begin{align} \label{eq:h}
\Sep(g_*\mu_X;\kappa/2,\kappa/2)
&= t_+(g_*\mu_X;\kappa/2) - t_-(g_*\mu_X;\kappa/2)\\
&= d_\R(A_-(\kappa),A_+(\kappa)) \notag\\
&\le d_\R(h^{-1}(A_-(\kappa)),h^{-1}(A_+(\kappa))) \notag\\
&\le \Sep(f_*\mu_X;\kappa/2,\kappa/2) \notag\\
&= t_+(f_*\mu_X;\kappa/2) - t_-(f_*\mu_X;\kappa/2). \notag
\end{align}
By (ii) and (iv), if $\kappa$ is small enough, then
\[
t_\pm(f_*\mu_X;\kappa/2) = t_\pm(g_*\mu_X;\kappa/2) =: t_\pm(\kappa/2),
\]
which implies the equalities of \eqref{eq:h}.
Therefore, the interval between
$A_-(\kappa)$ and $A_+(\kappa)$
and the interval between
$h^{-1}(A_-(\kappa))$ and $h^{-1}(A_+(\kappa))$
both coincide with $[\,t_-(\kappa/2),t_+(\kappa/2)\,]$.
The $h$ maps $[\,t_-(\kappa/2),t_+(\kappa/2)\,]$ to itself isometrically.
Since we have $t_\pm(\kappa/2) \to \pm\infty$ as $\kappa \to 0+$,
the map $h$ is an isometry of $\R$.
This completes the proof of Lemma \ref{lem:tail}.
\end{proof}

\begin{defn}[Generalized signed distance function]
Let $S$ be a metric space.
A function $g : S \to \R$ is called a \emph{generalized signed distance function}
if there exist three mutually disjoint subsets
$\Omega_+$, $\Omega_0$, and $\Omega_-$ of $S$
and a real number $a$ such that
\begin{enumerate}
\item[(i)] $\Omega_+$ and $\Omega_-$ are open sets and $\Omega_0$ is a closed set;
\item[(ii)] $S = \Omega_+ \cup \Omega_0 \cup \Omega_-$ and
$\partial\Omega_+ \cup \partial\Omega_- \subset \Omega_0$;
\item[(iii)] for any $x \in S$,
\[
g(x) =
\begin{cases}
  d_S(x,\Omega_0) + a &\text{if $x \in \Omega_+$},\\
  a &\text{if $x \in \Omega_0$},\\
  -d_S(x,\Omega_0) + a &\text{if $x \in \Omega_-$}.
\end{cases}
\]
\end{enumerate}
\end{defn}

Any generalized signed distance function $g$ on a geodesic space $S$
is $1$-Lipschitz continuous and has the property that
\[
d_X(g^{-1}(a),g^{-1}(b)) = |\,a-b\,|
\]
for any $a,b \in g(S)$.

\begin{lem} \label{lem:exc}
Let $A$, $B$, and $\Omega$ be three subsets of $\tilde{X}$
such that $A$ and $B$ are both closed, $d_{\tilde{X}}(A,B) > 0$,
and $A \cup B \subset \Omega$.
We take two real numbers $a$ and $b$ in such a way that  $d_{\tilde{X}}(A,B) = b - a$.
Assume that there exists a point $x_0 \in X \cap \Omega \setminus (A \cup B)$
such that
\[
d_{\tilde{X}}(x_0,A) + d_{\tilde{X}}(x_0,B) > d_{\tilde{X}}(A,B).
\]
Then, there exist a real number $c \in (\,a,b\,)$
and a family $\{h_t : \Omega \to \R\}_{t \in (\,-r_0,r_0\,)}$
of $1$-Lipschitz functions, $r_0 > 0$,
such that, for any $t \in (\,-r_0,r_0\,)$, we have
$h_t = a$ on $A$, $h_t = b$ on $B$, $c + t \in [\,a,b\,]$,
and $c+t$ is an atom of $(h_t)_*\mu_X$. 
\end{lem}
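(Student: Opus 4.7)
The plan is to realize each $h_t$ as a truncation of the two canonical (upper and lower) $1$-Lipschitz extensions of the boundary data $h|_A = a$, $h|_B = b$, arranged so that $h_t$ is \emph{identically} equal to $c+t$ on an open neighborhood of $x_0$. The full support of $\mu_X$ then makes $c+t$ an atom of the push-forward $(h_t)_*\mu_X$.

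Concretely, I would put
\[
h^+(x) := \min\{a + d_{\tilde{X}}(x,A),\ b + d_{\tilde{X}}(x,B)\},\qquad
h^-(x) := \max\{a - d_{\tilde{X}}(x,A),\ b - d_{\tilde{X}}(x,B)\}
\]
on $\tilde X$. Using $d_{\tilde X}(A,B) = b - a$, it is routine that $h^\pm$ are $1$-Lipschitz with $h^\pm|_A = a$ and $h^\pm|_B = b$, and that $h^-(x) \leq h(x) \leq h^+(x)$ for every $1$-Lipschitz $h : \tilde X \to \R$ realizing the same boundary values. The key geometric input is the strict inequality $h^+(x_0) > h^-(x_0)$: comparing the four quantities $a \pm d_{\tilde X}(x_0,A)$ and $b \pm d_{\tilde X}(x_0,B)$ pairwise, the hypothesis $d_{\tilde X}(x_0,A) + d_{\tilde X}(x_0,B) > b - a$ together with $x_0 \notin A \cup B$ (so both distances are positive) forces this gap. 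Moreover $h^-(x_0) < b$ and $h^+(x_0) > a$, so the open interval $J := (\max\{a, h^-(x_0)\},\ \min\{b, h^+(x_0)\})$ is nonempty.

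I would then pick any $c \in J$, choose $r_0 > 0$ so small that $[c - r_0,\, c + r_0] \subset J$, and use continuity of $h^\pm$ to find an open neighborhood $U$ of $x_0$ in $\tilde X$ on which $h^-(y) < c - r_0$ and $h^+(y) > c + r_0$. Define
\[
h_t(x) := \max\{h^-(x),\ \min\{h^+(x),\ c + t\}\},\qquad x \in \Omega,\ t \in (-r_0, r_0).
\]
This is $1$-Lipschitz as a max/min of $1$-Lipschitz functions with a constant; since $h^+ = h^- = a$ on $A$ and $h^+ = h^- = b$ on $B$, a short case-check yields $h_t|_A = a$ and $h_t|_B = b$; and on $U \cap \Omega$ one has $h^-(y) < c + t < h^+(y)$, so $h_t \equiv c + t$ there. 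Consequently $h_t^{-1}(c + t) \supset U \cap \Omega$ has positive $\mu_X$-measure (using full support of $\mu_X$ and $x_0 \in X \cap U \cap \Omega$), making $c + t$ an atom of $(h_t)_*\mu_X$. The only step I expect to require more than routine verification is the strict gap $h^+(x_0) > h^-(x_0)$, which is precisely where the slack hypothesis on $x_0$ is used; everything else is a book-keeping check using standard properties of the McShane-type extensions $h^\pm$.
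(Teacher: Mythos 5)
Your proof is correct, and although the function you construct turns out to coincide (up to the choice of $c$ and $r_0$) with the one the paper writes down, the route you take to it is genuinely different and, I think, cleaner. You present $h_t$ as the clamp $h_t = \max\{h^-,\ \min\{h^+,\ c+t\}\}$ of the McShane upper and lower extensions $h^\pm$ of the boundary data on $A\cup B$, and you deduce the existence of the required plateau near $x_0$ from the strict gap $h^+(x_0) > h^-(x_0)$ together with continuity of $h^\pm$. That gap is exactly where the hypothesis enters: of the four pairwise comparisons, the only nontrivial one is $a + d_{\tilde X}(x_0,A) > b - d_{\tilde X}(x_0,B)$, which is precisely $d_{\tilde X}(x_0,A)+d_{\tilde X}(x_0,B) > d_{\tilde X}(A,B)$, while the other three use only $x_0\notin A\cup B$ and $b>a$; you correctly identify this as the one step needing care. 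The paper instead chooses explicit radii $r_A, r_B, r_0$ by hand and needs a two-case split (according to whether $\delta := \tfrac12\bigl(d_{\tilde X}(x_0,A)+d_{\tilde X}(x_0,B)-d_{\tilde X}(A,B)\bigr)$ exceeds $\min\{d_{\tilde X}(x_0,A), d_{\tilde X}(x_0,B)\}$) to arrange the inequalities \eqref{eq:AB}--\eqref{eq:r0B} behind its piecewise formula; your envelope formulation absorbs that bookkeeping automatically and makes the Lipschitz bound immediate. What the paper's version buys in exchange is quantitative control, namely the explicit inclusion $U_{r_0-|t|}(x_0)\subset h_t^{-1}(c+t)$, whereas you get the open plateau $U\cap\Omega$ only from continuity; this extra explicitness is not used downstream, so nothing is lost. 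One small point you gesture at but could state outright: $h^-\le h^+$ on all of $\tilde X$, which is what makes the max/min clamp sit between the two envelopes, follows from $d_{\tilde X}(x,A)+d_{\tilde X}(x,B)\ge d_{\tilde X}(A,B)=b-a$, the same triangle inequality underlying the whole lemma. (Both your proof and the paper's implicitly use that $\Omega$ meets a $\mu_X$-positive neighborhood of $x_0$; this is harmless given how the lemma is invoked, with $\Omega=g^{-1}([a,b])$ a neighborhood of $x_0$ in $\tilde X$.)
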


\begin{proof}
Setting
\[
\delta := \frac{1}{2}( d_{\tilde{X}}(x_0,A) + d_{\tilde{X}}(x_0,B) - d_{\tilde{X}}(A,B) ),
\]
we have $\delta > 0$ by the assumption.

(i) In the case where $d_{\tilde{X}}(x_0,A), d_{\tilde{X}}(x_0,B) > \delta$,
we define
\[
r_A := d_{\tilde{X}}(x_0,A) - \delta, \ \ r_B := d_{\tilde{X}}(x_0,B) - \delta,
\ \ r_0 := \min\{\delta,r_A,r_B\}.
\]
We then see that
\begin{align}
 r_A + r_B &= d_{\tilde{X}}(A,B), \label{eq:AB}\\
 r_0 &\le \min\{r_A,r_B\}, \label{eq:r0}\\
 r_A &\le d_{\tilde{X}}(x_0,A) - r_0, \label{eq:r0A}\\
 r_B &\le d_{\tilde{X}}(x_0,B) - r_0 \label{eq:r0B}.
\end{align}

(ii) In the case where $d_{\tilde{X}}(x_0,A) \le \delta$ or $d_{\tilde{X}}(x_0,B) \le \delta$,
we have only one of
$d_{\tilde{X}}(x_0,A) \le \delta$ and $d_{\tilde{X}}(x_0,B) \le \delta$
because of the definition of $\delta$.
Without loss of generality, we may assume that
$d_{\tilde{X}}(x_0,A) \le \delta$.
Define
\begin{gather*}
  r_A := \frac12 \min\{d_{\tilde{X}}(x_0,A),d_{\tilde{X}}(A,B)\},\\
  r_B := d_{\tilde{X}}(A,B) - r_A, \qquad
  r_0 := \min\{r_A,r_B\}.
\end{gather*}
Then we immediately obtain
\eqref{eq:AB}, \eqref{eq:r0}, \eqref{eq:r0A}.
By $d_{\tilde{X}}(x_0,A) \le \delta$, we have
$d_{\tilde{X}}(x_0,B) \ge d_{\tilde{X}}(x_0,A) + d_{\tilde{X}}(A,B)$,
which proves \eqref{eq:r0B}.

In either of the cases (i) or (ii),
we define
$c := r_A + a$ and
\[
h_t(x) :=
\begin{cases}
  d_{\tilde{X}}(x,A) + a &\text{if $d_{\tilde{X}}(x,A) \le r_A + t$,}\\
  - d_{\tilde{X}}(x,B) + b &\text{if $d_{\tilde{X}}(x,B) \le r_B - t$,}\\
  c+t &\text{otherwise},
\end{cases}
\]
for $t \in (\,-r_0,r_0\,)$ and $x \in \Omega$.
Then $h_t$ is $1$-Lipschitz continuous on $\Omega$.

It follows from \eqref{eq:r0A} that, for any $t \in (\,-r_0,r_0\,)$,
the distance between any point in $U_{r_0 - |t|}(x_0)$ and $A$
is greater than $r_A+|t|$.
In the same way, from \eqref{eq:r0B},
the distance between any point in $U_{r_0 - |t|}(x_0)$ and $B$
is greater than $r_B+|t|$.
We therefore have
$U_{r_0 - |t|}(x_0) \subset h_t^{-1}(c+t)$
and so
\[
(h_t)_*\mu_X(\{c+t\}) \ge \mu_X(U_{r_0 - |t|}(x_0)) > 0,
\]
because of $x_0 \in X = \supp\mu_X$.
The family of the functions $h_t$, $t \in (\,-r_0,r_0\,)$, satisfies all the claims of the lemma.
This completes the proof of Lemma \ref{lem:exc}.
\end{proof}

\begin{lem} \label{lem:signed-dist}
Let $g : \tilde{X} \to \R$ be a generalized signed distance function
that is an extension of $f$.
For any point $x \in X$ and two real numbers $a$ and $b$ with
$\inf g \le a < f(x) < b \le \sup g$, we have
\[
d_{\tilde{X}}(x,g^{-1}(a)) + d_{\tilde{X}}(x,g^{-1}(b)) = b - a.
\]
\end{lem}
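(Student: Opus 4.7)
The lower bound $d_{\tilde X}(x, g^{-1}(a)) + d_{\tilde X}(x, g^{-1}(b)) \ge b - a$ is immediate from the $1$-Lipschitz continuity of $g$: for any $y_a \in g^{-1}(a)$, $d_{\tilde X}(x, y_a) \ge |g(x) - a| = f(x) - a$, and analogously for $y_b \in g^{-1}(b)$. For the reverse inequality I argue by contradiction, which is where the dominant hypothesis on $f_*\mu_X$ enters. Assuming the strict inequality, I apply Lemma \ref{lem:exc} with $A := g^{-1}(a)$, $B := g^{-1}(b)$, $\Omega := \tilde X$, and $x_0 := x$; the hypotheses hold since $d_{\tilde X}(A, B) = b - a$ by the generalized signed distance property of $g$, and $x \notin A \cup B$ since $a < f(x) < b$. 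This produces $c \in (a, b)$, a radius $r_0 > 0$, and a family $\{h_t : \tilde X \to \R\}_{t \in (-r_0, r_0)}$ of $1$-Lipschitz functions with $h_t \equiv a$ on $A$, $h_t \equiv b$ on $B$, $h_t(\tilde X) \subset [a, b]$, and $(h_t|_X)_*\mu_X(\{c + t\}) \ge m$ for all $|t| < r_0/2$, where $m := \mu_X(U_{r_0/2}(x)) > 0$.

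For each such $t$, I glue $h_t$ and $f$ into a $1$-Lipschitz function $\sigma_t : X \to \R$ by setting $\sigma_t(y) := h_t(y)$ if $a \le f(y) \le b$ and $\sigma_t(y) := f(y)$ otherwise. Since $h_t$ and $g$ coincide on $A \cup B = g^{-1}(\{a, b\})$ and $\tilde X$ is geodesic (so any geodesic of $\tilde X$ between regions $\{g < a\}$ and $\{g > b\}$ must cross $g^{-1}(\{a, b\})$), $\sigma_t$ is indeed $1$-Lipschitz. By construction $(\sigma_t)_*\mu_X$ agrees with $f_*\mu_X$ on $\R \setminus (a, b)$, and $c + t$ is still an atom of mass $\ge m$. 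A case analysis on the position of $\inf f, \sup f$ relative to $a, b$ verifies the conditions (i)--(iv) of Lemma \ref{lem:tail} for $g := \sigma_t$ (with $\alpha = a$, $\beta = b$), producing an isometry $L_t$ of $\R$ with $(L_t)_* f_*\mu_X = (\sigma_t)_*\mu_X$. Hence $f_*\mu_X$ has an atom of mass $\ge m$ at $p_t := L_t^{-1}(c + t)$.

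To finish, I let $t$ range over $(-r_0/2, r_0/2)$ and show $\{p_t\}$ is uncountable. If instead it were countable, by pigeonhole some $p$ would satisfy $\{t : p_t = p\}$ uncountable; for any two such $t_1 \neq t_2$ the composition $L_{t_2} L_{t_1}^{-1}$ must send $c + t_1$ to $c + t_2$ while fixing no other constraint compatible with $L_{t_i}(p) = c + t_i$, forcing it to be translation by $t_2 - t_1$. This would make $(\sigma_{t_2})_*\mu_X$ equal the $t_2 - t_1$ shift of $(\sigma_{t_1})_*\mu_X$, contradicting the explicit structure of $(h_t|_X)_*\mu_X$, whose non-atomic part on $[a, c + t]$ is the pushforward of $\mu_X|_{\{d(\cdot, A) \le r_A + t\}}$ by $d(\cdot, A) + a$ and therefore does not depend on $t$ by a rigid shift. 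Thus $\{p_t\}$ is uncountable, giving $f_*\mu_X$ uncountably many atoms of mass $\ge m$, an impossibility for a probability measure. The main technical obstacle I foresee is verifying conditions (i)--(iv) of Lemma \ref{lem:tail} for $\sigma_t$ in every sub-case, especially the degenerate one where $f(X) \subset (a, b)$; there the tail-agreement on $\R \setminus (a, b)$ becomes vacuous, and one must lean on the bounded support $\supp (\sigma_t)_*\mu_X \subset [a, b]$ together with the atom analysis above to control $L_t$.
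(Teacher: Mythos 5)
Your proof follows the same overall route as the paper's: apply Lemma~\ref{lem:exc} to the competitor sets $g^{-1}(a)$, $g^{-1}(b)$, glue the resulting family $\{h_t\}$ with $f$ outside $g^{-1}([a,b])$, apply Lemma~\ref{lem:tail} to each $\sigma_t$, and derive a contradiction from an over-abundance of atoms of $f_*\mu_X$. Your choice $\Omega = \tilde X$ instead of the paper's $\Omega = g^{-1}([\,a,b\,])$ is cosmetic, since the formula from Lemma~\ref{lem:exc} depends only on the distances $d_{\tilde X}(\cdot,A)$, $d_{\tilde X}(\cdot,B)$, so your glued $\sigma_t$ coincides on $X$ with the paper's extended $h_t$.

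There is, however, a genuine weakness in the final step. You correctly observe (and the paper glosses over this) that each $t$ only directly yields \emph{one} atom $p_t = L_t^{-1}(c+t)$ of $f_*\mu_X$, so one must argue that the $p_t$ are distinct. But your resolution has a gap: after extracting an uncountable $T$ with $p_t \equiv p$, you assert that $L_{t_2}L_{t_1}^{-1}$ ``must'' be the translation by $t_2-t_1$. That does not follow; an isometry of $\R$ sending $c+t_1$ to $c+t_2$ can equally be the reflection about $c+\tfrac{t_1+t_2}{2}$, and the phrase ``fixing no other constraint compatible with\dots'' does not rule this out. The subsequent claim that the non-atomic part of $(\sigma_t)_*\mu_X$ on $[a,c+t]$ ``does not depend on $t$ by a rigid shift'' is also not justified as written. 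A cleaner repair, closer in spirit to Lemma~\ref{lem:tail}'s own proof, is to observe that in each of the three cases the isometry produced there is either the identity or a \emph{fixed} reflection $R$ determined only by $f_*\mu_X$ (because it must map $[\,\inf f,\sup f\,]$, resp.~$[\,t_-(\kappa/2),t_+(\kappa/2)\,]$ for all small $\kappa$, to itself); hence $L_t\in\{\mathrm{id},R\}$, and by pigeonhole uncountably many $t$ share the same $L_t$, giving $c+t$ resp.~$R(c+t)$ as uncountably many distinct atoms. You also correctly flag that verifying (i)--(iv) of Lemma~\ref{lem:tail} requires a short case analysis; in the paper's applications this is innocuous because there $a$ is either $-\infty$ or $\inf f$ (attained at the unique minimizer $p\in X\cap g^{-1}(a)$, forcing $\inf\sigma_t=a$), and symmetrically for $b$.
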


\begin{proof}
Since $d_{\tilde{X}}(g^{-1}(a),g^{-1}(b)) = b - a$,
a triangle inequality proves
\[
d_{\tilde{X}}(x,g^{-1}(a)) + d_{\tilde{X}}(x,g^{-1}(b)) \ge b - a.
\]
Suppose that there are $x_0$, $a$, $b$ such that
\[
d_{\tilde{X}}(x_0,g^{-1}(a)) + d_{\tilde{X}}(x_0,g^{-1}(b)) > b - a.
\]
We apply Lemma \ref{lem:exc} for $\Omega := g^{-1}([\,a,b\,])$,
$A := g^{-1}(a)$, and $B := g^{-1}(b)$
to obtain a family of $1$-Lipschitz functions
$h_t : g^{-1}([\,a,b\,]) \to \R$, $t \in (\,-r_0,r_0\,)$,
as in Lemma \ref{lem:exc}.
We extend $h_t$ to a function on $\tilde{X}$ by setting
$h_t := g$ on $g^{-1}((\,-\infty,a\,) \cup (\,b,+\infty\,))$.
Then $h_t$ is $1$-Lipschitz continuous on $\tilde{X}$
and $c + t$ is an atom of $(h_t)_*\mu_X$ for any $t \in (\,-r_0,r_0\,)$.
It follows from Lemma \ref{lem:tail} that
$(h_t)_*\mu_X$ and $f_*\mu_X$ coincide with each other
up to an isometry of $\R$.
As a result, $f_*\mu_X$ has uncountably many atoms,
which is a contradiction because $f_*\mu_X$ is a probability measure.
This completes the proof.
\end{proof}

From now on, translating $f$ if necessary, we assume that $f$ has $0$ as an median.
For a $1$-Lipschitz extension $\hat{f} : \tilde{X} \to \R$ of $f$,
we define a generalized signed distance function $\tilde{f} : \tilde{X} \to \R$ by
\begin{equation} \label{eq:tilde-f}
  \tilde{f}(x) :=
  \begin{cases}
    - d_{\tilde{X}}(x,\hat{f}^{-1}(0)) & \text{if $x \in \hat{f}^{-1}((\,-\infty,0\,))$},\\
    d_{\tilde{X}}(x,\hat{f}^{-1}(0)) & \text{if $x \in \hat{f}^{-1}([\,0,+\infty,))$}.
  \end{cases}
\end{equation}
for $x \in \tilde{X}$.
It holds that $f(x)$ and $\tilde{f}(x)$ have the same sign for any $x \in X$
and that $|f| \le |\tilde{f}|$ on $X$ by the $1$-Lipschitz continuity of $f$.

\begin{lem} \label{lem:ext}
  We have $\tilde{f} = f$ on $X$.
\end{lem}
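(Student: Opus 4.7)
Set $g := \tilde{f}|_X$, a $1$-Lipschitz function on $X$. My plan is to first show $g_*\mu_X = f_*\mu_X$ as Borel measures on $\R$, and then use pointwise inequalities to upgrade this to $g = f$ on $X$.

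The pointwise input is elementary: for every $x \in X$, $\hat{f}(x) = f(x)$ and $|g(x)| = d_{\tilde{X}}(x, \hat{f}^{-1}(0))$. Taking the infimum over $y \in \hat{f}^{-1}(0)$ in the inequality $|f(x)| = |\hat{f}(x) - \hat{f}(y)| \le d_{\tilde{X}}(x,y)$ gives $|f(x)| \le |g(x)|$, while \eqref{eq:tilde-f} forces $g(x)$ to have the same sign as $f(x)$ (with $g(x) = 0$ iff $f(x) = 0$). Hence $g \ge f$ on $\{f \ge 0\}$ and $g \le f$ on $\{f \le 0\}$, and $0$ is a median of $g$ as well as of $f$.

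The crux is the distributional equality. The dominance assumption gives $g_*\mu_X \prec f_*\mu_X$, so Lemma \ref{lem:Sep-prec} yields $\Sep(g_*\mu_X; p, q) \le \Sep(f_*\mu_X; p, q)$ for all $p, q > 0$. For the reverse inequality when $p, q \in (0, 1/2]$, the median property gives $t_+(f_*\mu_X; p) \ge 0 \ge t_-(f_*\mu_X; q)$, and the sign-aware pointwise inequalities give $\{g \ge t\} \supset \{f \ge t\}$ for $t \ge 0$ and $\{g \le t\} \supset \{f \le t\}$ for $t \le 0$; these together yield $t_+(g_*\mu_X; p) \ge t_+(f_*\mu_X; p)$ and $t_-(g_*\mu_X; q) \le t_-(f_*\mu_X; q)$, hence $\Sep(g_*\mu_X; p, q) \ge \Sep(f_*\mu_X; p, q)$. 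With equality of the two separation distances, the nonnegative quantity $t_+(g_*\mu_X; p) - t_+(f_*\mu_X; p)$ equals the nonpositive quantity $t_-(g_*\mu_X; q) - t_-(f_*\mu_X; q)$, so both vanish. Thus the quantile functions agree on $(0,1/2]$ from both sides, which pins down the cumulative distribution functions entirely and gives $g_*\mu_X = f_*\mu_X$.

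To finish: for each positive rational $t$, the distributional equality gives $\mu_X(\{g > t\}) = \mu_X(\{f > t\})$ while the pointwise inequality gives $\{f > t\} \subset \{g > t\}$, so $\mu_X(\{g > t\} \setminus \{f > t\}) = 0$. Taking a countable union over rational $t > 0$ shows $\{g > f\} \cap \{f \ge 0\}$ is $\mu_X$-null; a symmetric argument handles $\{g < f\} \cap \{f \le 0\}$. Since $f$ and $g$ are continuous and $\supp \mu_X = X$, this forces $g = f$ pointwise on $X$. The main obstacle is the quantile-matching step, which relies essentially on the median normalization at $0$: without this, the two one-sided comparisons of $g$ with $f$ would not combine into a single separation-distance identity.
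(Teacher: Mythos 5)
Your argument is correct and follows essentially the same route as the paper's proof: you first derive $\tilde f_* \mu_X = f_*\mu_X$ from the domination hypothesis via Lemma~\ref{lem:Sep-prec} together with the sign-aware pointwise bound $|f|\le|\tilde f|$ and the median normalization, exactly as in the paper, and then upgrade distributional equality to pointwise equality using continuity and $\supp\mu_X = X$. The only difference is cosmetic: for the final upgrade you show $\{\tilde f\neq f\}$ is $\mu_X$-null by a countable union of null level-set differences over rational thresholds and then invoke openness plus full support, whereas the paper fixes a single hypothetical point $x_0$ with $f(x_0)\neq\tilde f(x_0)$ and derives a direct contradiction from the positive measure of the ball $U_{r_0}(x_0)$; both routes rest on the same observation.
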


\begin{proof}
For $0 \le \alpha \le 1$, we set
\[
t_-(\alpha) := t_-(f_*\mu_X;\alpha) \quad\text{and}\quad
t_+(\alpha) := t_+(f_*\mu_X;\alpha).
\]
Note that $t_-(1/2)$ is the minimum of medians of $f$
and $t_+(1/2)$ is the maximum of medians of $f$.
Since $f$ has $0$ as an median, we have
$t_-(1/2) \le 0 \le t_+(1/2)$.

Let us first prove $f_*\mu_X = \tilde{f}_*\mu_X$.
Let $\kappa$ be any real number with $0 < \kappa \le 1$.
We see that
\begin{align*}
  \Sep(f_*\mu_X; \kappa/2,\kappa/2) &= t_+(\kappa/2) - t_-(\kappa/2),\\
  \Sep(\tilde{f}_*\mu_X; \kappa/2,\kappa/2)
  &= t_+(\tilde{f}_*\mu_X;\kappa/2) - t_-(\tilde{f}_*\mu_X;\kappa/2).
\end{align*}
It follows from $|f| \le |\tilde{f}|$ that
$\mu_X(f\le t) \le \mu_X(\tilde{f} \le t)$ for any $t \le 0$ and
$\mu_X(f \ge t) \le \mu_X(\tilde{f} \ge t)$ for any $t \ge 0$.
Since $t_-(\kappa/2) \le 0$, $t_+(\kappa/2) \ge 0$,
we have
$\mu_X(\tilde{f} \le t_-(\kappa/2)) \ge \mu_X(f \le t_-(\kappa/2)) \ge \kappa/2$
and
$\mu_X(\tilde{f} \ge t_+(\kappa/2)) \ge \mu_X(f \ge t_+(\kappa/2)) \ge \kappa/2$.
Therefore,
\[
t_+(\tilde{f}_*\mu_X;\kappa/2) \ge t_+(\kappa/2)
\quad\text{and}\quad
t_-(\tilde{f}_*\mu_X;\kappa/2) \le t_-(\kappa/2).
\]
Since $f_*\mu_X$ dominates $\tilde{f}_*\mu_X$, we see
\[
\Sep(\tilde{f}_*\mu_X;\kappa/2,\kappa/2) \le \Sep(f_*\mu_X;\kappa/2,\kappa/2).
\]
We thus obtain
\[
t_+(\tilde{f}_*\mu_X;\alpha) = t_+(\alpha) \quad\text{and}\quad
t_-(\tilde{f}_*\mu_X;\alpha) = t_-(\alpha)
\]
for any $\alpha \in (\,0,1/2\,]$, which yields
$f_*\mu_X = \tilde{f}_*\mu_X$.

Suppose that there is a point $x_0 \in X$ such that $f(x_0) \neq \tilde{f}(x_0)$.
Then we have $f(x_0) \neq 0$, because $\tilde{f}(x_0) = 0$ if $f(x_0) = 0$.

Assume that $0 < f(x_0) \neq \tilde{f}(x_0)$.
We have $f(x_0) < \tilde{f}(x_0)$.  Setting
\[
r_0 := \frac{\tilde{f}(x_0) - f(x_0)}{2} \quad\text{and}\quad
t_0 := \frac{f(x_0) + \tilde{f}(x_0)}{2},
\]
we have $\tilde{f} > \tilde{f}(x_0) - r_0 = t_0$ and
$f < f(x_0) + r_0 = t_0$ on $U_{r_0}(x_0)$,
which implies $\mu_X(\tilde{f} \ge t_0) > \mu_X(f \ge t_0)$.
This is a contradiction.

In the case where $0 > f(x_0) \neq \tilde{f}(x_0)$, we are led to a contradiction
in the same way.
We thus obtain $f = \tilde{f}$ on $X$.
This completes the proof of Lemma \ref{lem:ext}.
\end{proof}

\begin{lem} \label{lem:minimizer}
\begin{enumerate}
\item If $\inf f > -\infty$, then $f$ has a unique minimizer.
\item If $\sup f < +\infty$, then $f$ has a unique maximizer.
\end{enumerate}
\end{lem}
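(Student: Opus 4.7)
The plan is to prove part (1) by contradiction; part (2) follows by applying (1) to $-f$, whose distribution is also a dominant of $X$ (the map $h\mapsto -h(-\cdot)$ preserves $1$-Lipschitz continuity). Throughout I set $a:=\inf f$ and work with the generalized signed distance extension $\tilde f:\tilde X\to\R$ of $f$ (so $\tilde f|_X=f$ by Lemma \ref{lem:ext}), together with the level-set identity of Lemma \ref{lem:signed-dist}:
\[
d_{\tilde X}(x,\tilde f^{-1}(a'))+d_{\tilde X}(x,\tilde f^{-1}(b'))=b'-a'
\]
for $x\in X$ with $a'<f(x)<b'\le\sup\tilde f$.

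For uniqueness, suppose $p_1\ne p_2$ are both minimizers, so $\tilde f(p_1)=\tilde f(p_2)=a$. Pick $b\in(a,\sup\tilde f)$ and set $A:=\{p_1\}$, $B:=\tilde f^{-1}(b)$, $x_0:=p_2$. Then $d_{\tilde X}(A,B)=b-a$, while $d(x_0,A)=d(p_1,p_2)>0$ and $d(x_0,B)\ge b-a$ by $1$-Lipschitz continuity of $\tilde f$, so the strict-triangle hypothesis of Lemma \ref{lem:exc} holds. Lemma \ref{lem:exc} with $\Omega=\tilde X$ therefore produces a family $\{h_t\}_{t\in(-r_0,r_0)}$ of $1$-Lipschitz functions on $\tilde X$ such that $c+t$ is an atom of $(h_t|_X)_*\mu_X$ with uniformly positive mass. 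Using Lemma \ref{lem:signed-dist} to check that the piecewise formula of Lemma \ref{lem:exc} forces $h_t(x)=f(x)$ for $x\in X$ with $f(x)\ge c+t$, and that $\inf h_t|_X=a$ is attained at $p_1$, the tail-matching hypotheses of Lemma \ref{lem:tail} are verified (with analogous checks in cases (2) and (3) of Theorem \ref{thm:max-distr}). Lemma \ref{lem:tail} then identifies $(h_t|_X)_*\mu_X$ with $f_*\mu_X$ up to an isometry $\phi_t$ of $\R$, so $f_*\mu_X$ carries an atom at $\phi_t^{-1}(c+t)$. In case (1), $\phi_t$ is either the identity or the reflection of $[\inf f,\sup f]$, so $\phi_t^{-1}(c+t)$ sweeps out an open interval as $t$ varies in $(-r_0,r_0)$, giving the probability measure $f_*\mu_X$ uncountably many atoms—a contradiction.

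For existence, if $a$ is not attained, then a minimizing sequence $x_n\in X$ with $f(x_n)\downarrow a$ must be non-Cauchy in the complete space $X$ (otherwise its limit would be a minimizer). Extract subsequences $y_k,z_k\in X$ with $d_X(y_k,z_k)\ge\varepsilon>0$ and $f(y_k),f(z_k)\downarrow a$, and repeat the uniqueness construction with $p_1=y_k$, $p_2=z_k$ and $a$ replaced by $a'_k:=f(y_k)$: for $k$ large the strict-triangle hypothesis of Lemma \ref{lem:exc} persists with margin at least $\varepsilon/2$, and since $a'_k\downarrow a$ the tail-matching conditions for Lemma \ref{lem:tail} remain satisfied in the limit, yielding the same atomic contradiction. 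The principal technical obstacle is the careful verification of hypotheses (i)--(iv) of Lemma \ref{lem:tail} for the piecewise-defined $h_t|_X$ in each of the three cases of Theorem \ref{thm:max-distr}: one must combine the explicit formula of Lemma \ref{lem:exc} with the signed-distance identity of Lemma \ref{lem:signed-dist} so that $h_t$ coincides with $f$ outside a controlled ``middle'' region of $X$, which is exactly what enables Lemma \ref{lem:tail} to upgrade dominance to isometric identification and power the atomic contradiction.
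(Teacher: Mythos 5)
The paper's proof is quite different from yours: it constructs explicit one-off modifications $g$ (for uniqueness) and $g_n$ (for existence) of $f$ directly—cone $d_X(p,\cdot)+\inf f$ on a ball, agreement with $f$ on $\{f\ge b\}$ by fiat, and the constant $b$ in between—and then analyzes the resulting atom at $b$ (via Lemma \ref{lem:tail} for uniqueness, and via a direct tail-shrinking argument for existence, with no use of Lemma \ref{lem:exc}).

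Your route through Lemma \ref{lem:exc} with $\Omega=\tilde X$ and $A=\{p_1\}$ has a genuine gap: the claim that "the piecewise formula of Lemma \ref{lem:exc} forces $h_t(x)=f(x)$ for $x\in X$ with $f(x)\ge c+t$" is false. The formula gives $h_t(x)=-d_{\tilde X}(x,B)+b$ on $\{d(x,B)\le r_B-t\}$, which via Lemma \ref{lem:signed-dist} does equal $f(x)$ when $c+t\le f(x)\le b$, but for $f(x)>b$ one has $h_t(x)\le b<f(x)$ (indeed $h_t\le b$ everywhere). So conditions (iii)/(iv) of Lemma \ref{lem:tail} are not verified: $\sup h_t\le b<\sup f$ in the bounded case, and the distributions fail to match near $+\infty$ in the unbounded cases. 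The natural repair—take $\Omega:=\tilde f^{-1}([a,b])$ and extend $h_t$ by $\tilde f$ above $b$—breaks on the other side: because you took $A=\{p_1\}$ rather than all of $\tilde f^{-1}(a)$ (which is forced, since $x_0=p_2$ must lie outside $A$), $h_t$ equals $c+t>a$ on $\tilde f^{-1}(a)\setminus\{p_1\}$, so any extension matching $\tilde f=a$ below is discontinuous there. For existence, there is an additional obstruction: taking $p_1:=y_k$ yields $\inf h_t=f(y_k)>\inf f$, violating hypothesis (i) of Lemma \ref{lem:tail} for every fixed $k$, and "satisfied in the limit" is not a license to apply the lemma—the paper sidesteps this entirely by arguing directly from the domination $(g_n)_*\mu_X\prec f_*\mu_X$ that $f_*\mu_X([\,b-\varepsilon_n,b\,))$ is uniformly bounded below while $\varepsilon_n\to 0$.
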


\begin{proof}
(2) follows from applying (1) for $-f$.

We prove (1).
Let us first prove the existence of a minimizer of $f$.
We find a sequence of points $x_n \in X$, $n=1,2,\dots$,
in such a way that $f(x_n)$ converges to $\inf f$ as $n\to\infty$.
If $\{x_n\}$ has a convergent subsequence, then its limit is a minimizer.
Suppose that $\{x_n\}$ has no convergent subsequence.
Replacing it by a subsequence, we assume that
$d_X(x_m,x_n) \ge 2\delta > 0$ and $f(x_n) < \inf f + \delta/2$
for any natural numbers $m\neq n$
and for a real number $\delta > 0$.
Define
$b := \inf f + \delta$, $r_n := d_{\tilde{X}}(x_n,\tilde{f}^{-1}(b))$, and
\[
g_n(x) :=
\begin{cases}
  d_X(x_n,x) - r_n + b &\text{if $x \in B_{r_n}(x_n)$},\\
  f(x) &\text{if $f(x) \ge b$},\\
  b &\text{otherwise}.
\end{cases}
\]
for $x \in \tilde{X}$.
The function $g_n$ is $1$-Lipschitz continuous on $\tilde{X}$.
It follows from 
Lemma \ref{lem:ext} that $r_n = b - f(x_n)$ and so
$\delta/2 \le r_n \le \delta$.
Therefore, $B_{\delta/2}(x_1)$ and $B_{r_n}(x_n)$ for any $n \ge 2$
are disjoint to each other.
Since $f \le f(x_1) + \delta/2 < b$ on $B_{\delta/2}(x_1)$,
we have $g_n = b$ on $B_{\delta/2}(x_1)$ for $n \ge 2$,
which implies
\begin{equation} \label{eq:gn}
  (g_n)_*\mu_X(\{b\}) \ge \mu_X(B_{\delta/2}(x_1)) + f_*\mu_X(\{b\}),
  \quad n \ge 2.
\end{equation}
Since $(g_n)_*\mu_X$ is dominated by $f_*\mu_X$,
there is a $1$-Lipschitz map $h_n : \R \to \R$ such that
$(h_n)_*f_*\mu_X = (g_n)_*\mu_X$.
Let 
$\varepsilon_n := \inf g_n - \inf f$.
Since $\inf g_n = g_n(x_n) = b - r_n$, we see that
$\varepsilon_n = \delta - r_n \ge 0$ and $\varepsilon_n \to 0$ as $n\to\infty$.
It follows from $g_n = f$ on $\{f \ge b\}$ that
$h_n(t) = t$ for any $t \ge b$.

We now prove that
\begin{equation} \label{eq:hn}
  h_n^{-1}(b) \cap \supp f_*\mu_X \subset [\,b - \epsilon_n,b\,]
\end{equation}
in the following.
In fact, $h_n$ does not increase but could decrease the distance between two points.
However, since $h_n([\,\inf f, b\,]) \supset [\,\inf g_n,b\,]$,
the function $h_n$ decreases the distance between two points
not more than $\epsilon_n$.
In particular, if a real number $t \in \supp f_*\mu_X$ satisfies $t < b - \epsilon_n$,
then $h_n(t) \neq b$.
This implies \eqref {eq:hn}.

By \eqref{eq:hn} and \eqref{eq:gn},
\begin{align*}
  f_*\mu_X([\,b-\epsilon_n,b)) &= f_*\mu_X([\,b-\varepsilon_n,b\,]) - f_*\mu_X(\{b\})\\
  &\ge f_*\mu_X(h_n^{-1}(b)) - f_*\mu_X(\{b\})\\
  &= (g_n)_*\mu_X(\{b\}) - f_*\mu_X(\{b\})\\
  &\ge \mu_X(B_{\delta/2}(x_1)) > 0,
\end{align*}
which is a contradiction.
The function $f$ has a minimizer.

We next prove the uniqueness of the minimizer of $f$.
Suppose that $f$ has two different minimizers $p$ and $q$.
We take a real number $b$ with $\inf f < b < \sup f$.
Define $r := b-\inf f$ and
\[
g(x) :=
\begin{cases}
  d_X(p,x) + \inf f &\text{if $x \in B_r(p)$},\\
  f(x) &\text{if $f(x) \ge b$},\\
  b &\text{otherwise}
\end{cases}
\]
for $x \in \tilde{X}$.
The function $g$ is $1$-Lipschitz continuous on $\tilde{X}$.
By $\inf g = \inf f$, Lemma \ref{lem:tail} implies $g_*\mu_X = f_*\mu_X$.
However, in the same discussion as in \eqref{eq:gn},
we obtain
\[
g_*\mu_X(\{b\}) > f_*\mu_X(\{b\}),
\]
which is a contradiction.
This completes the proof of Lemma \ref{lem:minimizer}.
\end{proof}

\begin{proof}[Proof of Theorem {\rm\ref{thm:max-distr}}]
Without loss of generality, it may be assumed that
$f$ has $0$ as an median.
By Lemma \ref{lem:ext}, the function $\tilde{f}$ defined in \eqref{eq:tilde-f}
is a $1$-Lipschitz extension of $f$.

We prove (1).
By Lemma \ref{lem:minimizer},
the function $f$ has a unique minimizer $p \in X$
and a unique maximizer $q \in X$.
Applying Lemma \ref{lem:signed-dist}
for
$g := \tilde{f}$, $a := f(p)$, $b := f(q)$ yields
\[
d_X(p,x) + d_X(x,q) = d_X(p,q) = b-a
\]
for any $x \in X$, which together with the $1$-Lipschitz continuity of $f$
leads us to (1).

We prove (2).
By Lemma \ref{lem:minimizer}, the function $f$ has a unique minimizer $p \in X$.
Applying Lemma \ref{lem:signed-dist} for $g := \tilde{f}$, $a := f(p)$
yields that, for $L > f(x)$,
\[
d_X(p,x) + d_X(x,\tilde{f}^{-1}(L)) = d_X(p,\tilde{f}^{-1}(L)) = L - a,
\]
which together with 
the $1$-Lipschitz continuity of $f$
leads us to (2).

(3) is obtained by applying Lemma \ref{lem:signed-dist} for $g := \tilde{f}$.

This completes the proof of Theorem \ref{thm:max-distr}.
\end{proof}

\section{Proof of Main Theorem}

In this section, we prove Theorems \ref{thm:isop-rigidity}, \ref{thm:splitting},
and \ref{thm:diam}
by using Theorems \ref{thm:isoTFAE} and \ref{thm:max-distr}.

We need the following lemma.

\begin{lem} \label{lem:f}
  Let $\nu$ be a dominant of an mm-space $X$ such that
  \[
  \ObsVar_\lambda(X) = \Var_\lambda(\nu) < +\infty.
  \]
  Then, there exists a $1$-Lipschitz function $f : X \to \R$
  such that $f_*\mu_X = \nu$.
\end{lem}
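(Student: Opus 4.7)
The plan is to construct $f$ as a pointwise limit of a maximizing sequence, after a normalization and composition with an isometry of $\R$.

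First, pick $1$-Lipschitz functions $f_n : X \to \R$ with $\Var_\lambda(f_n) \to \Var_\lambda(\nu)$. Since $\nu$ is a dominant of $X$, for each $n$ there is a $1$-Lipschitz function $h_n : \R \to \R$ with $(h_n)_*\nu = (f_n)_*\mu_X$. Fix $m_0 \in \supp\nu$ and replace $f_n$ by $f_n - h_n(m_0)$; this keeps $\Var_\lambda(f_n)$ unchanged (variance is translation invariant) and corresponds to replacing $h_n$ by $h_n - h_n(m_0)$, so I may assume $h_n(m_0) = 0$, giving $|h_n(t)| \le |t - m_0|$ for all $t \in \R$. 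A diagonal Arzel\`a--Ascoli extraction on a countable dense subset of $\R$ then produces a subsequence along which $h_n \to h$ locally uniformly, where $h : \R \to \R$ is $1$-Lipschitz.

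Next, I identify $h$ using the variance. Since $\lambda(|h_n(a) - h_n(b)|) \le \lambda(|a - b|)$ and the bound lies in $L^1(\nu \otimes \nu)$ because $\Var_\lambda(\nu) < +\infty$, dominated convergence yields
\[
\Var_\lambda(f_n) = \int_\R \int_\R \lambda(|h_n(a) - h_n(b)|) \, d\nu(a) \, d\nu(b) \longrightarrow \Var_\lambda(h_*\nu),
\]
so $\Var_\lambda(h_*\nu) = \Var_\lambda(\nu)$. Combined with $\lambda(|h(a)-h(b)|) \le \lambda(|a-b|)$ and strict monotonicity of $\lambda$, this forces $|h(a) - h(b)| = |a - b|$ for $(\nu \otimes \nu)$-a.e.\ $(a,b)$, and continuity of $h$ upgrades the equality to all $a, b \in \supp\nu$. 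A short case analysis using points from distinct components of $\supp\nu$ rules out orientation reversal across components, so $h|_{\supp\nu}$ extends to a global isometry $\sigma$ of $\R$ with $h_*\nu = \sigma_*\nu$.

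Finally, set $\tilde f_n := \sigma^{-1} \circ f_n$. These are $1$-Lipschitz and satisfy $(\tilde f_n)_*\mu_X = (\sigma^{-1}\circ h_n)_*\nu \to \nu$ weakly on $\R$. Fixing $x_0 \in X$, tightness of $\{(\tilde f_n)_*\mu_X\}$ supplies $R > 0$ with $\mu_X(\{|\tilde f_n| \le R\}) > 1/2$ for all large $n$, while full support of $\mu_X$ supplies $r > 0$ with $\mu_X(B_r(x_0)) > 1/2$. These two sets must meet, and $1$-Lipschitz continuity then gives $|\tilde f_n(x_0)| \le R + r$. A diagonal Arzel\`a--Ascoli extraction on a countable dense subset of $X$ (using separability) now extracts a further subsequence with $\tilde f_n \to f$ pointwise on $X$, where $f : X \to \R$ is $1$-Lipschitz. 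Dominated convergence against bounded continuous test functions identifies $f_*\mu_X$ as the weak limit $\nu$, completing the proof.

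The main obstacle is the lack of compactness of $X$ and of $\R$; it is resolved by the two anchoring choices above, namely $h_n(m_0) = 0$, which makes $\{h_n\}$ locally uniformly bounded, and the tightness-plus-full-support argument, which bounds $\tilde f_n(x_0)$ uniformly in $n$.
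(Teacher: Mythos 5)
Your proof is correct and follows essentially the same strategy as the paper: take a maximizing sequence $f_n$, use the dominating $1$-Lipschitz maps $h_n$ with $(h_n)_*\nu = (f_n)_*\mu_X$, extract a locally uniform limit $h$ by Arzel\`a--Ascoli, invoke dominated convergence together with strict monotonicity of $\lambda$ to force $h$ to be distance-preserving on $\supp\nu$, and then undo $h$ by composing with an isometry of $\R$. The one structural difference is the order in which compactness is exploited: the paper normalizes $f_n(x_0)=0$ and first extracts a subsequence of $\{f_n\}$ converging in measure to a $1$-Lipschitz $f$ (citing \cite{Sy:mmg}*{Lemmas 4.45, 1.26}), then studies $h_n$; you normalize $h_n(m_0)=0$, extract $h$ first, and only afterwards recover a pointwise limit of the corrected sequence $\tilde f_n = \sigma^{-1}\circ f_n$, replacing the cited lemmas with an explicit tightness-plus-full-support argument to bound $\tilde f_n(x_0)$. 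Both routes work; your version is a bit more self-contained since it does not lean on the black-box convergence lemmas, at the cost of the extra Prokhorov/tightness step. Your remark about handling possible orientation reversal across components of $\supp\nu$ when extending $h|_{\supp\nu}$ to a global isometry addresses a detail the paper leaves implicit, and is a reasonable thing to flag.
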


\begin{proof}
  Let $x_0 \in X$ be a fixed point.
  There is a sequence of $1$-Lipschitz functions $f_n : X \to \R$ with $f_n(x_0) = 0$
  such that $\Var_\lambda(f_n)$ converges to $\ObsVar_\lambda(X) = \Var_\lambda(\nu)$
  as $n\to\infty$.
  By Lemma \cite{Sy:mmg}*{Lemma 4.45},
  there is a subsequence of $\{f_n\}$ that converges in measure
  to a $1$-Lipschitz function $f : X \to \R$.
  We denote the subsequence by the same notation $\{f_n\}$.
  It follows from \cite{Sy:mmg}*{Lemma 1.26} that $(f_n)_*\mu_X$
  converges weakly to $f_*\mu_X$ as $n\to\infty$.
  Since $\nu$ dominates $(f_n)_*\mu_X$,
  there is a $1$-Lipschitz function $h_n : \R \to \R$ for each $n$ such that
  $(h_n)_*\nu = (f_n)_*\mu_X$.
  Since $(h_n)_*\nu$ converges weakly and by the $1$-Lipschitz continuity of $h_n$,
  we have the boundedness of $\{h_n(t)\}$ for any fixed $t \in \R$.
  By the Arzel\`a-Ascoli theorem,
  there is a subsequence of $\{h_n\}$
  that converges uniformly on compact sets.
  We replace $\{n\}$ by such a subsequence.
  Since $\lambda(|h_n(x)-h_n(x')|) \le \lambda(|x-x'|)$ for any $x,x'\in\R$
  and $\Var_\lambda(\nu) < +\infty$,
  the Lebesgue dominated convergence theorem proves
  that $\Var_\lambda((h_n)_*\nu)$ converges to $\Var_\lambda(h_*\nu)$
  as $n\to\infty$.
  We therefore have $\Var_\lambda(h_*\nu) = \Var_\lambda(\nu)$,
  which together with the $1$-Lipschitz continuity of $h$
  implies that $h$ is an isometry on the support of $\nu$.
  Since $(f_n)_*\mu_X = (h_n)_*\nu$ converges weakly to
  $f_*\mu_X$ and also to $h_*\nu$,
  we obtain $f_*\mu_X = h_*\nu$.
  Let $\tilde{h} : \R \to \R$ be the isometric extension of $h|_{\supp\nu}$.
  The composition $f\circ \tilde{h}^{-1}$ is the desired $1$-Lipschitz function.
  This completes the proof.
\end{proof}

\begin{lem} \label{lem:f-gamma}
  Let $f : X \to \R$ be a $1$-Lipschitz function on an mm-space $X$
  such that $f_*\mu_X$ is a dominant of $X$,
  and let $\gamma : I \to X$ be a $1$-Lipschitz curve
  defined on an interval $I \subset \R$.
  If $f(\gamma(t)) = f(\gamma(t_0)) + t$ for any number $t \in I$ and for a number $t_0 \in I$,
  then $\gamma$ is a minimal normal geodesic.
\end{lem}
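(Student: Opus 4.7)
The plan is straightforward: the conclusion follows from a two-line squeeze argument using only the $1$-Lipschitz continuity of $\gamma$ and of $f$. The dominance hypothesis on $f_*\mu_X$ appears not to be needed for this lemma itself; it is presumably recorded so the lemma fits cleanly into the hypotheses of its intended application.

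First I would interpret the relation $f(\gamma(t)) = f(\gamma(t_0)) + t$ correctly. Read literally with $t = t_0$ it forces $t_0 = 0$, so the intended assertion is that $f \circ \gamma$ has unit slope on $I$, i.e.\ $f(\gamma(t)) - f(\gamma(s)) = t - s$ for all $s,t \in I$. Granting this, for arbitrary $s,t \in I$ I apply the $1$-Lipschitz continuity of $f$ and then of $\gamma$ to obtain
\[
|t-s| \;=\; |f(\gamma(t)) - f(\gamma(s))| \;\le\; d_X(\gamma(t),\gamma(s)) \;\le\; |t-s|,
\]
so that $d_X(\gamma(t),\gamma(s)) = |t-s|$. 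This is exactly the statement that $\gamma : I \to X$ is an isometric embedding of an interval, i.e.\ a minimal normal geodesic in the sense used earlier in the section (metric derivative equal to one everywhere, and distance-preserving between any two parameter values).

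The main obstacle is essentially no obstacle at all; the content of the lemma is the observation that when a $1$-Lipschitz function on a metric space is differentiated along a $1$-Lipschitz curve and attains the maximal slope everywhere, the curve must itself saturate the $1$-Lipschitz bound. The only thing to be careful about is the (probably typographical) issue with the stated formula, which does not affect the argument once reinterpreted as $f\circ\gamma$ having unit speed.
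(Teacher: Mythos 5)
Your proof is correct and is essentially the paper's own: both reduce immediately to the squeeze $|s-t| = |f(\gamma(s))-f(\gamma(t))| \le d_X(\gamma(s),\gamma(t)) \le |s-t|$ using only the $1$-Lipschitz continuity of $f$ and $\gamma$. Your side observations — that the dominance hypothesis is not used in the lemma itself, and that the stated formula should be read as $f\circ\gamma$ having unit slope — are both accurate and do not change the argument.
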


\begin{proof}
  The assumption and the $1$-Lipschitz continuity of $f$ and $\gamma$ together imply
  \[
  |s-t| = |f(\gamma(s))-f(\gamma(t))| \le d_X(\gamma(s),\gamma(t))
  \le |s-t|
  \]
  for any $s,t \in I$.
  This complete the proof.
\end{proof}

\begin{proof}[Proof of Theorem \ref{thm:isop-rigidity}]
  Let $X$ be an essentially connected geodesic mm-space with
  fully supported Borel probability measure
  such that $X$ satisfies $\IC(\nu)$ for a measure $\nu \in \mathcal{V}_\lambda$.
  Theorem \ref{thm:isoTFAE} implies
  that $\nu$ is an iso-dominant of $X$.
  We therefore have
  \[
  \ObsVar_\lambda(X) \le \Var_\lambda(\nu).
  \]
  We assume the equality of the above.
  By Lemma \ref{lem:f}, there is
  a $1$-Lipschitz function $f : X \to \R$
  such that $f_*\mu_X$ coincides with $\nu$ up to an isometry of $\R$.
  Applying Theorem \ref{thm:max-distr}
  for $X$ (= $\tilde{X}$) and $f$ yields one of (1), (2), and (3)
  of Theorem \ref{thm:max-distr}.

  In the case of (2), we prove that for any point $x \in X$
  there is a ray emanating from the minimizer $p$ of $f$ and passing through $x$.
  In fact, we have a minimal geodesic from $p$ to $x$, say $\gamma$.
  We extend $\gamma$ to a maximal one as a minimal geodesic from $p$.
  If $\gamma$ is not a ray,  then
  it extends beyond $x$ by (2-b), which is a contradiction to the maximality of $\gamma$.
  Thus, $X$ is covered by rays emanating from $p$.
  
  In the case of (3), the discussion using (3-a)
  proves that $X$ is covered by the a family of normal straight lines
  $\gamma_\lambda$, $\lambda \in \Lambda$,
  such that
  \begin{equation} \label{eq:f-gamma}
    f(\gamma_\lambda(t)) = f(\gamma_\lambda(0)) + t
  \end{equation}
  for any $t \in \R$ and $\lambda \in \Lambda$.
  Assume that $\gamma_\lambda$ and $\gamma_{\lambda'}$ have
  a crossing point $\gamma_\lambda(a) = \gamma_{\lambda'}(b)$.
  Let $\sigma(t) := \gamma_\lambda(t)$ for $t \le a$
  and $\sigma(t) := \gamma_{\lambda'}(t-a+b)$ for $t > a$.
  Then, $\sigma : \R \to X$ is a $1$-Lipschitz curve.
  It follows from \eqref{eq:f-gamma} that
  \[
  f(\sigma(t)) = f(\sigma(0)) + t
  \]
  for any $t \in \R$.
  Lemma \ref{lem:f-gamma} yields that $\sigma$ is a minimal normal straight line,
  i.e., the crossing point $\gamma_\lambda(a) = \gamma_{\lambda'}(b)$
  is a branch point of $\gamma_\lambda$ and $\gamma_{\lambda'}$.
  This completes the proof of Theorem \ref{thm:isop-rigidity}.
\end{proof}

For the proof of the splitting theorem, we need the following lemma.

\begin{lem} \label{lem:splitting}
Let $X$ be a complete and connected Riemannian manifold
with a fully supported smooth probability measure $\mu_X$
and let $\nu \in \cV_\lambda$,
where $\lambda : [\,0,+\infty\,) \to [\,0,+\infty\,)$ is a strictly monotone
increasing continuous function.
If $X$ satisfies $\IC(\nu)$ and if $\Var_\lambda(f) = \Var_\lambda(\nu)$
for a $1$-Lipschitz function $f : X \to \R$, then
$f$ is a $C^\infty$ isoparametric function satisfying
$|\nabla f| = 1$ everywhere on $X$.
\end{lem}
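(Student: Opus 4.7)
The plan is to show first that every sublevel set of $f$ is an isoperimetric minimizer, then to upgrade $f$ to a smooth function with $|\nabla f|\equiv 1$ by combining regularity theory with the foliation from Theorem \ref{thm:max-distr}, and finally to derive isoparametricity from the first variation of the weighted perimeter. By Theorem \ref{thm:isoTFAE}, $\nu$ is an iso-dominant of $X$, and the argument used in the proof of Theorem \ref{thm:isop-rigidity} (via Lemma \ref{lem:f}) lets us assume, after composing $f$ with an isometry of $\R$, that $f_*\mu_X = \nu$; in particular $\mu_X(\{f\le t\}) = V(t)$ for every $t$. The $1$-Lipschitz continuity of $f$ gives $U_\varepsilon(\{f\le t\}) \subset \{f\le t+\varepsilon\}$, whence $\mu_X^+(\{f\le t\}) \le V'(t)$ for $\mathcal L^1$-a.e.\ $t$. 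Combined with $\IC(\nu)$, i.e.\ $V'(t)\le I_X(V(t))$ a.e., and the trivial bound $I_X(V(t)) \le \mu_X^+(\{f\le t\})$, we obtain
\[
\mu_X^+(\{f\le t\}) = V'(t) = I_X(V(t)) \quad \text{for a.e. } t,
\]
so every sublevel set attains the isoperimetric profile of $X$.

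Now Theorem \ref{thm:max-distr} applied to $f$ provides, through every point of $X$, a minimal unit-speed geodesic along which $f$ grows at unit speed; this forces $|\nabla f|=1$ wherever $f$ is differentiable, hence $\mu_X$-a.e.\ by Rademacher's theorem. For smoothness, I would invoke the regularity theory for weighted isoperimetric minimizers in smooth Riemannian manifolds (Morgan; Gonz\'alez--Massari--Tamanini): the boundary $\{f=t\}$ of the minimizer $\{f\le t\}$ is a smooth hypersurface away from a singular set of codimension at least eight. The foliation by the minimal normal geodesics, which meet each level set orthogonally by $|\nabla f|=1$, fills $X$ without focal points; this removes the singular set and presents $f$ locally as the signed distance to a smooth hypersurface. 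Hence $f \in C^\infty(X)$ with $|\nabla f|\equiv 1$ everywhere.

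For isoparametricity, the first variation of the weighted perimeter along the unit normal $\nabla f$ reads
\[
\frac{d}{dt}\,\mu_X^+(\{f\le t\}) = \int_{\{f=t\}} \Delta_\mu f \; dA_\mu,
\]
where $\Delta_\mu$ is the weighted Laplacian associated to $\mu_X$ and $dA_\mu$ the weighted area element on $\{f=t\}$. Since $\mu_X^+(\{f\le t\}) = V'(t)$ depends only on $t$, and every sublevel set is an isoperimetric minimizer, any volume-preserving normal variation supported on a single level set leaves the weighted perimeter stationary to first order; this forces $\Delta_\mu f$ to be constant on each level set, which is exactly the isoparametric condition in the weighted sense natural to this setting. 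The main technical obstacle is the appeal to weighted isoperimetric regularity to pass from the measure-theoretic equality of boundary measures to pointwise smoothness of the level sets; once that is in hand, the remaining claims reduce to Rademacher's theorem and the classical first-variation identity along the unit-speed gradient flow of $f$.
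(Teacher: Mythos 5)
Your proposal follows essentially the same route as the paper's proof: reduce to $f_*\mu_X=\nu$ via Theorem \ref{thm:isoTFAE}, show that the sublevel sets of $f$ realize the isoperimetric profile, invoke regularity theory for weighted isoperimetric minimizers to get smooth level hypersurfaces away from a small singular set (the paper cites Almgren; you cite Morgan and Gonz\'alez--Massari--Tamanini, which is the same circle of results), use the geodesic foliation from Theorem \ref{thm:max-distr} to rule out singularities and focal points, and obtain isoparametricity from the first variation of weighted perimeter. The Rademacher step is harmless but redundant once smoothness is established.

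One detail deserves tightening: your differentiation argument only yields $\mu_X^+(\{f\le t\})=V'(t)=I_X(V(t))$ for $\mathcal L^1$-a.e.\ $t$, since $V'$ need not exist everywhere, whereas the regularity step requires $\{f\le t\}$ to be a minimizer for every $t$ (or at least for the level at hand). The paper sidesteps this by invoking Theorem \ref{thm:max-distr} to get $U_\varepsilon(\{f\le a\})=\{f<a+\varepsilon\}$ for all $a,\varepsilon$, and then the L\'evy-type comparison $\ICL(\nu)$ gives $\mu_X(U_\varepsilon(A))\ge V(a+\varepsilon)\ge\mu_X(U_\varepsilon(\{f\le a\}))$ for any competitor $A$ with $\mu_X(A)=V(a)$, hence minimality for every $a$. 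You can either insert that argument, or upgrade your a.e.\ statement to every $t$ by flowing a nearby regular level set along the focal-point-free geodesic foliation. With that patch, the proof is sound and matches the paper.
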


\begin{proof}
Theorem \ref{thm:isoTFAE} tells us that
the distribution of $f$ coincides with $\nu$ up to an isometry of $\R$
and is an iso-dominant of $X$.
By Theorem \ref{thm:max-distr},
we have $U_\varepsilon(f^{-1}(\,-\infty,a\,])) = f^{-1}(\,-\infty,a+\varepsilon\,])$
for any $a \in \R$ and $\varepsilon > 0$,
so that the sublevel sets of $f$ realize the isoperimetric profile of $X$.
The first variation formula of weighted area
(see \cite{Mg:gmt}*{\S 18.9} and \cite{Gmv:isop-waist}*{\S 9.4.E})
proves that each level set of $f$ has constant weighted mean curvature
with respect to the weight $\mu_X$.
By the result of \cite{Alm},
each level set of $f$ is a hypersurface possibly with singularities.
However, by Theorem \ref{thm:max-distr}(3), the level sets of $f$ are all perpendicular to 
the minimal geodesics foliating $X$.
Thus, there are no singularity in the level sets of $f$ and also
no focal points to the level sets.
Therefore, $f$ is of $C^\infty$ and $|\nabla f| = 1$ everywhere on $X$.
As a result, $f$ turns out to be an isoparametric function on $X$.
\end{proof}

\begin{proof}[Proof of Theorem \ref{thm:splitting}]
We apply Theorem \ref{thm:isop-rigidity} for the one-dimensional
standard Gaussian measure $\gamma^1$ on $\R$ as $\nu$.
Let $f : X \to \R$ be a $1$-Lipschitz function attaining
the $\lambda$-observable variance of $X$.
By Lemma \ref{lem:splitting}, the function $f$ is a $C^\infty$
isoparametric function with $|\nabla f| = 1$ everywhere.
By translating $f$ if necessary, the distribution of $f$ coincides with $\gamma^1$.
The weighted area of $f^{-1}(t)$ with respect to $\mu_X$ is
\[
A(t) := \frac{1}{\sqrt{2\pi}} e^{-t^2/2}.
\]
We have $A'(t) = -t A(t)$.
Since the weighted mean curvature coincides with the drifted Laplacian of $f$,
we see $A'(t) = Lf(x) A(t)$ for $x \in f^{-1}(t)$,
where $L := \Delta - \nabla \psi$ is the drifted Laplacian on $X$
with respect to the weight function $e^{-\psi}$ of $X$.
We therefore have $Lf = -f$.
The Bochner-Weizenb\"ock formula
\[
L\frac{|\nabla f|^2}{2} - \langle\nabla f,\nabla L f\rangle
= \|\Hess f\|_{HS}^2 + \Ric_{\mu_X}(\nabla f,\nabla f)
\]
(see \cite{Vl:oldnew}*{the next to (14.46)})
leads us to $\Hess f = o$.
The manifold $X$ splits as $Y \times \R$ (see \cite{Inm:splitting}),
where $Y$ is a complete Riemannian manifold.
Let $d\mu_X(y,t) = e^{-\psi(y,t)} d\vol_Y(y) dt$ in the coordinate
$(y,t) \in Y \times \R$.
Since $\Ric(\nabla f,\nabla f) = 0$, we have
\[
1 = \Ric_{\mu_X}(\nabla f,\nabla f) = \Hess\psi(\nabla f,\nabla f)
= \frac{\partial^2}{\partial t^2} \psi(y,t),
\]
which implies $\psi(y,t) = \psi(y,0) + t^2/2$.
Defining the weight of $Y$ as
$d\mu_Y(y) := e^{-\psi(y,0)} \, d\vol_Y(y)$,
we obtain the theorem.
\end{proof}

\begin{rem} \label{rem:eigenObsVar}
  We see that
  the first nonzero eigenvalue (or the spectral gap) $\lambda_1$ of the drifted Laplacian
  on a complete Riemannian manifold $X$ with a full supported Borel probability measure
  satisfies
  \begin{equation} \label{eq:eigenObsVar}
  \lambda_1 \le \frac{1}{\ObsVar_{t^2}(X)}.
  \end{equation}
  In fact, since the energy of any $1$-Lipschitz function on $X$ is not greater than one,
  the Rayleigh quotient of any $1$-Lipschitz function is not greater than
  the inverse of the variance of it, which proves \eqref{eq:eigenObsVar}.

  Assume that a complete and connected Riemannian manifold $X$
  has Bakry-\'Emery Ricci curvature bounded below by one.
  In the case where $\ObsVar_{t^2}(X) = 1 \ (= \Var_{t^2}(\gamma^1))$,
  the inequality \eqref{eq:eigenObsVar} implies $\lambda_1 \le 1$.
  Thus, Theorem \ref{thm:splitting} for $\lambda(t) = t^2$ is also derived from the following.
  \begin{thm}[Cheng-Zhou \cite{CZ:eigen}]
    Let $X$ be a complete and connected Riemannian manifold
    with a fully supported smooth measure $\mu_X$ of Bakry-\'Emery
    Ricci curvature bounded below by one.
    Then, the drifted Laplacian has only discrete spectrum
    and we have
    \[
    \lambda_1 \ge 1.
    \]
    The equality holds only if $X$ is isometric to $Y \times \R$
  and $\mu_X = \mu_Y \otimes \gamma^1$ up to an isometry,
  where $Y$ is a complete Riemannian manifold
  with a smooth probability measure $\mu_Y$
  of Bakry-\'Emery Ricci curvature bounded below by one.
  \end{thm}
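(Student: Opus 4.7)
My plan is to exploit the Bochner--Weitzenb\"ock formula for the drifted Laplacian $L=\Delta-\nabla\psi\cdot\nabla$, where $d\mu_X=e^{-\psi}\,d\vol_X$, combined with the hypothesis $\Ric_{\mu_X}\ge 1$, in the classical Bakry--\'Emery style. For the discreteness of the spectrum, I would first deduce from $\Ric_{\mu_X}\ge 1$ a logarithmic Sobolev (and hence Poincar\'e) inequality for $\mu_X$ together with Gaussian-type concentration of the distance function; standard functional analysis then gives compactness of the resolvent of $-L$ on $L^2(\mu_X)$, so that the spectrum is purely discrete.

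For the lower bound $\lambda_1\ge 1$, I would take a first nonconstant eigenfunction $f$ with $-Lf=\lambda_1 f$ and plug it into the weighted Bochner identity
\begin{equation*}
L\tfrac{1}{2}|\nabla f|^2 \;=\; \|\Hess f\|_{HS}^2 + \langle\nabla f,\nabla Lf\rangle + \Ric_{\mu_X}(\nabla f,\nabla f).
\end{equation*}
Integrating against $\mu_X$, using $\int L\varphi\,d\mu_X=0$ on the left-hand side and $\langle\nabla f,\nabla Lf\rangle=-\lambda_1|\nabla f|^2$ on the right, yields
\begin{equation*}
\lambda_1\int|\nabla f|^2\,d\mu_X \;=\; \int\|\Hess f\|_{HS}^2\,d\mu_X + \int\Ric_{\mu_X}(\nabla f,\nabla f)\,d\mu_X.
\end{equation*}
Applying $\Ric_{\mu_X}\ge 1$ and subtracting the $|\nabla f|^2$-integral gives $(\lambda_1-1)\int|\nabla f|^2\,d\mu_X\ge\int\|\Hess f\|_{HS}^2\,d\mu_X\ge 0$, whence $\lambda_1\ge 1$.

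In the equality case the inequality chain collapses, forcing $\Hess f\equiv 0$; thus $\nabla f$ is parallel and $|\nabla f|$ a positive constant, which after rescaling we take to be $1$. A de Rham--type argument for a complete manifold carrying a unit parallel vector field then produces an isometric splitting $X\cong Y\times\R$ with $f(y,t)=t$. In these coordinates $\Delta f=0$, so $Lf=-\partial_t\psi$, and the equation $Lf=-f$ integrates to $\psi(y,t)=g(y)+t^2/2$; hence $d\mu_X=e^{-g(y)}\,d\vol_Y(y)\otimes e^{-t^2/2}\,dt$, which after normalization reads $\mu_Y\otimes\gamma^1$. The bound $\Ric_{\mu_Y}\ge 1$ is inherited because $\Ric_{\mu_X}$ splits block-diagonally on $TY\oplus\R\partial_t$ and already satisfies $\Ric_{\mu_X}\ge 1$. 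The main technical obstacle I anticipate is justifying the integration by parts on the noncompact $X$: verifying that $L\tfrac{1}{2}|\nabla f|^2\in L^1(\mu_X)$ with no boundary contribution at infinity requires a careful cutoff/exhaustion argument, supported by the Gaussian concentration coming from the log-Sobolev inequality; this same input also underlies the discrete-spectrum step.
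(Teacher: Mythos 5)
The paper does not prove this statement at all; it is cited directly to Cheng--Zhou \cite{CZ:eigen} as a black box, and the authors only use it to rederive Theorem~\ref{thm:splitting} in the case $\lambda(t)=t^2$. So there is no in-paper argument to compare against. Evaluating your proposal on its own terms: the Lichnerowicz step and the rigidity step are correct and standard. Integrating the weighted Bochner identity against $\mu_X$, using $\int Lu\,d\mu_X=0$, $\langle\nabla f,\nabla Lf\rangle=-\lambda_1|\nabla f|^2$, and $\Ric_{\mu_X}\ge 1$ gives $(\lambda_1-1)\int|\nabla f|^2\,d\mu_X\ge\int\|\Hess f\|_{HS}^2\,d\mu_X\ge 0$, hence $\lambda_1\ge 1$; in the equality case $\Hess f\equiv 0$ and $\Ric_{\mu_X}(\nabla f,\nabla f)\equiv|\nabla f|^2$, so $\nabla f$ is a nonzero parallel (hence Killing) field, $|\nabla f|$ is a positive constant that can be normalized to $1$, and the gradient flow gives the isometric splitting $X\cong f^{-1}(0)\times\R$ with $f(y,t)=t$ (no simple connectivity is needed because $f$ is a globally defined primitive of the parallel field). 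Your computation $\partial_t\psi=t$ from $Lf=-f$, $\Delta f=0$ correctly pins down $\psi(y,t)=\psi(y,0)+t^2/2$ with no linear-in-$t$ term, and the block-diagonal splitting of $\Ric_{\mu_X}$ does give $\Ric_{\mu_Y}\ge 1$. These steps in fact parallel the paper's own proof of Theorem~\ref{thm:splitting}.

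The one genuine soft spot is the discreteness of the spectrum. Your claim that a log-Sobolev inequality plus Gaussian concentration of the distance function, together with ``standard functional analysis,'' yields compactness of the resolvent of $-L$ is not correct as stated: a log-Sobolev inequality alone does not imply that the embedding $H^1(\mu_X)\hookrightarrow L^2(\mu_X)$ is compact or that $-L$ has compact resolvent. What is actually needed is something strictly stronger, such as a super-Poincar\'e inequality with a rate function tending to $+\infty$, or the observation (after the ground-state transform taking $L$ on $L^2(\mu_X)$ to a Schr\"odinger operator $\Delta-V$ on $L^2(\vol_X)$) that the effective potential $V=\tfrac14|\nabla\psi|^2-\tfrac12\Delta\psi$ tends to $+\infty$; Cheng--Zhou establish exactly such a quantitative estimate from $\Ric_{\mu_X}\ge 1$ rather than invoking log-Sobolev. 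Since you also need discreteness to even have an eigenfunction realizing $\lambda_1$ in the rigidity step, you should either prove the super-Poincar\'e/potential-growth estimate or cite it explicitly; as written, this part is a hand-wave, and the rest of your argument depends on it. The integration-by-parts issue you flag is real but routine once the decay estimates behind the discreteness result are in place.
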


  If $\ObsVar_{t^2}(X) = 1$, then the function
  $f : X = Y \times \R \to \R$ defined by $f(y,t) = t$
  attains the observable variance of $X$ and also is
  an eigenfunction for $\lambda_1 = 1$.
\end{rem}

We prove Theorem \ref{thm:diam}.

\begin{proof}[Proof of Theorem \ref{thm:diam}]
  We assume that $X$ as in the theorem satisfies $\IC(\nu)$
  for a measure $\nu \in \cV$ with compact support.
  Theorem \ref{thm:isoTFAE} tells us that $\nu$ is a dominant of $X$.
  Let $\varphi : X \to \R$ be any $1$-Lipschitz function.
  Since $\varphi_*\mu_X$ is dominated by $\nu$,
  we have $\diam \varphi(X) = \diam\supp\varphi_*\mu_X \le \diam\supp\nu$.
  This implies $\diam X \le \diam\supp\nu$.
  
  Assume $\diam X = \diam \supp\nu$.
  By the compactness of $X$, there is a pair of points $p,q \in X$
  attaining the diameter of $X$.
  Letting $f := d_X(p,\cdot)$, we have
  $\diam\supp f_*\mu_X = \diam f(X) = \diam X = \diam \supp\nu$,
  which together with $f_*\mu_X \prec \nu$
  proves that $f_*\mu_X$ and $\nu$ coincide with each other
  up to an isometry of $\R$ and,
  in particular, $\ObsVar_\lambda(X) \ge \Var_\lambda(f) = \Var_\lambda(\nu)$.
  Since $\nu$ is a dominant of $X$,
  we obtain $\ObsVar_\lambda(X) = \Var_\lambda(\nu)$.
  
  Conversely, we assume $\ObsVar_\lambda(X) = \Var_\lambda(\nu)$.
  By Lemma \ref{lem:f}, we find a $1$-Lipschitz function $f : X \to \R$
  such that $f_*\mu_X = \nu$.
  We therefore have
  $\diam X \ge \diam f(X) = \diam\supp f_*\mu_X = \diam\supp\nu$,
  so that $\diam X = \diam\supp\nu$.
  This completes the proof.
\end{proof}
\section{Cheeger Constant and Isoperimetric Comparison Condition}

\begin{defn}[Cheeger constant] \label{defn:Cheeger-const}
  The \emph{Cheeger constant} $h(X)$ of an mm-space $X$ is defined by
  \[
  h(X) := 
  \inf_{0 < \mu_X(A) < 1} \frac{\mu_X^+(A)}{\min\{\mu_X(A),\mu_X(X \setminus A)\}}.
  \]
\end{defn}

The purpose of this section is to prove the following proposition,
which is useful to obtain an mm-space with the isoperimetric comparison
condition.  We have an application of this proposition in Section \ref{ssec:non-splitting}.

\begin{prop} \label{prop:Cheeger}
  Let $X$ be an mm-space with positive Cheeger constant.
  Then, $X$ is essentially connected and
  satisfies $\IC(\nu)$ for some measure $\nu \in \cV$.
  If, in addition, $I_X$ is Lebesgue measurable, then
  \[
  I_X \circ V = V'  \quad\text{$\cL^1$-a.e.}
  \]
  for some $\nu \in \cV$.
\end{prop}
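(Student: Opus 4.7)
The proof naturally splits into three pieces, and I would handle them in the order stated.

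First, essential connectivity is immediate from the definition of the Cheeger constant: for any closed set $A \subset X$ with $0 < \mu_X(A) < 1$, the defining inequality gives
\[
\mu_X^+(A) \;\ge\; h(X)\,\min\{\mu_X(A),\mu_X(X\setminus A)\} \;>\; 0.
\]
The same inequality, read for Borel sets of prescribed measure $v \in \Image\mu_X\cap(0,1)$, rewrites as the pointwise lower bound $I_X(v) \ge h(X)\min\{v,1-v\}$, which is the workhorse for both remaining parts.

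Second, for the existence of some $\nu \in \cV$ satisfying $\IC(\nu)$, I would take $\nu$ to be the Laplace-type measure with density $\rho(t) := \tfrac{h(X)}{2}e^{-h(X)|t|}$ on $\R$. Its cumulative distribution function $V$ satisfies, by direct computation, $V'(t) = h(X)\min\{V(t),1-V(t)\}$ for every $t \in \R$. Combined with the Cheeger bound, substituting $v = V(t)$ yields $I_X\circ V(t) \ge V'(t)$ on $V^{-1}(\Image\mu_X)$, which is exactly $\IC(\nu)$. Since $\nu$ is absolutely continuous with respect to $\cL^1$ and has connected support $\R$, we have $\nu \in \cV$.

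Third, for the refined equality $I_X \circ V = V'$ a.e.\ under the additional Lebesgue measurability of $I_X$, I would replace the explicit Laplace ODE by the ODE $V' = I_X \circ V$ and solve it by inversion. Extend $I_X$ to a strictly positive Lebesgue measurable function $\tilde I$ on $(0,1)$ by setting $\tilde I(v) := h(X)\min\{v,1-v\}$ off $\Image\mu_X$; then still $\tilde I(v) \ge h(X)\min\{v,1-v\}$ on $(0,1)$, so $1/\tilde I$ is locally integrable on $(0,1)$. Define
\[
\phi(v) := \int_{1/2}^{v} \frac{dw}{\tilde I(w)}, \qquad v \in (0,1),
\]
a strictly increasing absolutely continuous homeomorphism from $(0,1)$ onto some open interval $(a,b) \subset \R \cup \{\pm\infty\}$. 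Let $V$ be the extension of $\phi^{-1} : (a,b) \to (0,1)$ to all of $\R$ by $0$ on $(\,-\infty,a\,]$ and by $1$ on $[\,b,+\infty\,)$. Then $V$ is the cumulative distribution function of some $\nu \in \cV$ with support $[a,b]\cap\R$, and the inverse-function calculus for absolutely continuous strictly monotone functions yields $V'(t) = \tilde I(V(t))$ for $\cL^1$-a.e.\ $t$; this coincides with $I_X\circ V$ a.e.\ on $V^{-1}(\Image\mu_X)$.

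The step I expect to require the most care is the third: for merely measurable $I_X$, one has to justify that $\phi^{-1}$ is absolutely continuous on $(a,b)$ and that the chain-rule identity $V' = \tilde I \circ V$ really does hold $\cL^1$-almost everywhere, as well as to confirm that the a.e.\ equation is interpreted exactly in the sense of the definition of $\IC(\nu)$ on $V^{-1}(\Image\mu_X)$. Once that is set up, everything reduces cleanly to the Cheeger bound together with standard properties of monotone absolutely continuous functions, and the other two parts are essentially one-line consequences of the definition.
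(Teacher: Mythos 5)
Your approach is essentially the one the paper takes: the Cheeger bound $I_X(v)\ge h(X)\min\{v,1-v\}$ (which is the paper's Lemma \ref{lem:Cheeger_varphi}) feeds into an ODE construction $V'=\varphi\circ V$ solved by inverting $\phi(v)=\int_{1/2}^v dw/\varphi(w)$ (which is the paper's Lemma \ref{lem:construct}, applied once with $\varphi=h(X)\min\{v,1-v\}$ and once with $\varphi=I_X$). Your explicit Laplace measure for the second part is precisely what Lemma \ref{lem:construct} produces for the first choice of $\varphi$, just computed in closed form, and your extension $\tilde I$ is a sensible way to make $I_X$ defined on all of $(0,1)$ before feeding it into the lemma.

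The one place where the paper does something you should compare against is exactly the step you flag as delicate. Rather than proving directly that $V=\phi^{-1}$ is absolutely continuous and then invoking the chain rule for inverses of monotone absolutely continuous functions (which would work, but needs the Banach--Zarecki characterization and the injective change-of-variables formula to rule out the Cantor-type pathology), the paper's Lemma \ref{lem:construct} computes the pushforward $V_*\cL^1$ and observes that $\nu([a,b])=\int_a^b \varphi\circ V\,d\cL^1$ for all $a\le b$; this exhibits $\nu$ as absolutely continuous with density $\varphi\circ V$ outright, and $V'=\varphi\circ V$ a.e.\ then follows from $V$ being the cdf of an absolutely continuous measure. So the gap you identified is real but fillable, and the paper's route is the slicker way to fill it.
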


We refer to \cite{Rt:conti}*{Section 1} for the descriptions for several works concerning
the regularity of the isoperimetric profile of a Riemannian manifold.
E. Milman \cite{EMlm:role}*{Lemma 6.9} proved
the $(n-1)/n$-H\"older continuity of the isoperimetric profile
of a complete and connected Riemannian manifold
with an absolutely continuous probability measure with respect to the volume measure
such that its density is bounded from above on every ball.
This together with Proposition \ref{prop:Cheeger}
implies the following.

\begin{cor} \label{cor:Cheeger}
  Let $X$ be a complete and connected Riemannian manifold
  and $\mu_X$ a fully supported probability measure on $X$ absolutely continuous
  with respect to the volume measure such that its density is bounded from above
  on every ball in $X$.
  Assume that $(X,\mu_X)$ has positive Cheeger constant.
  Then there exists a measure $\nu \in \cV$ such that
  \[
  I_X \circ V = V'  \quad\text{$\cL^1$-a.e.}
  \]
\end{cor}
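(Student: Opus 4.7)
The plan is to reduce the corollary directly to Proposition \ref{prop:Cheeger} by verifying that its hypothesis of Lebesgue measurability of $I_X$ is satisfied under the Riemannian setup. Since the corollary is stated as a consequence of Proposition \ref{prop:Cheeger} combined with E.~Milman's regularity result, the proof is essentially a chain of citations, and the main work is to check that the assumptions line up.

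First I would recall the Riemannian data: $X$ is complete and connected and $\mu_X = \rho\,d\vol_X$ with $\rho$ bounded from above on every metric ball in $X$. These are exactly the hypotheses of E.~Milman's lemma (\cite{EMlm:role}*{Lemma 6.9}), which gives that $I_X$ is $(n-1)/n$-H\"older continuous on $\Image\mu_X$, where $n = \dim X$. In particular $I_X$ is continuous, hence Lebesgue measurable on $\Image\mu_X$.

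Next, since $(X,\mu_X)$ has positive Cheeger constant by assumption, the hypotheses of the second (stronger) conclusion of Proposition \ref{prop:Cheeger} are met. That proposition then produces a measure $\nu \in \cV$ with cumulative distribution function $V$ such that
\[
I_X \circ V = V' \quad\text{$\cL^1$-a.e.},
\]
which is exactly the conclusion of the corollary.

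The only step requiring any care is confirming that Milman's lemma applies verbatim to our setting, i.e.\ that ``density bounded above on every ball'' in our statement matches the local boundedness hypothesis in \cite{EMlm:role}*{Lemma 6.9}; this is immediate from the way the hypothesis is phrased. There is no genuine obstacle here: the content of the corollary is packaged entirely into Proposition \ref{prop:Cheeger} and Milman's regularity lemma, and the proof simply invokes them in sequence.
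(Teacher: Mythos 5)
Your proposal is correct and follows the same route the paper indicates in the text immediately preceding the corollary: Milman's Lemma~6.9 gives $(n-1)/n$-H\"older continuity, hence Lebesgue measurability, of $I_X$, and then the second part of Proposition~\ref{prop:Cheeger} applies verbatim.
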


For the proof of the proposition, we prove a lemma.

\begin{lem}\label{lem:construct}
  Let $\varphi : (\,0,1\,) \to [\,0,+\infty\,)$ be a Lebesgue measurable function
  such that $1/\varphi$ is locally integrable on $(\,0,1\,)$.
  Then, there exists a measure $\nu \in \cV$
  such that
  \[
  \varphi \circ V = V' \quad\text{$\cL^1$-a.e.},
  \]
  where $V$ is the cumulative distribution function of $\nu$.
\end{lem}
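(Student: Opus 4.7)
The plan is to solve, at the level of the cumulative distribution function, the separable ODE $V'(t)=\varphi(V(t))$ with boundary behavior $V(-\infty)=0$, $V(+\infty)=1$, through the implicit relation $W(V(t))=t$ where $W$ is a primitive of $1/\varphi$. Concretely, I will define
\[
W(s):=\int_{1/2}^{s}\frac{du}{\varphi(u)},\qquad s\in(0,1),
\]
which is well-defined and absolutely continuous on every compact subinterval of $(0,1)$ by the local integrability hypothesis, with $W'=1/\varphi$ almost everywhere. Since $\varphi$ takes values in $[\,0,+\infty\,)$, local integrability forces $\{\varphi=0\}$ to be $\cL^1$-null, hence $W'>0$ a.e.\ and $W$ is strictly increasing. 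Setting $a:=\lim_{s\to 0+}W(s)\in[\,-\infty,0\,)$ and $b:=\lim_{s\to 1-}W(s)\in(\,0,+\infty\,]$, the map $W$ becomes a homeomorphism $(0,1)\to(a,b)$. I then define $V:\R\to[\,0,1\,]$ as $W^{-1}$ on $(a,b)$, extended by $0$ on $(\,-\infty,a\,]$ and by $1$ on $[\,b,+\infty\,)$. This $V$ is continuous, nondecreasing, and satisfies $V(\mp\infty)\in\{0,1\}$, so it is the cumulative distribution function of a unique Borel probability measure $\nu$ on $\R$ whose support is the connected set $\overline{(a,b)}\cap\R$.

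The main step is to verify that $V$ is locally absolutely continuous on $(a,b)$, so that $\nu\ll\cL^1$ and therefore $\nu\in\cV$. Inverses of AC strictly monotone functions need not be AC in general; the positivity of $W'$ is what saves us here. Indeed, the change-of-variables (area) formula for the AC function $W$ gives $\cL^1(W(E))=\int_{E}W'\,d\cL^1$ for every Borel $E\subset(0,1)$, so $W'>0$ a.e.\ implies that $W$ maps positive-measure sets to positive-measure sets. Equivalently, $V=W^{-1}$ maps $\cL^1$-null subsets of $(a,b)$ to $\cL^1$-null sets, i.e., satisfies Lusin's $N$ condition; together with continuity and monotonicity this yields the desired local absolute continuity.

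Once $V$ is known to be locally AC, the identity $\varphi\circ V=V'$ a.e.\ is obtained by a pointwise inverse-function argument. Let $N\subset(0,1)$ be the $\cL^1$-null set on which $W$ fails to be differentiable, or $W'\notin(0,+\infty)$, or $W'\neq 1/\varphi$. Lusin's $N$ condition for the AC map $W$ makes $W(N)\subset(a,b)$ null, and for every $t\in(a,b)\setminus W(N)$ the elementary inverse-function rule gives
\[
V'(t)=\frac{1}{W'(V(t))}=\varphi(V(t)).
\]
Outside $(a,b)$ the function $V$ is constant, so $V'=0$ there a.e.\ as well. The main obstacle of the whole argument is the absolute-continuity check in the second paragraph; once that is secured, everything else reduces to the definitions and a standard change of variables.
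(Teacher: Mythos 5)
Your proof is correct and starts from the same primitive $W=\rho=\int_{1/2}^{\cdot}\frac{d\cL^1}{\varphi}$ and inverse $V$ as the paper, but the way you establish absolute continuity and the identity $\varphi\circ V=V'$ is genuinely different. The paper computes the pushforward $V_*\cL^1=(1/\varphi)\,\cL^1$ and then, via the change-of-variables formula for pushforwards, shows $\nu([\,c,d\,])=V(d)-V(c)=\int_{[\,c,d\,]}\varphi\circ V\,d\cL^1$ for intervals $[\,c,d\,]$ in the domain of $V$; this exhibits $\varphi\circ V$ as a density of $\nu$ at once, and $\varphi\circ V=V'$ a.e.\ then follows from uniqueness of the density and Lebesgue differentiation, with no separate absolute-continuity argument for $V$. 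You instead single out local absolute continuity of $V$ as the crux, proving it through Lusin's $N$ condition (from the area formula $\cL^1(W(E))=\int_E W'\,d\cL^1$ together with $W'>0$ a.e.) and the Banach--Zarecki criterion, and then recover $\varphi\circ V=V'$ a.e.\ pointwise by the inverse-function rule off the null set $W(N)$. Both arguments are sound; your route makes explicit the subtlety that inverses of absolutely continuous monotone maps need not be absolutely continuous, which the paper's measure-level computation handles implicitly, while the paper's argument is shorter and avoids invoking Banach--Zarecki.
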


\begin{proof}
  Let $d\mu(t) := (1/\varphi(t)) \, dt$ on $(\,0,1\,)$.
  Note that $\varphi > 0$ $\cL^1$-a.e.
  By the assumption, $\mu$ is absolutely continuous with respect to $\cL^1$.
  We define a function $\rho : (\,0,1\,) \to \R$ by
  \[
  \rho(x):=\int_{\frac 12}^x\frac {dt}{\varphi(t)}
  \]
  for $x \in (\,0,1\,)$.
  Then, $\rho$ is a strictly monotone increasing and locally absolutely continuous
  function with connected image $\Image\rho$.
  We denote by $V : \Image\rho \to (\,0,1\,)$ the inverse function of $\rho$.
  The function $V$ is also strictly monotone increasing.
  Since $\lim_{t\to(\inf\Image\rho) +0}V(t)=0$ and $\lim_{t\to(\sup\Image\rho) -0}V(t)=1$,
  there exists a Borel probability measure $\nu$ on $\R$ possessing $V$
  as its cumulative distribution function.
  For any two real numbers $a$ and $b$ with $0 < a \le b < 1$, we see that
  \[
    V_*\cL^1([\,a,b\,]) = \cL^1(\rho([\,a,b\,])) = \rho(b) - \rho(a),
  \]
  so that $d(V_*\cL^1)(t) = (1/\varphi(t))\,dt$.
  This implies that
  \begin{align*}
    \int_a^b \varphi\circ V \, d\cL^1&=\int_{V([a,b])} \varphi \, d(V_*\cL^1)
    = \int_{V([\,a,b\,])} dt \\
    &= V(b)-V(a) = \nu([a,b]).
  \end{align*}
  Thus, $\nu$ is absolutely continuous with respect to $\cL^1$
  with density $\varphi \circ V$.
  Since $V'$ is also a version of the density of $\nu$, 
  we have $\varphi \circ V = V'$ $\cL^1$-a.e.
  This completes the proof.
\end{proof}

\begin{lem} \label{lem:Cheeger_varphi}
  Let $X$ be an mm-space with positive Cheeger constant.
  Then we have the following {\rm(1)}, {\rm(2)}, and {\rm(3)}.
  \begin{enumerate}
  \item $X$ is essentially connected.
  \item There exists a Lebesgue measurable function $\varphi : (\,0,1\,) \to (\,0,+\infty\,)$
    such that
    \begin{enumerate}
    \item $\varphi \le I_X$ on $\Image\mu_X$,
    \item $1/\varphi$ is locally integrable on $(\,0,1\,)$.
    \end{enumerate}
  \item If $I_X$ is Lebesgue measurable, then $1/I_X$ is locally integrable on $(\,0,1\,)$.
  \end{enumerate}
\end{lem}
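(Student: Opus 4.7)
The cornerstone for all three parts is the pointwise lower bound
\[
I_X(v) \ge h(X)\,\min\{v,\,1-v\} \qquad \text{for all } v \in \Image\mu_X \cap (0,1),
\]
which is immediate from the definition of the Cheeger constant applied to sets realizing (or nearly realizing) the infimum in $I_X(v)$. Everything in the lemma is a consequence of this single inequality together with routine bookkeeping.

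For part (1), essential connectedness is immediate: if $A \subset X$ is closed with $0 < \mu_X(A) < 1$, then the Cheeger constant definition directly yields
\[
\mu_X^+(A) \;\ge\; h(X)\,\min\{\mu_X(A),\,1-\mu_X(A)\} \;>\; 0.
\]
No further argument is needed.

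For part (2), I would define the candidate explicitly by
\[
\varphi(v) := h(X)\,\min\{v,\,1-v\}, \qquad v \in (0,1).
\]
This $\varphi$ is continuous and strictly positive on $(0,1)$, hence Lebesgue measurable and valued in $(0,+\infty)$. Property (a) is precisely the pointwise Cheeger bound displayed above, restricted to $\Image\mu_X$ (noting that $\varphi$ is only defined on $(0,1)$, which is the only place the inequality is meaningful since $I_X(0)=I_X(1)=0$). For property (b), on any compact $K \subset (0,1)$ the quantity $\min\{v,1-v\}$ is bounded below by a positive constant depending on $K$, so $1/\varphi$ is bounded on $K$ and in particular integrable. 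Thus $1/\varphi$ is locally integrable on $(0,1)$.

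For part (3), I would use the same $\varphi$ as a dominating function. Adopt the natural convention of extending $I_X$ to $(0,1)$ by setting $I_X := +\infty$ on $(0,1) \setminus \Image\mu_X$, so that $1/I_X$ is interpreted as $0$ outside $\Image\mu_X$ and is a Lebesgue measurable function on $(0,1)$ whenever $I_X$ is. The Cheeger bound $I_X \ge \varphi$ on $\Image\mu_X$ then gives $1/I_X \le 1/\varphi$ pointwise on all of $(0,1)$, and the local integrability established in part (2) transfers to $1/I_X$. The only subtle point here—more a convention to pin down than a genuine obstacle—is the interpretation of $1/I_X$ on $(0,1) \setminus \Image\mu_X$; once this is fixed in the natural way, the domination argument finishes the proof, and nothing deeper than the Cheeger lower bound is needed.
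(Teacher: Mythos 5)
Your proposal is correct and follows essentially the same route as the paper's proof: the key inequality $I_X(v) \ge h(X)\min\{v,1-v\}$ on $\Image\mu_X\cap(0,1)$, the choice $\varphi(v) := h(X)\min\{v,1-v\}$, and domination for part (3). You supply more detail than the paper at the two points it leaves implicit — the local boundedness of $1/\varphi$ on compacta in $(0,1)$ and the convention for interpreting $1/I_X$ off $\Image\mu_X$ — but these are the same filling-in any reader would do, not a different argument.
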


\begin{proof}
  It follows from the definitions of $h(X)$ and $I_X(v)$ that
  \[
  h(X)\leq \frac{I_X(v)}{\min\{v,1-v\}}
  \]
  for any $v \in \Image \mu_X \setminus \{0,1\}$.
  Since $h(X) > 0$, we have $I_X(v) > 0$, which implies (1).
  Setting
  \[
  \varphi(v) := h(X)\,\min\{v,1-v\}
  \]
  for $v \in (\,0,1\,)$, we have (2).
  If $I_X$ is Lebesgue measurable, then the local integrability of $1/I_X$ on $(\,0,1\,)$
  follows from (2).
  This completes the proof.
\end{proof}

\begin{proof}[Proof of Proposition \ref{prop:Cheeger}]
  The proposition follows from Lemmas \ref{lem:construct} and \ref{lem:Cheeger_varphi}.
\end{proof}

\section{Examples}

\subsection{Warped product} \label{ssec:warped-prod}
We take a compact $n$-dimensional Riemannian manifold $F$ with Riemannian metric $g$
and a smooth function $\varphi : (\,a,b\,) \to (\,0,+\infty\,)$, $-\infty \le a < b \le +\infty$,
in such a way that $\int_a^b \varphi(t)^n \, dt = 1$.
Let $X$ be the completion of the warped product Riemannian manifold
$((\,a,b\,) \times M,dt^2 + \varphi(t)^2 g)$,
and $f : X \to \R$ the continuous extension of
the projection $(\,a,b\,) \times M \ni (t,x) \mapsto t \in \R$.
We equip $X$ with the probability measure
\[
d\mu_X(t,x) := dt \otimes \varphi(t)^n d\mu_F(x),
\]
where $\mu_F$ is a smooth probability measure on $F$ with full support.
Note that if the total volume of $F$ is one and if $\mu_F$ is taken to be the Riemannian volume
measure on $F$, then $\mu_X$ as defined above coincides with
the Riemannian volume measure.
We see that $f_*\mu_X = \varphi(t)^n\,dt$.

We consider the following.

\begin{asmp} \label{asmp:isop}
  Any isoperimetric domain in $X$ is
  either the sub- or super-level set of $f$.
\end{asmp}

\begin{asmp} \label{asmp:ObsVar}
  The observable $\lambda$-variance of $X$ is attained
  by the function $f$.
\end{asmp}

Under these assumptions, we have the conditions of Theorem \ref{thm:isop-rigidity}
for $d\nu(t) = \varphi(t)^n\,dt$.
Precisely, $X$ satisfies $\IC(\varphi(t)^n\,dt)$
and $f_*\mu_X = \varphi(t)^n\,dt$ is an iso-dominant of $X$.
If $a$ and $b$ are both finite, then we have (1) of Theorem \ref{thm:isop-rigidity}.
If only one of $a$ and $b$ is finite, then we have (2).
If $a$ and $b$ are both infinite, then we have (3).
In particular, we observe that $\varphi(t) \to 0$ as $t \to a$ (resp.~$t \to b$)
if $a$ (resp.~$b$) is finite,
because the minimizer (resp.~maximizer) of $f$ is unique if any.

Assumption \ref{asmp:isop}
is satisfied in the following case.
$F = S^1 = \{\,e^{i\theta} \mid \theta \in \R\,\}$, $d\mu_F(\theta) = d\theta/2\pi$, $a = -b < b < +\infty$, $\varphi(-t) = \varphi(t)$ for $t \in [\,0,b\,)$,
and the Gaussian curvature $K(t) = -\varphi''(t)/\varphi(t)$
is positive and strictly monotone decreasing in $(\,a,0\,]$.
Note that $K(-t) = K(t)$ for any $t \in (\,a,b\,)$.
By Ritor\'e's result \cite{Rt:isop-rev} we have Assumption \ref{asmp:isop} in this case.
If in addition the diameter of $X$ is equal to $2b$,
then we also have Assumption \ref{asmp:ObsVar} by Theorem \ref{thm:diam}.

Refer to \cite{Bd:warp, MHH:isop-rev}
for further potential examples of warped products.

\subsection{Non-splitting manifold containing a straight line}
\label{ssec:non-splitting}
Applying Proposition \ref{prop:Cheeger},
we obtain an example of a complete Riemannian manifold $X$
with a fully supported Borel probability measure such that
\begin{enumerate}
\item $X$ satisfies $\IC(\nu)$,
\item $X$ contains a straight line,
\item $X$ is not homeomorphic to $Y \times \R$ for any manifold $Y$.
\end{enumerate}
In fact, there is a complete Riemannian manifold $X$ satisfying (2), (3),
and with positive Cheeger constant (see, for example, \cite{Bs:Ch-const}).
By Proposition \ref{prop:Cheeger}, there is a measure $\nu \in V$
such that $X$ satisfies $\IC(\nu)$.
Note that Corollary \ref{cor:isop-rigidity} proves
\[
\ObsVar_\lambda(X) < \Var_\lambda(\nu).
\]

\section{Appendix: Variance of Spherical Model} \label{sec:Var-sigma}

In this section, we prove \eqref{eq:Var-sigma} and see some
consequent results.  We write $\Var(\cdot) := \Var_{t^2}(\cdot)$
and $\ObsVar(\cdot) := \ObsVar_{t^2}(\cdot)$.

\begin{proof}[Proof of \eqref{eq:Var-sigma}]
For $N\in [\,0,+\infty\,)$, we define
\begin{align*}
  F_N(x) &:= \int_{\frac\pi 2}^x \cos^N t \; dt,\\
  G_N(x) &:= \int_{\frac\pi 2}^x F_N(t)\;dt,\\
  H_N(x) &:= \int_{\frac\pi 2}^x G_N(t)\;dt,\\
  I_N := -F_N(0), &\qquad K_N := -H_N(0).
\end{align*}
We have
\begin{align*}
\int_0^{\frac\pi 2}x^2\cos^N x dx&=\int_0^{\frac\pi 2}x^2(F_N(x))' dx\\
&=[x^2 F_N(x)]_0^{\frac\pi 2} -2\int_0^{\frac\pi 2} x F_N(x) dx\\
&=-2\int_0^{\frac\pi 2} x (G_N(x))' dx\\
&=-2\{[x G_N(x)]_0^{\frac\pi 2}-\int_0^{\frac\pi 2} G_N(x) dx \}\\
&=-2H_N(0)=2K_N.
\end{align*}
For $N \in [\,2,+\infty\,)$, since
\[
F_N(x)=\frac 1N \cos^{N-1}x\sin x+\frac{N-1}N F_{N-2}(x)
\]
we have
\begin{align*}
G_N(x) &= -\frac 1{N^2}\cos^N x+\frac{N-1}N G_{N-2}(x),\\
H_N(x) &= -\frac 1{N^2}F_N(x)+\frac{N-1}N H_{N-2}(x).
\end{align*}
Setting $x=0$, we obtain
\begin{align*}
I_N &= \frac{N-1}N I_{N-2},\\
K_N &= -\frac 1{N^2} I_N +\frac{N-1}N K_{N-2}.
\end{align*}
Therefore, for $N \in (\,0,+\infty\,)$,
\[
I_N=I_{2h} \cdot \prod_{i=0}^{\lceil\frac N2\rceil -1} \frac {N-2i-1}{N-2i},
\]
where $h := N/2-\lceil N/2\rceil +1$.
For $N \in (\,0,+\infty\,)$, we define
\[
S_N := \sum_{i=0}^{\lceil N/2\rceil -1}\frac{1}{(N-2i)^2}.
\]
This satisfies $S_N=S_{N-2}+ 1/N^2$ for $N\in (2,\infty)$.
Since
\begin{align*}
K_N+S_N I_N&=\frac{N-1}N K_{N-2}+(S_N-\frac 1{N^2})I_N\\
&=\frac{N-1}N (K_{N-2}+S_{N-2}I_{N-2}),
\end{align*}
we have, for $N \in (\,0,+\infty\,)$,
\begin{align*}
K_N+S_N I_N&=(K_{2h}+S_{2h} I_{2h})\prod_{i=0}^{\lceil\frac N2\rceil-1}\frac{N-2i-1}{N-2i} \\
&=(K_{2h} I_{2h}^{-1}+S_{2h})I_N,
\end{align*}
so that
\[
\Var(\sigma^{N+1})=2\frac{K_N}{I_N}=2(K_{2h} I_{2h}^{-1} + S_{2h} - S_N).
\]
Putting $k := \lceil\frac N2\rceil-1-i$ in the definition of $S_N$ yields
\[
S_N = \frac 14\sum_{k=0}^{\lceil\frac N2\rceil-1}\frac{1}{(h+k)^2},
\]
which converges to $\frac 14\zeta(2,h)$ as $N \to +\infty$.
We see that
\begin{align*}
\mathrm{Var}(\sigma^{N+1})&=\frac 1{I_N}\int_0^{\frac\pi 2}x^2\cos^N x dx\\
&\leq \frac{\pi^2}{4}\cdot \frac 1{I_N}\int_0^{\frac\pi 2}\sin^2x\cos^Nx dx\\
&=\frac{\pi^2}{4}\cdot \frac{I_N-I_{N+2}}{I_N}\\
&=\frac{\pi^2}{4}\left(1-\frac{I_{N+2}}{I_N}\right)\\
&=\frac{\pi^2}{4}\left(1-\frac{N+1}{N+2}\right)\to 0 \quad \text{as $N\to\infty$}.
\end{align*}
Thus we have $K_{2h} I_{2h}^{-1} + S_{2h} =\frac 14\zeta(2,h)$
and, for $N \in (\,0,+\infty\,)$,
\[
\mathrm{Var}(\sigma^{N+1})=\frac 12 \left(\zeta(2,h) -4S_N\right).
\]
This completes the proof of \eqref{eq:Var-sigma}.
\end{proof}

From \eqref{eq:Var-sigma}, we observe that
\[
\lim_{N\to +\infty} \frac{\Var(\sigma^N)}{\sqrt{N}} = 1,
\]
which is also verified by the Maxwell-Boltzmann distribution law
(see \cite{Sy:mmg}*{Proposition 2.1})
in the case where $N$ runs over only positive integers.

We also have, for any integer $m \ge 2$,
\begin{align*}
\ObsVar(S^m(1)) = \Var(\sigma^m) =
\begin{cases}
 \frac{\pi^2}{12}-\sum_{k=1}^{n-1}\frac 2{(2k)^2} & \text{if $m = 2n-1$},\\
 \frac{\pi^2}{4}-\sum_{k=1}^n\frac 2{(2k-1)^2} &\text{if $m = 2n$}.
\end{cases}
\end{align*}

\begin{bibdiv}
  \begin{biblist}    

\bib{Alm}{article}{
   author={Almgren, F. J., Jr.},
   title={Existence and regularity almost everywhere of solutions to
   elliptic variational problems with constraints},
   journal={Mem. Amer. Math. Soc.},
   volume={4},
   date={1976},
   number={165},
   pages={viii+199},
   issn={0065-9266},
   doi={10.1090/memo/0165},
}

\bib{AGS:RiemRic}{article}{
   author={Ambrosio, Luigi},
   author={Gigli, Nicola},
   author={Savar\'e, Giuseppe},
   title={Metric measure spaces with Riemannian Ricci curvature bounded from
   below},
   journal={Duke Math. J.},
   volume={163},
   date={2014},
   number={7},
   pages={1405--1490},
   issn={0012-7094},
   doi={10.1215/00127094-2681605},
}

\bib{AbMd:Gaussian-isop}{article}{
   author={Ambrosio, Luigi},
   author={Mondino, Andrea},
   title={Gaussian-type isoperimetric inequalities in $\RCD(K,\infty)$
   probability spaces for positive $K$},
   journal={Atti Accad. Naz. Lincei Rend. Lincei Mat. Appl.},
   volume={27},
   date={2016},
   number={4},
   pages={497--514},
   issn={1120-6330},
   doi={10.4171/RLM/745},
}

\bib{BkLd:LG-isop}{article}{
   author={Bakry, D.},
   author={Ledoux, M.},
   title={L\'evy-Gromov's isoperimetric inequality for an infinite-dimensional
   diffusion generator},
   journal={Invent. Math.},
   volume={123},
   date={1996},
   number={2},
   pages={259--281},
   issn={0020-9910},
   doi={10.1007/s002220050026},
}

\bib{Bs:Ch-const}{article}{
author={Benson, Brian},
title={The Cheeger constant, isoperimetric problems, and hyperbolic surfaces},
status={arXiv:1509.08993v2},
}

%

\bib{Bd:warp}{article}{
   author={Brendle, Simon},
   title={Constant mean curvature surfaces in warped product manifolds},
   journal={Publ. Math. Inst. Hautes \'Etudes Sci.},
   volume={117},
   date={2013},
   pages={247--269},
   issn={0073-8301},
   doi={10.1007/s10240-012-0047-5},
}


\bib{CvMd:sharp-isop}{article}{
   author={Cavalletti, Fabio},
   author={Mondino, Andrea},
   title={Sharp and rigid isoperimetric inequalities in metric-measure
   spaces with lower Ricci curvature bounds},
   journal={Invent. Math.},
   volume={208},
   date={2017},
   number={3},
   pages={803--849},
   issn={0020-9910},
   review={\MR{3648975}},
   doi={10.1007/s00222-016-0700-6},
}

\bib{CG:splitting}{article}{
   author={Cheeger, Jeff},
   author={Gromoll, Detlef},
   title={The splitting theorem for manifolds of nonnegative Ricci
   curvature},
   journal={J. Differential Geometry},
   volume={6},
   date={1971/72},
   pages={119--128},
   issn={0022-040X},
   review={\MR{0303460}},
}

\bib{Ch:eigen-comp}{article}{
   author={Cheng, Shiu Yuen},
   title={Eigenvalue comparison theorems and its geometric applications},
   journal={Math. Z.},
   volume={143},
   date={1975},
   number={3},
   pages={289--297},
   issn={0025-5874},
   review={\MR{0378001}},
   doi={10.1007/BF01214381},
}

\bib{CZ:eigen}{article}{
   author={Cheng, Xu},
   author={Zhou, Detang},
   title={Eigenvalues of the drifted Laplacian on complete metric measure
   spaces},
   journal={Commun. Contemp. Math.},
   volume={19},
   date={2017},
   number={1},
   pages={1650001, 17},
   issn={0219-1997},
   review={\MR{3575913}},
   doi={10.1142/S0219199716500012},
}

\bib{FLZ:splittting}{article}{
   author={Fang, Fuquan},
   author={Li, Xiang-Dong},
   author={Zhang, Zhenlei},
   title={Two generalizations of Cheeger-Gromoll splitting theorem via
   Bakry-Emery Ricci curvature},
   language={English, with English and French summaries},
   journal={Ann. Inst. Fourier (Grenoble)},
   volume={59},
   date={2009},
   number={2},
   pages={563--573},
   issn={0373-0956},
   review={\MR{2521428}},
}



\bib{Gmv:isop-waist}{article}{
   author={Gromov, M.},
   title={Isoperimetry of waists and concentration of maps},
   journal={Geom. Funct. Anal.},
   volume={13},
   date={2003},
   number={1},
   pages={178--215},
   issn={1016-443X},
}

\bib{Gmv:green}{book}{
   author={Gromov, Misha},
   title={Metric structures for Riemannian and non-Riemannian spaces},
   series={Modern Birkh\"auser Classics},
   edition={Reprint of the 2001 English edition},
   note={Based on the 1981 French original;
   With appendices by M. Katz, P. Pansu and S. Semmes;
   Translated from the French by Sean Michael Bates},
   publisher={Birkh\"auser Boston Inc.},
   place={Boston, MA},
   date={2007},
   pages={xx+585},
   isbn={978-0-8176-4582-3},
   isbn={0-8176-4582-9},
}


\bib{Inm:splitting}{article}{
   author={Innami, Nobuhiro},
   title={Splitting theorems of Riemannian manifolds},
   journal={Compositio Math.},
   volume={47},
   date={1982},
   number={3},
   pages={237--247},
   issn={0010-437X},
}

\bib{Kt:cones}{article}{
   author={Ketterer, Christian},
   title={Cones over metric measure spaces and the maximal diameter theorem},
   language={English, with English and French summaries},
   journal={J. Math. Pures Appl. (9)},
   volume={103},
   date={2015},
   number={5},
   pages={1228--1275},
   issn={0021-7824},
   doi={10.1016/j.matpur.2014.10.011},
}

\bib{Ld:conc}{book}{
   author={Ledoux, Michel},
   title={The concentration of measure phenomenon},
   series={Mathematical Surveys and Monographs},
   volume={89},
   publisher={American Mathematical Society},
   place={Providence, RI},
   date={2001},
   pages={x+181},
   isbn={0-8218-2864-9},
}


\bib{Lch:splitting}{article}{
   author={Lichnerowicz, Andr\'e},
   title={Vari\'et\'es riemanniennes \`a tenseur C non n\'egatif},
   language={French},
   journal={C. R. Acad. Sci. Paris S\'er. A-B},
   volume={271},
   date={1970},
   pages={A650--A653},
   review={\MR{0268812}},
}

\bib{Lt:BE-Ric}{article}{
   author={Lott, John},
   title={Some geometric properties of the Bakry-\'Emery-Ricci tensor},
   journal={Comment. Math. Helv.},
   volume={78},
   date={2003},
   number={4},
   pages={865--883},
   issn={0010-2571},
   review={\MR{2016700}},
   doi={10.1007/s00014-003-0775-8},
}

\bib{LV:Ric-mm}{article}{
   author={Lott, John},
   author={Villani, C\'edric},
   title={Ricci curvature for metric-measure spaces via optimal transport},
   journal={Ann. of Math. (2)},
   volume={169},
   date={2009},
   number={3},
   pages={903--991},
   issn={0003-486X},
   review={\MR{2480619}},
   doi={10.4007/annals.2009.169.903},
}

\bib{EMlm:role}{article}{
   author={Milman, Emanuel},
   title={On the role of convexity in isoperimetry, spectral gap and
   concentration},
   journal={Invent. Math.},
   volume={177},
   date={2009},
   number={1},
   pages={1--43},
   issn={0020-9910},
   doi={10.1007/s00222-009-0175-9},
}

\bib{EMlm:sharp}{article}{
   author={Milman, Emanuel},
   title={Sharp isoperimetric inequalities and model spaces for the
   curvature-dimension-diameter condition},
   journal={J. Eur. Math. Soc. (JEMS)},
   volume={17},
   date={2015},
   number={5},
   pages={1041--1078},
   issn={1435-9855},
   doi={10.4171/JEMS/526},
}

\bib{Mg:gmt}{book}{
   author={Morgan, Frank},
   title={Geometric measure theory},
   edition={5},
   note={A beginner's guide;
   Illustrated by James F. Bredt},
   publisher={Elsevier/Academic Press, Amsterdam},
   date={2016},
   pages={viii+263},
   isbn={978-0-12-804489-6},
}

\bib{Mg:density}{article}{
   author={Morgan, Frank},
   title={Manifolds with density},
   journal={Notices Amer. Math. Soc.},
   volume={52},
   date={2005},
   number={8},
   pages={853--858},
   issn={0002-9920},
}

\bib{MHH:isop-rev}{article}{
   author={Morgan, Frank},
   author={Hutchings, Michael},
   author={Howards, Hugh},
   title={The isoperimetric problem on surfaces of revolution of decreasing
   Gauss curvature},
   journal={Trans. Amer. Math. Soc.},
   volume={352},
   date={2000},
   number={11},
   pages={4889--4909},
   issn={0002-9947},
   doi={10.1090/S0002-9947-00-02482-X},
}

\bib{Nkj:1-meas}{article}{
author={Nakajima, Hiroki},
title={The maximum of the 1-measurement of a metric measure space},
note={preprint,  arXiv:1706.01258}
}

\bib{QT:isopara}{article}{
   author={Qian, Chao},
   author={Tang, Zizhou},
   title={Isoparametric functions on exotic spheres},
   journal={Adv. Math.},
   volume={272},
   date={2015},
   pages={611--629},
   issn={0001-8708},
   doi={10.1016/j.aim.2014.12.020},
}

\bib{Rt:isop-rev}{article}{
   author={Ritor\'e, Manuel},
   title={Constant geodesic curvature curves and isoperimetric domains in
   rotationally symmetric surfaces},
   journal={Comm. Anal. Geom.},
   volume={9},
   date={2001},
   number={5},
   pages={1093--1138},
   issn={1019-8385},
   review={\MR{1883725}},
   doi={10.4310/CAG.2001.v9.n5.a5},
}

\bib{Rt:conti}{article}{
   author={Ritor\'e, Manuel},
   title={Continuity of the isoperimetric profile of a complete Riemannian
   manifold under sectional curvature conditions},
   journal={Rev. Mat. Iberoam.},
   volume={33},
   date={2017},
   number={1},
   pages={239--250},
   issn={0213-2230},
   review={\MR{3615450}},
   doi={10.4171/RMI/935},
}

\bib{Sy:mmg}{book}{
   author={Shioya, Takashi},
   title={Metric measure geometry},
   series={IRMA Lectures in Mathematics and Theoretical Physics},
   volume={25},
   note={Gromov's theory of convergence and concentration of metrics and
   measures},
   publisher={EMS Publishing House, Z\"urich},
   date={2016},
   pages={xi+182},
   isbn={978-3-03719-158-3},
   doi={10.4171/158},
}

\bib{St:geomI}{article}{
   author={Sturm, Karl-Theodor},
   title={On the geometry of metric measure spaces. I},
   journal={Acta Math.},
   volume={196},
   date={2006},
   number={1},
   pages={65--131},
   issn={0001-5962},
   review={\MR{2237206}},
   doi={10.1007/s11511-006-0002-8},
}

\bib{St:geomII}{article}{
   author={Sturm, Karl-Theodor},
   title={On the geometry of metric measure spaces. II},
   journal={Acta Math.},
   volume={196},
   date={2006},
   number={1},
   pages={133--177},
   issn={0001-5962},
   review={\MR{2237207}},
   doi={10.1007/s11511-006-0003-7},
}
		
\bib{Vl:oldnew}{book}{
   author={Villani, C\'edric},
   title={Optimal transport},
   series={Grundlehren der Mathematischen Wissenschaften [Fundamental
   Principles of Mathematical Sciences]},
   volume={338},
   note={Old and new},
   publisher={Springer-Verlag, Berlin},
   date={2009},
   pages={xxii+973},
   isbn={978-3-540-71049-3},
   doi={10.1007/978-3-540-71050-9},
}

\bib{WW:comp}{article}{
   author={Wei, Guofang},
   author={Wylie, Will},
   title={Comparison geometry for the Bakry-Emery Ricci tensor},
   journal={J. Differential Geom.},
   volume={83},
   date={2009},
   number={2},
   pages={377--405},
   issn={0022-040X},
}

  \end{biblist}
\end{bibdiv}

\end{document}